\newif\ifpdf
\numberwithin{equation}{section} \swapnumbers
\newtheorem{satz}{Satz}[section]
\newtheorem{theorem}[satz]{Theorem}
\newtheorem{proposition}[satz]{Proposition}
\newtheorem{corollary}[satz]{Corollary}
\newtheorem{lemma}[satz]{Lemma}
\newtheorem{definition}[satz]{Definition}
\newtheorem{remark}[satz]{Remark}
\begin{document}
\hyphenation{pre-sent cor-res-pond one di-men-sio-nal}

\title[Tempered stable distributions and processes]{Tempered stable distributions and processes}
\author{Uwe K{\"u}chler \and Stefan Tappe}
\address{Humboldt Universit\"{a}t zu Berlin, Institut f\"{u}r Mathematik, Unter den Linden 6, D-10099 Berlin, Germany}
\email{kuechler@mathematik.hu-berlin.de}
\address{Leibniz Universit\"{a}t Hannover, Institut f\"{u}r Mathematische Stochastik, Welfengarten 1, D-30167 Hannover, Germany}
\email{tappe@stochastik.uni-hannover.de}
\begin{abstract}
We investigate the class of tempered stable distributions and their associated processes. Our analysis of tempered stable distributions includes limit distributions, parameter estimation and the study of their densities. Regarding tempered stable processes, we deal with density transformations and compute their $p$-variation indices. Exponential stock models driven by tempered stable processes are discussed as well.
\end{abstract}
\keywords{Tempered stable distributions and processes, limit distributions, parameter estimation, $p$-variation index}
\subjclass[2010]{60E07, 60G51}
\maketitle

\section{Introduction}

Tempered stable distributions form a class of distributions that have attracted the interest of researchers from probability theory as well as financial mathematics. They have first been introduced in \cite{Koponen}, where the associated L\'{e}vy processes are called ``truncated L\'{e}vy flights'', and have been generalized by several authors. Tempered stable distributions form a six parameter family of infinitely divisible distributions, which cover several well-known subclasses like Variance Gamma distributions \cite{Madan-1990, Madan}, bilateral Gamma distributions \cite{Kuechler-Tappe} and CGMY distributions \cite{CGMY}. Properties of tempered stable distributions have been investigated, e.g., in \cite{Rosinski, Zhang,  Sztonyk, Bianchi-3}.
For financial modeling they have been applied, e.g., in \cite{Cont-Tankov, Mercuri, Bianchi-1, Bianchi-2}, see also the recent textbook \cite{Bianchi-book}.

The goal of the present paper is to contribute to the theory of tempered stable distributions and processes.
In detail, we provide limit results for tempered stable distributions, deal with statistical issues and analyze their density functions as well as path properties.

Tempered stable distributions cover the class of bilateral Gamma distributions, an analytical tractable class which we have investigated in \cite{Kuechler-Tappe, Kuechler-Tappe-shapes, Kuechler-Tappe-pricing}. Our subsequent investigations will show that, in many respects, the properties of bilateral Gamma distributions differ from those of all other tempered stable distributions (for example the properties of their densities, see Section~\ref{sec-densities}, or their $p$-variation indices, see Section~\ref{sec-p-variation}) and that bilateral Gamma distributions can be regarded as boundary points within the class of tempered stable distributions. In this paper, we are in particular interested in the question, which relevant properties for bilateral Gamma distributions still hold true for general tempered stable distributions.

The remainder of this text is organized as follows: In Section~\ref{sec-TS} we review tempered stable distributions and collect some basic properties. In Section~\ref{sec-closure} we investigate closure properties of tempered stable distributions with respect to weak convergence. Afterwards, in Section~\ref{sec-normal} we show convergence of tempered stable distributions to normal distributions and provide a convergence rate. In Section~\ref{sec-lln} we prove ``law of large numbers'' results, with a view to parameter estimation from observation of a typical sample path, and in Section~\ref{sec-statistics-distributions} we perform statistics for a finite number of realizations of tempered stable distributions. In Section~\ref{sec-densities} we analyze the densities of tempered stable distributions. In Section~\ref{sec-density-transformations} we investigate locally equivalent measures under which a tempered stable process remains tempered stable, and in Section~\ref{sec-p-variation} we compute the $p$-variation index of tempered stable processes. Finally, in Section~\ref{sec-finance} we present an application from mathematical finance and treat stock models driven by tempered stable processes.

\section{Tempered stable distributions and processes}\label{sec-TS}

In this section, we introduce tempered stable distributions and processes and collect their relevant properties.

We call an infinitely divisible distribution $\eta$ on $(\mathbb{R},\mathcal{B}(\mathbb{R}))$ a \emph{one-sided tempered stable distribution}, denoted $\eta = {\rm TS}(\alpha,\beta,\lambda)$, with parameters $\alpha,\lambda \in (0,\infty)$ and $\beta \in [0,1)$ if its characteristic function is given by
\begin{align}\label{cf-intro-one-sided}
\varphi(z) = \exp \bigg( \int_{\mathbb{R}} \big( e^{izx} - 1 \big) F(dx) \bigg), \quad z \in \mathbb{R} 
\end{align}
where the L\'{e}vy measure $F$ is
\begin{align}\label{Levy-measure-one-sided}
F(dx) = \frac{\alpha}{x^{1 + \beta}} e^{-\lambda x}
\mathbbm{1}_{(0,\infty)}(x) dx.
\end{align}
We call the L\'{e}vy process associated to $\eta$ a \emph{tempered stable subordinator}.

Next, we fix parameters $\alpha^+,\lambda^+,\alpha^-,\lambda^- \in (0,\infty)$ and $\beta^+,\beta^- \in [0,1)$. An infinitely divisible distribution $\eta$ on $(\mathbb{R},\mathcal{B}(\mathbb{R}))$ is called a \emph{tempered stable distribution}, denoted
\begin{align*}
\eta = {\rm TS}(\alpha^+,\beta^+,\lambda^+;\alpha^-,\beta^-,\lambda^-),
\end{align*}
if $\eta = \eta^+ * \eta^-$, where $\eta^+ = {\rm TS}(\alpha^+,\beta^+,\lambda^+)$ and $\eta^- = \widetilde{\nu}$ with $\nu = {\rm TS}(\alpha^-,\beta^-,\lambda^-)$ and $\widetilde{\nu}$ denoting the dual of $\nu$ given by $\widetilde{\nu}(B) = \nu(-B)$ for $B \in \mathcal{B}(\mathbb{R})$.
Note that $\eta$ has the characteristic function (\ref{cf-intro-one-sided})
with the L\'{e}vy measure $F$ given by
\begin{align}\label{Levy-measure-TS}
F(dx) = \left( \frac{\alpha^+}{x^{1 + \beta^+}} e^{-\lambda^+ x}
\mathbbm{1}_{(0,\infty)}(x) + \frac{\alpha^-}{|x|^{1 + \beta^-}}
e^{-\lambda^- |x|} \mathbbm{1}_{(-\infty,0)}(x) \right)dx.
\end{align}
We call the L\'{e}vy process associated to $\eta$ a \emph{tempered stable process}.

\begin{remark}
In \cite[Sec.~4.5]{Cont-Tankov}, the authors define generalized tempered stable processes for parameters $\alpha^+,\lambda^+,\alpha^-,\lambda^- > 0$ and $\beta^+,\beta^- < 2$ as L\'{e}vy processes with characteristic function
\begin{align}\label{cf-TS-inf-var}
\varphi(z) = \exp \bigg( iz\gamma + \int_{\mathbb{R}} \big( e^{izx} - 1 - izx \big) F(dx) \bigg), \quad z \in \mathbb{R} 
\end{align}
for some constant $\gamma \in \mathbb{R}$ and L\'{e}vy measure $F$ given by (\ref{Levy-measure-TS}). In the case  $\beta^+ = \beta^-$ they call such a process a tempered stable process.
The behaviour of the sample paths of a generalized tempered stable process $X$ depends on the values of $\beta^+, \beta^-$:
\begin{itemize}
\item For $\beta^+,\beta^- < 0$ we have $F(\mathbb{R}) < \infty$, and hence, $X$ is a compound Poisson process and of type A in the terminology of \cite[Def.~11.9]{Sato}.

\item For $\beta^+, \beta^- \in [0,1)$, which is the situation that we consider in the present paper, we have $F(\mathbb{R}) = \infty$, but $\int_{-1}^1 |x| F(dx) < \infty$. Therefore, $X$ is a finite-variation process making infinitely many jumps in each interval of positive length, which we can express as $X_t = \sum_{s \leq t} \Delta X_s$, and it belongs to type B in the terminology of \cite[Def.~11.9]{Sato}. In particular, we can decompose $X$ as the difference of two independent one-sided tempered stable subordinators.

\item For $\beta^+, \beta^- \in [1,2)$ we have $\int_{-1}^1 |x| F(dx) = \infty$. Therefore, the tempered stable process $X$ has sample paths of infinite variation and belongs to type C in the terminology of \cite[Def.~11.9]{Sato}.
\end{itemize}
\end{remark}

\begin{remark}
The tempered stable distributions considered in this paper also correspond to the generalized tempered stable distributions in \cite{Bianchi-book}. The following particular cases are known in the literature:
\begin{itemize}
\item $\beta^+ = \beta^-$ is a KoBol distribution, see \cite{Boy};

\item $\alpha^+ = \alpha^-$ and $\beta^+ = \beta^-$ is a CGMY-distribution, see \cite{CGMY}, also called classical tempered stable distribution in \cite{Bianchi-book};

\item $\beta^+ = \beta^-$ and $\lambda^+ = \lambda^-$ is the infinitely divisible distribution associated to a truncated L\'{e}vy flight, see \cite{Koponen};

\item $\beta^+ = \beta^- = 0$ is a bilateral Gamma distribution, see \cite{Kuechler-Tappe};

\item $\alpha^+ = \alpha^-$ and $\beta^+ = \beta^- = 0$ is a Variance Gamma distribution, see \cite{Madan-1990, Madan}.
\end{itemize}
According to \cite[Prop.~4.1]{Cont-Tankov}, a tempered stable process $X$ can be represented as a time changed Brownian motion with drift if and only if $X$ is a CGMY-process. Accordingly, a bilateral Gamma process is a time changed Brownian motion if and only if it is a Variance Gamma process.
\end{remark}

\begin{remark}
In \cite{Bianchi-book}, tempered stable distributions are considered as one-dimensional infinitely divisible distributions with L\'{e}vy measure
\begin{align}\label{tempering-q-F}
F(dx) = q(x) F_{\rm stable}(dx),
\end{align}
where
\begin{align*}
F_{\rm stable}(dx) = \left( \frac{\alpha^+}{x^{1 + \beta}} 
\mathbbm{1}_{(0,\infty)}(x) + \frac{\alpha^-}{|x|^{1 + \beta}}
\mathbbm{1}_{(-\infty,0)}(x) \right)dx
\end{align*}
is the L\'{e}vy measure of a $\beta$-stable distribution and $q : \mathbb{R} \rightarrow \mathbb{R}_+$ is a tempering function. For example, with
\begin{align*}
q(x) = e^{-\lambda^+ x} \mathbbm{1}_{(0,\infty)}(x) + e^{-\lambda^- |x|} \mathbbm{1}_{(-\infty,0)}(x)
\end{align*}
and $\alpha^+ = \alpha^-$ we have a CGMY-distribution. Further examples
are the modified tempered stable distribution, the normal tempered stable distribution, the Kim-Rachev tempered stable distribution and the rapidly decreasing tempered stable distribution, see \cite[Chapter~3.2]{Bianchi-book}. Note that the L\'{e}vy measures of the tempered stable distributions considered in our paper are generally not of the form (\ref{tempering-q-F}). 
\end{remark}

\begin{remark}
One can also consider multi-dimensional tempered stable distributions. In \cite{Rosinski}, a distribution $\eta$ on $(\mathbb{R}^d,\mathcal{B}(\mathbb{R}^d))$ is called tempered $\alpha$-stable for $\alpha \in (0,2)$ if it is infinitely divisible without Gaussian part and L\'{e}vy measure $M$, which in polar coordinates is of the form
\begin{align*}
M(dr,du) = \frac{q(r,u)}{r^{\alpha + 1}} dr \sigma(du),
\end{align*}
where $\sigma$ is a finite measure on the unit sphere $\mathbb{S}^{d-1}$ and $q : (0,\infty) \times \mathbb{S}^{d-1} \rightarrow (0,\infty)$ is a Borel function satisfying certain assumptions.
\end{remark}

We shall now collect some basic properties of generalized tempered stable distributions which we require for this text. In the sequel, $\Gamma : \mathbb{R} \setminus \{ 0,-1,-2,\ldots \} \rightarrow \mathbb{R}$ denotes the Gamma function.

\begin{lemma}\label{lemma-cf-pre}
Suppose that $\beta \in (0,1)$. The one-sided tempered stable distribution
\begin{align*}
\eta = {\rm TS}(\alpha,\beta,\lambda)
\end{align*}
has the characteristic function
\begin{align}\label{cf-TS-one-sided}
\varphi(z) &= \exp \Big( \alpha \Gamma(-\beta) \big[ (\lambda - iz)^{\beta} - \lambda^{\beta} \big] \Big), \quad z \in \mathbb{R}
\end{align}
where the power stems from the main branch of the complex logarithm.
\end{lemma}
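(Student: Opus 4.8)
The plan is to compute the integral $\int_{\mathbb{R}}(e^{izx}-1)F(dx)$ with $F$ given by~(\ref{Levy-measure-one-sided}) directly, reducing it to a known Gamma-type integral. First I would write the exponent as
\begin{align*}
\int_0^\infty \big( e^{izx} - 1 \big) \frac{\alpha}{x^{1+\beta}} e^{-\lambda x}\, dx = \alpha \int_0^\infty \big( e^{-(\lambda - iz)x} - e^{-\lambda x} \big) x^{-1-\beta}\, dx,
\end{align*}
which converges near $0$ because $\beta \in (0,1)$ makes the difference $O(x^{-\beta})$, and near $\infty$ because of the exponential decay (note $\operatorname{Re}(\lambda - iz) = \lambda > 0$).

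The key identity is $\int_0^\infty (e^{-ax} - e^{-bx}) x^{-1-\beta}\, dx = \Gamma(-\beta)(a^\beta - b^\beta)$ for $\operatorname{Re} a, \operatorname{Re} b > 0$ and $\beta \in (0,1)$, where $a^\beta$ uses the principal branch. I would establish this in two stages. For real positive $a$, substitute $u = ax$ to get $a^\beta \int_0^\infty (e^{-u} - e^{-(b/a)u})u^{-1-\beta}\,du$; alternatively, integrate by parts once (differentiating the bracket, integrating $x^{-1-\beta}$ to $x^{-\beta}/(-\beta)$) to turn the integral into $\frac{1}{-\beta}\int_0^\infty x^{-\beta}(a e^{-ax} - b e^{-bx})\,dx = \frac{1}{-\beta}\big(a \cdot a^{\beta-1}\Gamma(1-\beta) - b \cdot b^{\beta-1}\Gamma(1-\beta)\big) = \frac{\Gamma(1-\beta)}{-\beta}(a^\beta - b^\beta) = \Gamma(-\beta)(a^\beta - b^\beta)$, using the standard formula $\int_0^\infty x^{s-1}e^{-ax}\,dx = a^{-s}\Gamma(s)$ for $\operatorname{Re} s, \operatorname{Re} a > 0$ with $s = 1-\beta$, and the recursion $\Gamma(1-\beta) = -\beta\,\Gamma(-\beta)$. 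Applying this with $a = \lambda - iz$ and $b = \lambda$ yields exactly $\alpha\Gamma(-\beta)\big[(\lambda-iz)^\beta - \lambda^\beta\big]$, which is~(\ref{cf-TS-one-sided}).

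The main obstacle is justifying the formula for complex $a = \lambda - iz$ rather than just real positive $a$, together with checking that the branch of $(\lambda - iz)^\beta$ obtained this way is the principal one. I would handle this by analytic continuation: fix $\beta$ and $\lambda$, and regard both sides as functions of $z$ in the strip (in fact the whole line suffices, but one can complexify $z$ in a neighborhood of $\mathbb{R}$ where $\operatorname{Re}(\lambda - iz) > 0$). The left-hand side is analytic in $z$ there by differentiation under the integral sign (dominated convergence, using the integrable bound coming from the $O(x^{-\beta})$ behavior at $0$ and exponential decay at $\infty$), the right-hand side is analytic since $w \mapsto w^\beta$ is holomorphic on the cut plane and $\lambda - iz$ stays in the right half-plane, and the two agree for $z = 0$ — more robustly, one can first verify the identity for $z$ purely imaginary, $z = i t$ with $t < \lambda$, where $\lambda - iz = \lambda + t$ is real positive and the elementary computation above applies verbatim — so they agree everywhere by the identity theorem. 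Since $\lambda - iz$ has positive real part, $(\lambda - iz)^\beta$ computed by continuation from the positive reals coincides with the principal branch, as claimed.
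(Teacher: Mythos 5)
Your proposal is correct, and it reaches the identity by a genuinely different computation than the paper, even though both arguments share the same outer frame (verify the formula on a set with an accumulation point, then invoke the identity theorem for analytic functions). The paper works on the disc $|z|<\lambda$: it expands $e^{izx}-1$ into its power series, integrates term by term against $x^{-1-\beta}e^{-\lambda x}\,dx$ to produce Gamma values $\Gamma(n-\beta)$, and resums the resulting series via the generalized binomial theorem into $(1-iz/\lambda)^{\beta}-1$. You instead evaluate the Frullani-type integral $\int_0^\infty (e^{-ax}-e^{-bx})x^{-1-\beta}\,dx=\Gamma(-\beta)(a^\beta-b^\beta)$ in closed form by a single integration by parts combined with $\int_0^\infty x^{-\beta}e^{-ax}\,dx=a^{\beta-1}\Gamma(1-\beta)$ and the recursion $\Gamma(1-\beta)=-\beta\,\Gamma(-\beta)$; this is arguably more elementary (no series manipulation or term-by-term integration to justify) and isolates the branch issue cleanly, at the cost of having to justify the Gamma integral for complex exponents, which you correctly defer to the continuation step. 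One small slip: when you verify the identity on the imaginary axis, $z=it$ gives $\lambda-iz=\lambda+t$, so the correct restriction is $t>-\lambda$ (ensuring both positivity of $\lambda+t$ and convergence of the integral at infinity), not $t<\lambda$; since any nondegenerate interval of such $t$ suffices for the identity theorem, this does not affect the validity of the argument.
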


\begin{proof}
Let $G \subset \mathbb{C}$ be the region $G = \{ z \in \mathbb{C} : {\rm Im} \, z > -\lambda \}$. We define the functions $f_i : G \rightarrow \mathbb{C}$ for $i=1,2$ as
\begin{align*}
f_1(z) := \int_{\mathbb{R}} \big( e^{izx} - 1 \big) F(dx) \quad \text{and} \quad f_2(z) := \alpha \Gamma(-\beta) \big[ (\lambda - iz)^{\beta} - \lambda^{\beta} \big].
\end{align*}
Then $f_1$ is analytic, which follows from \cite[Satz~IV.5.8]{Elstrodt}, and $f_2$ is analytic by the analyticity of the power function $z \mapsto z^{\beta}$ on the main branch of the complex logarithm. Let $B \subset G$ be the open ball $B = \{ z \in \mathbb{C} : |z| < \lambda \}$.
Using (\ref{Levy-measure-one-sided}) and Lebesgue's dominated convergence theorem, for all $z \in B$ we obtain
\begin{align*}
&f_1(z) = \int_{\mathbb{R}} \big( e^{izx} - 1 \big) F(dx) = \alpha \int_0^{\infty} \big( e^{izx} - 1 \big) \frac{e^{-\lambda x}}{x^{1 + \beta}} dx 
\\ &= \alpha \int_0^{\infty} \bigg( \sum_{n=1}^{\infty} \frac{(izx)^n}{n!} \bigg) \frac{e^{-\lambda x}}{x^{1 + \beta}} dx
= \alpha \sum_{n=1}^{\infty} \frac{(iz)^n}{n!} \int_0^{\infty} x^{n-\beta-1} e^{-\lambda x} dx 
\\ &= \alpha \sum_{n=1}^{\infty} \frac{(iz)^n}{n!} \int_0^{\infty} \bigg( \frac{x}{\lambda} \bigg)^{n-\beta-1} e^{-x} \frac{1}{\lambda} dx
= \alpha \sum_{n=1}^{\infty} \frac{(iz)^n}{n!} \lambda^{\beta - n} \Gamma(n-\beta) 
\\ &= \alpha \sum_{n=1}^{\infty} \frac{(iz)^n}{n!} \lambda^{\beta - n} \Gamma(-\beta) \prod_{k=0}^{n-1} (k-\beta)
= \alpha \Gamma(-\beta) \lambda^{\beta} \sum_{n=1}^{\infty} \bigg( \frac{iz}{\lambda} \bigg)^n (-1)^n \prod_{k=1}^n \frac{\beta - k + 1}{k} 
\\ &= \alpha \Gamma(-\beta) \lambda^{\beta} \sum_{n=1}^{\infty} \bigg( \genfrac{}{}{0pt}{}{\beta}{n} \bigg) \bigg( - \frac{iz}{\lambda} \bigg)^n
= \alpha \Gamma(-\beta) \lambda^{\beta} \bigg[ \bigg( 1 - \frac{iz}{\lambda} \bigg)^{\beta} - 1 \bigg] 
\\ &= \alpha \Gamma(-\beta) \big[ (\lambda - iz)^{\beta} - \lambda^{\beta} \big] = f_2(z).
\end{align*}
By the identity theorem for analytic functions we deduce that $f_1 \equiv f_2$ on $G$, which in particular yields that $f_1 \equiv f_2$ on $\mathbb{R}$. In view of (\ref{cf-intro-one-sided}), this proves (\ref{cf-TS-one-sided}).
\end{proof}

\begin{lemma}\label{lemma-cf}
Suppose that $\beta^+,\beta^- \in (0,1)$. The tempered stable distribution
\begin{align*}
\eta = {\rm TS}(\alpha^+,\beta^+,\lambda^+;\alpha^-,\beta^-,\lambda^-)
\end{align*}
has the characteristic function
\begin{equation}\label{cf-tempered-stable}
\begin{aligned}
\varphi(z) &= \exp \Big( \alpha^+ \Gamma(-\beta^+) \big[ (\lambda^+ - iz)^{\beta^+} - (\lambda^+)^{\beta^+} \big] 
\\ &\quad\quad\quad + \alpha^- \Gamma(-\beta^-) \big[ (\lambda^- + iz)^{\beta^-} - (\lambda^-)^{\beta^-} \big] \Big), \quad z \in \mathbb{R}
\end{aligned}
\end{equation}
where the powers stem from the main branch of the complex logarithm.
\end{lemma}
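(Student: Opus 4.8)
The plan is to obtain \eqref{cf-tempered-stable} directly from Lemma~\ref{lemma-cf-pre} and the defining convolution structure of $\eta$. Since $\eta = \eta^+ * \eta^-$, its characteristic function factors as $\varphi = \varphi_{\eta^+} \cdot \varphi_{\eta^-}$, so it suffices to identify the two factors. For $\eta^+ = {\rm TS}(\alpha^+,\beta^+,\lambda^+)$ the hypothesis $\beta^+ \in (0,1)$ lets us apply Lemma~\ref{lemma-cf-pre} verbatim, producing the first summand in the exponent of \eqref{cf-tempered-stable}.

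For the second factor I would use that $\eta^- = \widetilde{\nu}$ with $\nu = {\rm TS}(\alpha^-,\beta^-,\lambda^-)$, and that the dual distribution satisfies $\varphi_{\widetilde{\nu}}(z) = \varphi_{\nu}(-z)$ for all $z \in \mathbb{R}$ (if $Y \sim \nu$ then $-Y \sim \widetilde{\nu}$, whence $\mathbb{E}[e^{iz(-Y)}] = \mathbb{E}[e^{i(-z)Y}]$). Applying Lemma~\ref{lemma-cf-pre} to $\nu$ (using $\beta^- \in (0,1)$) and replacing $z$ by $-z$ yields
\[
\varphi_{\eta^-}(z) = \exp\Big( \alpha^- \Gamma(-\beta^-)\big[ (\lambda^- + iz)^{\beta^-} - (\lambda^-)^{\beta^-} \big] \Big), \quad z \in \mathbb{R},
\]
which is the second summand. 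Multiplying the two factors gives the claim. An alternative, essentially equivalent route would be to argue straight from the L\'evy--Khintchine form \eqref{cf-intro-one-sided} with the L\'evy measure \eqref{Levy-measure-TS}, splitting $\int_{\mathbb{R}} (e^{izx} - 1) F(dx)$ into the integrals over $(0,\infty)$ and $(-\infty,0)$ and running the computation of Lemma~\ref{lemma-cf-pre} on each piece (after the substitution $x \mapsto -x$ in the negative part).

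I do not expect a real obstacle here; the statement is a short corollary of Lemma~\ref{lemma-cf-pre}. The only point deserving a word of care is the branch of the complex power: for real $z$ the argument $\lambda^- + iz$ lies in the open right half-plane because $\lambda^- > 0$, so $(\lambda^- + iz)^{\beta^-}$ is unambiguously defined on the main branch and the substitution $z \mapsto -z$ applied to Lemma~\ref{lemma-cf-pre} is legitimate. Likewise $\lambda^+ - iz$ stays in the right half-plane, so the first summand is well defined, and the resulting formula holds for all $z \in \mathbb{R}$ as asserted.
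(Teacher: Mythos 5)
Your argument is correct and is precisely the route the paper intends: the paper's own proof consists of the single sentence that the lemma is an immediate consequence of Lemma~\ref{lemma-cf-pre}, and your factorization $\varphi = \varphi_{\eta^+}\cdot\varphi_{\eta^-}$ together with $\varphi_{\widetilde{\nu}}(z) = \varphi_{\nu}(-z)$ is exactly the omitted detail. Your remark on the branch of the complex power is a sensible point of care but not a gap.
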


\begin{proof}
This is an immediate consequence of Lemma~\ref{lemma-cf-pre}.
\end{proof}

According to equation (2.2) in \cite{Kuechler-Tappe}, the characteristic function of a bilateral Gamma distribution (i.e. $\beta^+ = \beta^- = 0$) is given by
\begin{align}\label{cf-Gamma}
\varphi(z) = \bigg( \frac{\lambda^+}{\lambda^+ - iz} \bigg)^{\alpha^+} \bigg( \frac{\lambda^-}{\lambda^- + iz} \bigg)^{\alpha^-}, \quad z \in \mathbb{R}
\end{align}
where the powers stem from the main branch of the complex logarithm.

Using Lemma~\ref{lemma-cf}, for $\beta^+,\beta^- \in (0,1)$ the cumulant generating function 
\begin{align*}
\Psi(z) = \ln \mathbb{E} [e^{zX}] \quad \text{(where $X \sim {\rm TS}(\alpha^+,\beta^+,\lambda^+;\alpha^-,\beta^-,\lambda^-)$)}
\end{align*}
exists on $[-\lambda^-,\lambda^+]$ and is given by
\begin{equation}\label{cumulant}
\begin{aligned}
\Psi(z) &= \alpha^+ \Gamma(-\beta^+) \big[ (\lambda^+ - z)^{\beta^+} -
(\lambda^+)^{\beta^+} \big]
\\ &\quad + \alpha^- \Gamma(-\beta^-)
\big[ (\lambda^- + z)^{\beta^-} - (\lambda^-)^{\beta^-} \big], \quad z \in
[-\lambda^-,\lambda^+].
\end{aligned}
\end{equation}
For a bilateral Gamma distribution (i.e. $\beta^+ = \beta^- = 0$), the cumulant generating function exists on $(-\lambda^-,\lambda^+)$ and is given by
\begin{align}\label{Psi-Gamma}
\Psi(z) = \alpha^+ \ln \left( \frac{\lambda^+}{\lambda^+ - z}
\right) + \alpha^- \ln \left( \frac{\lambda^-}{\lambda^- + z} \right),
\quad z \in (-\lambda^-, \lambda^+)
\end{align}
see \cite[Sec.~2]{Kuechler-Tappe}.
Hence, for all $\beta^+,\beta^- \in [0,1)$ the $n$-th order cumulant $\kappa_n = \frac{d^n}{d z^n} \Psi(z) |_{z=0}$ is given by
\begin{align}\label{cumulant-kappa-gamma}
\kappa_n = \Gamma(n - \beta^+) \frac{\alpha^+}{(\lambda^+)^{n -
\beta^+}} + (-1)^n \Gamma(n-\beta^-) \frac{\alpha^-}{(\lambda^-)^{n
- \beta^-}}, \quad n \in \mathbb{N} = \{ 1,2,\ldots \}.
\end{align}
In particular, for a random variable
\begin{align*}
X \sim {\rm TS}(\alpha^+,\beta^+,\lambda^+;\alpha^-,\beta^-,\lambda^-)
\end{align*}
we can specify:
\begin{itemize}
\item The expectation
\begin{align}\label{expectation-TS}
\mathbb{E}[X] = \kappa_1 = \Gamma(1 - \beta^+) \frac{\alpha^+}{(\lambda^+)^{1 - \beta^+}} - \Gamma(1 - \beta^-) \frac{\alpha^-}{(\lambda^-)^{1 - \beta^-}}.
\end{align}

\item The variance
\begin{align}\label{variance-TS}
{\rm Var}[X] = \kappa_2 = \Gamma(2 - \beta^+) \frac{\alpha^+}{(\lambda^+)^{2 - \beta^+}} + \Gamma(2 - \beta^-) \frac{\alpha^-}{(\lambda^-)^{2 - \beta^-}}.
\end{align}

\item The Charliers skewness
\begin{align}
\gamma_1(X) = \frac{\kappa_3}{\kappa_2^{3/2}} = \frac{\Gamma(3 - \beta^+) \frac{\alpha^+}{(\lambda^+)^{3 - \beta^+}} - \Gamma(3 - \beta^-) \frac{\alpha^-}{(\lambda^-)^{3 - \beta^-}}}{\left(
\Gamma(2 - \beta^+) \frac{\alpha^+}{(\lambda^+)^{2 - \beta^+}} + \Gamma(2 - \beta^-) \frac{\alpha^-}{(\lambda^-)^{2 - \beta^-}}
\right)^{3/2}}.
\end{align}

\item The kurtosis
\begin{align}\label{kurtosis}
\gamma_2(X) = 3 + \frac{\kappa_4}{\kappa_2^2} = 3 + \frac{\Gamma(4 - \beta^+) \frac{\alpha^+}{(\lambda^+)^{4 - \beta^+}} + \Gamma(4 - \beta^-) \frac{\alpha^-}{(\lambda^-)^{4 - \beta^-}}}{\left(
\Gamma(2 - \beta^+) \frac{\alpha^+}{(\lambda^+)^{2 - \beta^+}} + \Gamma(2 - \beta^-) \frac{\alpha^-}{(\lambda^-)^{2 - \beta^-}}
\right)^2}.
\end{align}
\end{itemize}

\begin{remark}
Let $\eta = {\rm TS}(\alpha,\beta,\lambda)$ be a one-sided tempered stable distribution. For $\beta \in (0,1)$ the characteristic function is given by (\ref{cf-TS-one-sided}), see Lemma~\ref{lemma-cf-pre},
and hence, the cumulant generating function exists on $(-\infty,\lambda]$ and is given by
\begin{align}\label{cumulant-TS-one-sided}
\Psi(z) = \alpha \Gamma(-\beta) \big[ (\lambda - z)^{\beta} - \lambda^{\beta} \big], \quad z \in (-\infty,\lambda].
\end{align}
For $\beta = 0$ the tempered stable distribution is a Gamma distribution $\eta = \Gamma(\alpha,\lambda)$. Therefore, we have the characteristic function
\begin{align}\label{cf-Gamma-one-sided}
\varphi(z) = \bigg( \frac{\lambda}{\lambda - iz} \bigg)^{\alpha}, \quad z \in \mathbb{R}
\end{align}
and the cumulant generating function exists on $(-\infty,\lambda)$ and is given by
\begin{align}\label{cumulent-Gamma-one-sided}
\Psi(z) = \alpha \ln \bigg( \frac{\lambda}{\lambda - z} \bigg), \quad z \in (-\infty,\lambda).
\end{align}
For $\beta \in [0,1)$ and $X \sim {\rm TS}(\alpha,\beta,\lambda)$ we obtain 
the cumulants
\begin{align}\label{cumulant-one-sided}
\kappa_n = \Gamma(n-\beta) \frac{\alpha}{\lambda^{n-\beta}}, \quad n \in \mathbb{N},
\end{align}
the expectation and the variance
\begin{align}\label{E-TS-one-sided}
\mathbb{E}[X] &= \Gamma(1 - \beta) \frac{\alpha}{\lambda^{1 - \beta}},
\\ \label{Var-TS-one-sided} {\rm Var}[X] &= \Gamma(2 - \beta) \frac{\alpha}{\lambda^{2 - \beta}}.
\end{align}
\end{remark}

\begin{remark}\label{remark-cumulant-inf-var}
The characteristic function (\ref{cf-TS-inf-var}) of the tempered stable distribution
\begin{align*}
\eta = {\rm TS}(\alpha^+,\beta^+,\lambda^+;\alpha^-,\beta^-,\lambda^-)
\end{align*}
with $\beta^+,\beta^- < 2$ is given by
\begin{align*}
\varphi(z) &= \exp \Big( iz\gamma + \alpha^+ \Gamma(-\beta^+) \big[ (\lambda^+ - iz)^{\beta^+} - (\lambda^+)^{\beta^+} + iz \beta^+ (\lambda^+)^{\beta^+ - 1} \big] 
\\ &\quad\quad\quad + \alpha^- \Gamma(-\beta^-) \big[ (\lambda^- + iz)^{\beta^-} - (\lambda^-)^{\beta^-} - iz \beta^- (\lambda^-)^{\beta^- - 1} \big] \Big), \quad z \in \mathbb{R}.
\end{align*}
Therefore, the cumulant generating function exists on $[-\lambda^-,\lambda^+]$ and is given by
\begin{align*}
\Psi(z) &= \gamma z + \alpha^+ \Gamma(-\beta^+) \big[ (\lambda^+ - z)^{\beta^+} -
(\lambda^+)^{\beta^+} + \beta^+ (\lambda^+)^{\beta^+ - 1} z \big]
\\ &\quad + \alpha^- \Gamma(-\beta^-)
\big[ (\lambda^- + z)^{\beta^-} - (\lambda^-)^{\beta^-} - \beta^- (\lambda^-)^{\beta^- - 1} z \big], \quad z \in
[-\lambda^-,\lambda^+].
\end{align*}
Hence, we have $\kappa_1 = \gamma$ and the cumulants $\kappa_n$ for $n \geq 2$ are given by (\ref{cumulant-kappa-gamma}).
\end{remark}

For a tempered stable process $X$ we shall also write 
\begin{align*}
X \sim {\rm TS}(\alpha^+,\beta^+,\lambda^+;\alpha^-,\beta^-,\lambda^-).
\end{align*}
Note that we can decompose $X = X^+ - X^-$ as the difference of two independent one-sided tempered stable subordinators $X^+ \sim {\rm TS}(\alpha^+,\beta^+,\lambda^+)$ and $X^- \sim {\rm TS}(\alpha^-,\beta^-,\lambda^-)$. In view of the characteristic function (\ref{cf-tempered-stable}) calculated in Lemma~\ref{lemma-cf}, all increments of $X$ have a tempered stable distribution, more precisely
\begin{align}\label{distribution-TS-time}
X_t - X_s \sim {\rm TS}(\alpha^+ (t-s), \beta^+, \lambda^+; \alpha^- (t-s), \beta^-,
\lambda^-) \quad \text{for $0 \leq s < t$.}
\end{align}
In particular, for any constant $\Delta > 0$ the process $X_{\Delta \bullet} = (X_{\Delta t})_{t \geq 0}$ is a tempered stable process
\begin{align}\label{X-Delta-bullet}
X_{\Delta \bullet} \sim {\rm TS}(\Delta \alpha^+,\beta^+,\lambda^+;\Delta \alpha^-,\beta^-,\lambda^-).
\end{align}

\section{Closure properties of tempered stable distributions}\label{sec-closure}

In this section, we shall investigate limit distributions of sequences of tempered stable distributions.

\begin{proposition}\label{prop-closure}
Let sequences
\begin{align*}
(\alpha_n^+,\beta_n^+,\lambda_n^+;\alpha_n^-,\beta_n^-,\lambda_n^-)_{n \in \mathbb{N}} \subset ( (0,\infty) \times [0,1) \times (0,\infty) )^2
\end{align*}
and real numbers
\begin{align*}
(\alpha^+,\beta^+,\lambda^+;\alpha^-,\beta^-,\lambda^-) \in ( (0,\infty) \times [0,1) \times (0,\infty) )^2
\end{align*}
be given. Then, the following statements are valid:
\begin{enumerate}
\item If we have
\begin{align*}
(\alpha_n^+,\beta_n^+,\lambda_n^+;\alpha_n^-,\beta_n^-,\lambda_n^-) \rightarrow (\alpha^+,\beta^+,\lambda^+;\alpha^-,\beta^-,\lambda^-),
\end{align*}
then we have the weak convergence
\begin{align}\label{weak-cont}
{\rm TS}(\alpha_n^+,\beta_n^+,\lambda_n^+;\alpha_n^-,\beta_n^-,\lambda_n^-) \overset{w}{\rightarrow} {\rm TS}(\alpha^+,\beta^+,\lambda^+;\alpha^-,\beta^-,\lambda^-).
\end{align}

\item If we have $\alpha_n^- \rightarrow 0$ and
\begin{align*}
(\alpha_n^+,\beta_n^+,\lambda_n^+;\beta_n^-,\lambda_n^-) \rightarrow (\alpha^+,\beta^+,\lambda^+;\beta^-,\lambda^-),
\end{align*}
then we have the weak convergence
\begin{align}\label{weak-Dirac-1}
{\rm TS}(\alpha_n^+,\beta_n^+,\lambda_n^+;\alpha_n^-,\beta_n^-,\lambda_n^-) \overset{w}{\rightarrow} {\rm TS}(\alpha^+,\beta^+,\lambda^+).
\end{align}

\item If we have $\alpha_n^+,\alpha_n^- \rightarrow 0$ and
\begin{align*}
(\beta_n^+,\lambda_n^+;\beta_n^-,\lambda_n^-) \rightarrow (\beta^+,\lambda^+;\beta^-,\lambda^-),
\end{align*}
then we have the weak convergence
\begin{align}\label{weak-Dirac-2}
{\rm TS}(\alpha_n^+,\beta_n^+,\lambda_n^+;\alpha_n^-,\beta_n^-,\lambda_n^-) \overset{w}{\rightarrow} \delta_0.
\end{align}

\item If we have $\alpha_n^+,\alpha_n^-,\lambda_n^+,\lambda_n^- \rightarrow \infty$ and there are $\mu^+,\mu^- \geq 0$ such that
\begin{align}\label{LLN-cond}
\Gamma(1-\beta^+) \frac{\alpha_n^+}{(\lambda_n^+)^{1-\beta^+}} \rightarrow \mu^+ \quad \text{and} \quad \Gamma(1-\beta^-) \frac{\alpha_n^-}{(\lambda_n^-)^{1-\beta^-}} \rightarrow \mu^-,
\end{align}
then we have the weak convergence
\begin{align}\label{weak-LLN}
{\rm TS}(\alpha_n^+,\beta^+,\lambda_n^+;\alpha_n^-,\beta^-,\lambda_n^-) \overset{w}{\rightarrow} \delta_{\mu},
\end{align}
where the number $\mu \in \mathbb{R}$ is the limit of the means 
\begin{align*}
\mu = \mu^+ - \mu^-.
\end{align*}

\item If we have $\alpha_n^+,\alpha_n^-,\lambda_n^+,\lambda_n^- \rightarrow \infty$ and there are $\mu \in \mathbb{R}$, $(\sigma^+)^2, (\sigma^-)^2 > 0$ such that
\begin{align}\label{CLT-cond-1}
&\Gamma(1-\beta^+) \frac{\alpha_n^+}{(\lambda_n^+)^{1-\beta^+}} - \Gamma(1-\beta^-) \frac{\alpha_n^-}{(\lambda_n^-)^{1-\beta^-}} \rightarrow \mu,
\\ \label{CLT-cond-2} &\Gamma(2-\beta^+) \frac{\alpha_n^+}{(\lambda_n^+)^{2-\beta^+}} \rightarrow (\sigma^+)^2  \quad \text{and} \quad \Gamma(2-\beta^-) \frac{\alpha_n^-}{(\lambda_n^-)^{2-\beta^-}} \rightarrow (\sigma^-)^2, 
\end{align}
then we have the weak convergence
\begin{align}\label{weak-normal}
{\rm TS}(\alpha_n^+,\beta^+,\lambda_n^+;\alpha_n^-,\beta^-,\lambda_n^-) \overset{w}{\rightarrow} N(\mu,\sigma^2),
\end{align}
where the variance $\sigma^2 > 0$ is given by
\begin{align*}
\sigma^2 = (\sigma^+)^2 + (\sigma^-)^2.
\end{align*}
\end{enumerate}
\end{proposition}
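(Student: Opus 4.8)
The plan is to derive all five statements from convergence of the characteristic functions, via L\'{e}vy's continuity theorem. For parameters in $((0,\infty)\times[0,1)\times(0,\infty))^2$ I write
\[
\psi_\beta(z;\alpha,\lambda) := \int_0^\infty \big( e^{izx} - 1 \big)\frac{\alpha}{x^{1+\beta}}\,e^{-\lambda x}\,dx, \qquad z \in \mathbb{R};
\]
by (\ref{cf-intro-one-sided}) and (\ref{Levy-measure-TS}) the characteristic function of ${\rm TS}(\alpha^+,\beta^+,\lambda^+;\alpha^-,\beta^-,\lambda^-)$ equals $\varphi = \exp\Lambda$ with $\Lambda(z) = \psi_{\beta^+}(z;\alpha^+,\lambda^+) + \psi_{\beta^-}(-z;\alpha^-,\lambda^-)$, and I write $\varphi_n = \exp\Lambda_n$ for the members of the sequence. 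In each of the five cases it then suffices to show that $\Lambda_n(z)$ converges, for every fixed $z \in \mathbb{R}$, to the characteristic exponent of the claimed limit law, since tempered stable laws, $\delta_0$, $\delta_\mu$ and $N(\mu,\sigma^2)$ all have characteristic functions continuous at the origin.

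For the first three statements I would rely on a domination uniform in $n$. If $\alpha_n \to \alpha$, $\beta_n \to \beta$, $\lambda_n \to \lambda$ with $\alpha,\lambda \in (0,\infty)$ and $\beta \in [0,1)$, then for $n$ large $\alpha_n \le \alpha+1$, $\beta_n \le B := (1+\beta)/2 < 1$ and $\lambda_n \ge L := \lambda/2 > 0$, so, since $|e^{izx}-1| \le \min(|z|x,2)$, the integrand defining $\psi_{\beta_n}(z;\alpha_n,\lambda_n)$ is dominated by the $n$-independent integrable function $(\alpha+1)|z|\,x^{-B}\mathbbm{1}_{(0,1]}(x) + 2(\alpha+1)e^{-Lx}\mathbbm{1}_{(1,\infty)}(x)$. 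Lebesgue's dominated convergence theorem then gives $\psi_{\beta_n}(z;\alpha_n,\lambda_n) \to \psi_\beta(z;\alpha,\lambda)$, which proves (1). The same bound shows $|\psi_{\beta_n^-}(-z;\alpha_n^-,\lambda_n^-)| \le \alpha_n^-\,C(z)$, with $C(z)$ depending only on $z$ and on bounds for the convergent sequences $\beta_n^-$, $\lambda_n^-$; hence under the hypothesis of (2) this summand tends to $0$ while the $+$-summand converges as in (1), yielding the characteristic function of ${\rm TS}(\alpha^+,\beta^+,\lambda^+)$, and under the hypothesis of (3) both summands tend to $0$, so $\varphi_n \to 1$.

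For (4) and (5) the $\beta^\pm$ are fixed and $\lambda_n^\pm \to \infty$. Expanding $e^{izx}-1$ in the integral as in the proof of Lemma~\ref{lemma-cf-pre} (the same computation applies when $\beta = 0$) gives, for $|z| < \lambda$, the cumulant series $\psi_\beta(z;\alpha,\lambda) = \sum_{n \ge 1} \Gamma(n-\beta)\frac{\alpha}{\lambda^{n-\beta}}\frac{(iz)^n}{n!}$; isolating $n = 1,2$ and using $\Gamma(n-\beta) \le (n-1)!$ for $n \ge 2$ to bound the remaining sum, one obtains, whenever $\lambda \ge 2|z|$,
\[
\psi_\beta(z;\alpha,\lambda) = iz\,\Gamma(1-\beta)\frac{\alpha}{\lambda^{1-\beta}} \;-\; \frac{z^2}{2}\,\Gamma(2-\beta)\frac{\alpha}{\lambda^{2-\beta}} \;+\; R, \qquad |R| \le C_0\,|z|^3\,\frac{\alpha}{\lambda^{3-\beta}},
\]
with $C_0$ an absolute constant. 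Substituting this, and its analogue with $z$ replaced by $-z$, into $\Lambda_n(z)$ and using $\frac{\alpha_n^\pm}{(\lambda_n^\pm)^{k-\beta^\pm}} = (\lambda_n^\pm)^{1-k}\frac{\alpha_n^\pm}{(\lambda_n^\pm)^{1-\beta^\pm}}$: in case (4), by (\ref{LLN-cond}) the quantities $\frac{\alpha_n^\pm}{(\lambda_n^\pm)^{1-\beta^\pm}}$ are bounded, the $n=1$ contributions converge to $iz\mu^+$ and $-iz\mu^-$, and the $n=2$ contributions and $R$ vanish because $\lambda_n^\pm \to \infty$, so $\Lambda_n(z) \to iz(\mu^+-\mu^-) = iz\mu$ and $\varphi_n \to e^{iz\mu}$; in case (5) the two $n=1$ contributions are individually unbounded --- indeed $\frac{\alpha_n^\pm}{(\lambda_n^\pm)^{1-\beta^\pm}} = \lambda_n^\pm\frac{\alpha_n^\pm}{(\lambda_n^\pm)^{2-\beta^\pm}} \to \infty$ by (\ref{CLT-cond-2}) --- but their sum equals $iz$ times the expression in (\ref{CLT-cond-1}) and hence tends to $iz\mu$, the two $n=2$ contributions sum to $-\frac{z^2}{2}$ times the sum of the two quantities in (\ref{CLT-cond-2}) and hence tend to $-\frac{z^2}{2}((\sigma^+)^2+(\sigma^-)^2) = -\frac{z^2}{2}\sigma^2$, and $R = O\big((\lambda_n^\pm)^{-1}\frac{\alpha_n^\pm}{(\lambda_n^\pm)^{2-\beta^\pm}}\big) \to 0$ since the last factor is bounded by (\ref{CLT-cond-2}); thus $\varphi_n(z) \to \exp(iz\mu - \tfrac12\sigma^2 z^2)$.

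The one point that will require genuine care is (5): because the first-order contributions of the two one-sided parts diverge, one must not pass to the limit in the two summands of $\Lambda_n$ separately but add them first, so that the divergent parts cancel by virtue of (\ref{CLT-cond-1}), while at the same time the cumulant-series remainder $R$ has to be kept uniformly negligible --- which is exactly what $\lambda_n^\pm \to \infty$ together with (\ref{CLT-cond-2}) secures. Everything else comes down to the uniform domination used for (1)--(3) and to elementary estimates; in particular, the different closed forms of $\psi_\beta$ for $\beta = 0$ and for $\beta \in (0,1)$ are immaterial, since both the integral representation and the cumulant series handle the two cases simultaneously.
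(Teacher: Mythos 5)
Your argument is correct. For parts (4) and (5) it coincides with the paper's proof: both expand the characteristic exponent as the cumulant series $\sum_{j \geq 1} \kappa_j^n (iu)^j / j!$, isolate the first one or two terms, and kill the tail using $\Gamma(j-\beta) \leq (j-1)!$ together with the geometric series; the one essential point in (5) is, as you note, that the two divergent first-order contributions must be combined via (\ref{CLT-cond-1}) before passing to the limit, while (\ref{CLT-cond-2}) and $\lambda_n^{\pm} \rightarrow \infty$ make the remainder negligible. For parts (1)--(3), however, you take a genuinely different route: you apply dominated convergence directly to the L\'{e}vy--Khintchine integral $\int_0^{\infty} (e^{izx}-1) \alpha_n x^{-1-\beta_n} e^{-\lambda_n x}\, dx$, with a dominating function independent of $n$ built from $|e^{izx}-1| \leq \min(|z|x,2)$ and the eventual bounds $\beta_n \leq (1+\beta)/2 < 1$ and $\lambda_n \geq \lambda/2$. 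The paper instead invokes the closed-form exponents (\ref{cf-tempered-stable}), (\ref{cf-Gamma}), (\ref{cf-TS-one-sided}), (\ref{cf-Gamma-one-sided}) and checks their continuity in the parameters, the delicate point being the boundary $\beta = 0$, which it handles with the l'H\^{o}pital computation $\Gamma(-\beta)\big[(\lambda-iz)^{\beta} - \lambda^{\beta}\big] \rightarrow \ln(\lambda/(\lambda-iz))$. Your version buys uniformity: it treats $\beta = 0$ and $\beta \in (0,1)$ in one stroke and needs no case distinction or special limit at the boundary, at the price of not exhibiting the limit exponent in closed form (which the integral representation (\ref{cf-intro-one-sided}) supplies anyway). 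Both arguments are complete once L\'{e}vy's continuity theorem is invoked, as you do.
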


\begin{proof}
In order to prove the result, it suffices to show that the respective characteristic functions converge. Noting that, by l'H\^{o}pital's rule, for all $\lambda > 0$ and $z \in \mathbb{R}$ we have
\begin{align*}
&\lim_{\beta \downarrow 0} \Gamma(-\beta) \big[ (\lambda - iz)^{\beta} - \lambda^{\beta} \big] = \lim_{\beta \downarrow 0} \Gamma(-\beta) \lambda^{\beta} \bigg[ \bigg( \frac{\lambda - iz}{\lambda} \bigg)^{\beta} - 1 \bigg] 
\\ &= - \lim_{\beta \downarrow 0} \Gamma(1-\beta) \lambda^{\beta} \frac{\big(\frac{\lambda - iz}{\lambda}\big)^{\beta} - 1}{\beta} = - \lim_{\beta \downarrow 0} \frac{\frac{d}{d \beta} \big( \big(\frac{\lambda - iz}{\lambda}\big)^{\beta} - 1 \big)}{\frac{d}{d \beta} \beta}
\\ &= - \lim_{\beta \downarrow 0} \bigg( \frac{\lambda - iz}{\lambda} \bigg)^{\beta} \ln \bigg( \frac{\lambda - iz}{\lambda} \bigg) = \ln \bigg( \frac{\lambda}{\lambda - iz} \bigg),
\end{align*}
relations (\ref{weak-cont})--(\ref{weak-Dirac-2}) follow by taking into account the representations (\ref{cf-tempered-stable}), (\ref{cf-Gamma}) and (\ref{cf-TS-one-sided}), (\ref{cf-Gamma-one-sided}) of the characteristic functions.

In the sequel, for $n \in \mathbb{N}$ we denote by $\varphi_n : \mathbb{R} \rightarrow \mathbb{C}$ the characteristic function of the tempered stable distribution
\begin{align*}
{\rm TS}(\alpha_n^+,\beta^+,\lambda_n^+;\alpha_n^-,\beta^-,\lambda_n^-)
\end{align*}
and by $(\kappa_j^n)_{j \in \mathbb{N}}$ its cumulants. Then we have
\begin{align*}
\varphi_n(u) = \exp \bigg( \sum_{j=1}^{\infty} \frac{(iu)^j}{j!} \kappa_j^n \bigg), \quad u \in \mathbb{R}.
\end{align*}
Suppose that (\ref{LLN-cond}) is satisfied. Using the estimates
\begin{align}\label{est-Gamma-faculty}
\Gamma(j-\beta^+) \leq (j-1)! \quad \text{and} \quad \Gamma(j-\beta^-) \leq (j-1)! \quad \text{for $j \geq 2$,}
\end{align}
by the geometric series and (\ref{LLN-cond}), for all $u \in \mathbb{R}$ we obtain
\begin{align*}
&\sum_{j=2}^{\infty} \bigg| \frac{(iu)^j}{j!} \bigg(  \Gamma(j - \beta^+) \frac{\alpha_n^+}{(\lambda_n^+)^{j -
\beta^+}} + (-1)^j \Gamma(j-\beta^-) \frac{\alpha_n^-}{(\lambda_n^-)^{j
- \beta^-}} \bigg) \bigg| 
\\ &\leq \frac{\alpha_n^+}{(\lambda_n^+)^{1 -
\beta^+}} |u| \sum_{j=2}^{\infty} \bigg( \frac{|u|}{\lambda_n^+} \bigg)^{j-1} + \frac{\alpha_n^-}{(\lambda_n^-)^{1 -
\beta^-}} |u| \sum_{j=2}^{\infty} \bigg( \frac{|u|}{\lambda_n^-} \bigg)^{j-1}
\\ &= \frac{\alpha_n^+}{(\lambda_n^+)^{1 -
\beta^+}} \frac{|u|^2}{\lambda_n^+ - |u|} + \frac{\alpha_n^-}{(\lambda_n^-)^{1 - \beta^-}} \frac{|u|^2}{\lambda_n^- - |u|} \rightarrow 0 \quad \text{as $n \rightarrow \infty$,}
\end{align*}
and hence, by taking into account (\ref{cumulant-kappa-gamma}) and (\ref{LLN-cond}), we have
\begin{align*}
\varphi_n(u) \rightarrow e^{iu\mu}, \quad u \in \mathbb{R}
\end{align*}
proving (\ref{weak-LLN}). Now, suppose that (\ref{CLT-cond-1}), (\ref{CLT-cond-2}) are satisfied. Using the estimates (\ref{est-Gamma-faculty}),
by the geometric series and (\ref{CLT-cond-2}), for all $u \in \mathbb{R}$ we obtain
\begin{align*}
&\sum_{j=3}^{\infty} \bigg| \frac{(iu)^j}{j!} \bigg(  \Gamma(j - \beta^+) \frac{\alpha_n^+}{(\lambda_n^+)^{j -
\beta^+}} + (-1)^j \Gamma(j-\beta^-) \frac{\alpha_n^-}{(\lambda_n^-)^{j
- \beta^-}} \bigg) \bigg| 
\\ &\leq \frac{\alpha_n^+}{(\lambda_n^+)^{2 -
\beta^+}} |u|^2 \sum_{j=3}^{\infty} \bigg( \frac{|u|}{\lambda_n^+} \bigg)^{j-2} + \frac{\alpha_n^-}{(\lambda_n^-)^{2 -
\beta^-}} |u|^2 \sum_{j=3}^{\infty} \bigg( \frac{|u|}{\lambda_n^-} \bigg)^{j-2}
\\ &= \frac{\alpha_n^+}{(\lambda_n^+)^{2 -
\beta^+}} \frac{|u|^3}{\lambda_n^+ - |u|} + \frac{\alpha_n^-}{(\lambda_n^-)^{2 - \beta^-}} \frac{|u|^3}{\lambda_n^- - |u|} \rightarrow 0 \quad \text{as $n \rightarrow \infty$,}
\end{align*}
and hence, by taking into account (\ref{cumulant-kappa-gamma}) and (\ref{CLT-cond-1}), (\ref{CLT-cond-2}), we have
\begin{align*}
\varphi_n(u) \rightarrow e^{iu\mu - u^2 \sigma^2 / 2}, \quad u \in \mathbb{R}
\end{align*}
proving (\ref{weak-normal}).
\end{proof}

By (\ref{weak-cont}), the class of tempered stable distributions is closed under weak convergence on its domain
\begin{align*}
( (0,\infty) \times [0,1) \times (0,\infty) )^2.
\end{align*}
Note that bilateral Gamma distributions (corresponding to $\beta^+ = 0$ and $\beta^- = 0$) belong to this domain and are contained in its boundary.
For $\alpha^+ \rightarrow 0$ or $\alpha^- \rightarrow 0$ we obtain one-sided tempered stable distributions and Dirac measures as boundary distributions, see (\ref{weak-Dirac-1}) and (\ref{weak-Dirac-2}). If $\alpha^+,\alpha^-,\lambda^+,\lambda^- \rightarrow \infty$ for fixed valued of $\beta^+,\beta^-$, in certain situations we obtain a Dirac measure, see (\ref{weak-LLN}), or to a normal distribution, see (\ref{weak-normal}), as limit distribution.

\section{Convergence of tempered stable distributions to a normal distribution}\label{sec-normal}

In \cite[Sec.~3]{Rosinski} the long time behaviour of tempered stable processes was studied and convergence to a Brownian motion was established. Here, we provide a convergence rate and show, how close a given tempered stable distribution (or tempered stable process) is to a normal distribution (or Brownian motion).

\begin{lemma}\label{lemma-TS-consistent}
The following statements are valid:
\begin{enumerate}
\item Suppose $X_i \sim {\rm TS}(\alpha_i^+,\beta^+,\lambda^+;\alpha_i^-,\beta^-,\lambda^-)$, $i=1,2$ are independent. Then we have
\begin{align}\label{TS-sum}
X_1 + X_2 \sim {\rm TS}(\alpha_1^+ + \alpha_2^+,\beta^+,\lambda^+;\alpha_1^- + \alpha_2^-,\beta^-,\lambda^-).
\end{align}
\item For $X \sim {\rm TS}(\alpha^+,\beta^+,\lambda^+;\alpha^-,\beta^-,\lambda^-)$ and a constant $\rho > 0$ we have
\begin{align}\label{TS-scalar}
\rho X \sim {\rm TS}(\alpha^+ {\rho}^{\beta^+},\beta^+,\lambda^+ / \rho;\alpha^- {\rho}^{\beta^-},\beta^-,\lambda^- / \rho).
\end{align}
\end{enumerate}
\end{lemma}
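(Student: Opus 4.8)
The plan is to verify both statements by computing characteristic functions and invoking uniqueness of the characteristic function for infinitely divisible laws. For part (1), since $X_1$ and $X_2$ are independent, the characteristic function of $X_1 + X_2$ is the product $\varphi_{X_1}(z)\varphi_{X_2}(z)$. Using the representation of the characteristic function as $\exp(\int_{\mathbb{R}}(e^{izx}-1)F(dx))$ from (\ref{cf-intro-one-sided}) with L\'{e}vy measure (\ref{Levy-measure-TS}), the product of two such exponentials has exponent $\int_{\mathbb{R}}(e^{izx}-1)(F_1+F_2)(dx)$, and $F_1 + F_2$ is exactly the L\'{e}vy measure of the form (\ref{Levy-measure-TS}) with parameters $\alpha_1^+ + \alpha_2^+$, $\lambda^+$, $\beta^+$ on the positive half-line and $\alpha_1^- + \alpha_2^-$, $\lambda^-$, $\beta^-$ on the negative half-line (the $\beta$'s and $\lambda$'s being shared, only the $\alpha$'s add). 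This identifies the law of $X_1 + X_2$ as the asserted tempered stable distribution. Alternatively, one can work directly with the closed forms (\ref{cf-tempered-stable}) when $\beta^\pm \in (0,1)$ and (\ref{cf-Gamma}) when $\beta^\pm = 0$, where the additivity in $\alpha^\pm$ is immediate, but the L\'{e}vy-measure argument has the advantage of covering all of $\beta^\pm \in [0,1)$ uniformly.

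For part (2), the characteristic function of $\rho X$ is $z \mapsto \varphi_X(\rho z)$. Again working at the level of the L\'{e}vy measure, $\rho X$ has L\'{e}vy measure equal to the pushforward of $F$ under $x \mapsto \rho x$; on the positive half-line this pushforward is $\frac{\alpha^+}{(\,y/\rho\,)^{1+\beta^+}}e^{-\lambda^+ y/\rho}\,\frac{1}{\rho}\,dy = \frac{\alpha^+ \rho^{\beta^+}}{y^{1+\beta^+}}e^{-(\lambda^+/\rho)y}\,dy$ after substituting $y = \rho x$, which is precisely the positive part of (\ref{Levy-measure-TS}) with parameters $\alpha^+\rho^{\beta^+}$, $\lambda^+/\rho$, $\beta^+$; the negative half-line is symmetric with $\rho^{\beta^-}$, $\lambda^-/\rho$, $\beta^-$. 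This gives (\ref{TS-scalar}). The decomposition $X = X^+ - X^-$ into independent one-sided subordinators lets one check the two halves separately if desired.

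I do not anticipate a serious obstacle here; both claims are essentially bookkeeping. The only point requiring a little care is that the closed-form characteristic functions (\ref{cf-tempered-stable})--(\ref{cf-Gamma}) distinguish the cases $\beta^\pm \in (0,1)$ and $\beta^\pm = 0$, so a proof via those formulas would need to treat the mixed cases ($\beta^+ = 0$, $\beta^- \in (0,1)$, etc.) separately; the cleanest route is therefore to argue at the level of the L\'{e}vy triplet (here just the L\'{e}vy measure, as there is no Gaussian part and the characteristic function is written without a compensating drift term), where the scaling and additivity are transparent and valid simultaneously for every admissible parameter configuration.
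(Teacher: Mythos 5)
Your proof is correct, but it takes a different route from the paper's. The paper proves both parts by direct manipulation of the closed-form characteristic function \eqref{cf-tempered-stable} from Lemma~\ref{lemma-cf}: for part~(1) it multiplies the two exponentials and collects the coefficients of $\Gamma(-\beta^{\pm})[(\lambda^{\pm}\mp iz)^{\beta^{\pm}}-(\lambda^{\pm})^{\beta^{\pm}}]$, and for part~(2) it substitutes $\rho z$ and factors $(\lambda^{+}-i\rho z)^{\beta^{+}}=\rho^{\beta^{+}}(\lambda^{+}/\rho-iz)^{\beta^{+}}$. You instead argue at the level of the L\'{e}vy measure in the representation \eqref{cf-intro-one-sided}: additivity of the exponent under convolution gives $F_1+F_2$ for part~(1), and the pushforward computation under $x\mapsto\rho x$ gives the rescaled parameters for part~(2); your change-of-variables calculation producing $\alpha^{+}\rho^{\beta^{+}}$ and $\lambda^{+}/\rho$ is correct. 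The trade-off is as you identify it: the paper's computation is shorter once Lemma~\ref{lemma-cf} is in hand, but that lemma is stated only for $\beta^{\pm}\in(0,1)$, so strictly speaking the paper's proof needs the bilateral Gamma formula \eqref{cf-Gamma} (and mixed-case variants) to be invoked separately when some $\beta^{\pm}=0$, whereas your L\'{e}vy-measure argument covers all of $\beta^{\pm}\in[0,1)$ in one stroke and makes transparent \emph{why} only the $\alpha$'s add and why scaling acts on $(\alpha,\lambda)$ the way it does. Both arguments rest on the same uniqueness principle for characteristic functions, so either is acceptable.
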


\begin{proof}
For independent $X_i \sim {\rm TS}(\alpha_i^+,\beta^+,\lambda^+;\alpha_i^-,\beta^-,\lambda^-)$, $i=1,2$ we have by Lemma~\ref{lemma-cf} the characteristic function
\begin{align*}
&\varphi_{X_1 + X_2}(z) = \varphi_{X_1}(z) \varphi_{X_2}(z) 
\\ &= \exp \Big( \alpha_1^+ \Gamma(-\beta^+) \big[ (\lambda^+ - iz)^{\beta^+} - (\lambda^+)^{\beta^+} \big] 
+ \alpha_1^- \Gamma(-\beta^-) \big[ (\lambda^- + iz)^{\beta^-} - (\lambda^-)^{\beta^-} \big] \Big)
\\ &\times \exp \Big( \alpha_2^+ \Gamma(-\beta^+) \big[ (\lambda^+ - iz)^{\beta^+} - (\lambda^+)^{\beta^+} \big] 
+ \alpha_2^- \Gamma(-\beta^-) \big[ (\lambda^- + iz)^{\beta^-} - (\lambda^-)^{\beta^-} \big] \Big)
\\ &= \exp \Big( (\alpha_1^+ + \alpha_2^+) \Gamma(-\beta^+) \big[ (\lambda^+ - iz)^{\beta^+} - (\lambda^+)^{\beta^+} \big] 
\\ &\quad + (\alpha_1^- + \alpha_2^-) \Gamma(-\beta^-) \big[ (\lambda^- + iz)^{\beta^-} - (\lambda^-)^{\beta^-} \big] \Big),
\end{align*}
showing (\ref{TS-sum}). For $X \sim {\rm TS}(\alpha^+,\beta^+,\lambda^+;\alpha^-,\beta^-,\lambda^-)$ and $\rho > 0$ we have by Lemma~\ref{lemma-cf} the characteristic function
\begin{align*}
\varphi_{\rho X}(z) = \varphi_X(\rho z) 
&= \exp \Big( \alpha^+ \Gamma(-\beta^+) \big[ (\lambda^+ - i \rho z)^{\beta^+} - (\lambda^+)^{\beta^+} \big] 
\\ &\qquad\qquad + \alpha^- \Gamma(-\beta^-) \big[ (\lambda^- + i \rho z)^{\beta^-} - (\lambda^-)^{\beta^-} \big] \Big)
\\ &= \exp \Big( \alpha^+ {\rho}^{\beta^+} \Gamma(-\beta^+) \big[ (\lambda^+ / \rho - iz)^{\beta^+} - (\lambda^+ / \rho)^{\beta^+} \big] 
\\ &\qquad\qquad + \alpha^- {\rho}^{\beta^-} \Gamma(-\beta^-) \big[ (\lambda^- / \rho + iz)^{\beta^-} - (\lambda^- / \rho)^{\beta^-} \big] \Big),
\end{align*}
showing (\ref{TS-scalar}).
\end{proof}

\begin{lemma}\label{lemma-transform-para}
Let $X$ be a random variable
\begin{align*}
X &\sim {\rm TS}(\alpha^+,\beta^+,\lambda^+;\alpha^-,\beta^-,\lambda^-). 
\end{align*}
We set $\mu := \mathbb{E}[X]$ and $\sigma^2 := {\rm Var}[X]$. Let $\rho,\tau > 0$ and
\begin{align*}
Y &\sim {\rm TS}(\rho \alpha^+/(\tau \sqrt{\rho})^{\beta^+},\beta^+,\lambda^+ \tau \sqrt{\rho}; \rho \alpha^-/(\tau \sqrt{\rho})^{\beta^-},\beta^-,\lambda^- \tau \sqrt{\rho}).
\end{align*}
Then we have 
\begin{align*}
\mathbb{E}[Y] = \frac{\sqrt{\rho}}{\tau} \mu \quad \text{and} \quad {\rm Var}[Y] = \frac{\sigma^2}{\tau^2}.
\end{align*}
\end{lemma}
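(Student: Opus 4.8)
The plan is to reduce everything to the scaling and consistency properties already recorded in Lemma~\ref{lemma-TS-consistent}, together with the explicit formulas (\ref{expectation-TS}) and (\ref{variance-TS}) for the mean and variance of a tempered stable distribution. The observation driving the proof is that $Y$ is, in distribution, a deterministic rescaling of a tempered stable variable whose parameters $\alpha^\pm$ have merely been multiplied by $\rho$.

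Concretely, I would first introduce the auxiliary variable $Z \sim {\rm TS}(\rho \alpha^+,\beta^+,\lambda^+;\rho \alpha^-,\beta^-,\lambda^-)$, which is again a legitimate tempered stable distribution (it is, for instance, the time-$\rho$ marginal appearing in (\ref{X-Delta-bullet})). Since multiplying the parameters $\alpha^\pm$ by $\rho$ multiplies $\kappa_1$ and $\kappa_2$ in (\ref{cumulant-kappa-gamma}) by $\rho$, formulas (\ref{expectation-TS}) and (\ref{variance-TS}) give at once $\mathbb{E}[Z] = \rho\mu$ and ${\rm Var}[Z] = \rho\sigma^2$. Next I would apply part~(2) of Lemma~\ref{lemma-TS-consistent} to $Z$ with the scaling constant $c := \tfrac{1}{\tau\sqrt\rho} > 0$ (using a fresh symbol to avoid a clash with the $\rho$ of the statement): this yields $cZ \sim {\rm TS}\big(\rho\alpha^+ c^{\beta^+},\beta^+,\lambda^+/c;\rho\alpha^- c^{\beta^-},\beta^-,\lambda^-/c\big)$, and substituting $1/c = \tau\sqrt\rho$ one checks that $\lambda^\pm/c = \lambda^\pm\tau\sqrt\rho$ and $\rho\alpha^\pm c^{\beta^\pm} = \rho\alpha^\pm/(\tau\sqrt\rho)^{\beta^\pm}$, i.e.\ these are exactly the parameters defining $Y$. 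Hence $Y \overset{d}{=} cZ$, and therefore $\mathbb{E}[Y] = c\,\mathbb{E}[Z] = \tfrac{1}{\tau\sqrt\rho}\cdot\rho\mu = \tfrac{\sqrt\rho}{\tau}\mu$ and ${\rm Var}[Y] = c^2\,{\rm Var}[Z] = \tfrac{1}{\tau^2\rho}\cdot\rho\sigma^2 = \tfrac{\sigma^2}{\tau^2}$.

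There is essentially no obstacle here: the only point needing a moment's thought is recognizing the right auxiliary variable and scaling constant — namely, that matching the $\lambda$-parameters of $Y$ forces $c = 1/(\tau\sqrt\rho)$, after which the $\alpha$-parameters fall into place automatically because of the precise exponents $\beta^\pm$ in Lemma~\ref{lemma-TS-consistent}(2). As an alternative that avoids the auxiliary variable, one can simply substitute the parameters of $Y$ directly into (\ref{expectation-TS}) and (\ref{variance-TS}); the powers of $\tau\sqrt\rho$ then collapse via the identities $\rho/(\tau\sqrt\rho) = \sqrt\rho/\tau$ and $\rho/(\tau\sqrt\rho)^2 = 1/\tau^2$, producing the two claimed formulas directly. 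Either route is a short routine computation, so I would present the first one and leave the arithmetic implicit.
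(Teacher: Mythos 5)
Your proof is correct. It does, however, take a slightly different route from the paper: the paper simply substitutes the parameters of $Y$ into the explicit formulas (\ref{expectation-TS}) and (\ref{variance-TS}) and collapses the powers of $\tau\sqrt{\rho}$ — i.e.\ precisely the ``alternative'' you mention in your last paragraph and choose not to present. Your primary argument instead realizes $Y$ structurally as $cZ$ with $Z \sim {\rm TS}(\rho\alpha^+,\beta^+,\lambda^+;\rho\alpha^-,\beta^-,\lambda^-)$ and $c = 1/(\tau\sqrt{\rho})$, invoking Lemma~\ref{lemma-TS-consistent}(2); the identification of the parameters of $cZ$ with those of $Y$ is exact, and the conclusion then follows from the elementary transformation rules for mean and variance under affine scaling. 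What your route buys is a conceptual explanation of \emph{why} the oddly shaped parameters in the statement arise (they are forced by matching $\lambda^{\pm}$, after which the $\alpha^{\pm}$ exponents fall out of the scaling law), and it reuses a lemma already proved; what the paper's route buys is brevity and independence from Lemma~\ref{lemma-TS-consistent}. Both are complete; no gap.
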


\begin{proof}
Note that by (\ref{expectation-TS}), (\ref{variance-TS}) we have
\begin{align*}
\mu &= \Gamma(1 - \beta^+) \frac{\alpha^+}{(\lambda^+)^{1 - \beta^+}} - \Gamma(1 - \beta^-) \frac{\alpha^-}{(\lambda^-)^{1 - \beta^-}},
\\ \sigma^2 &= \Gamma(2 - \beta^+) \frac{\alpha^+}{(\lambda^+)^{2 - \beta^+}} + \Gamma(2 - \beta^-) \frac{\alpha^-}{(\lambda^-)^{2 - \beta^-}}.
\end{align*}
Therefore, we obtain
\begin{align*}
\mathbb{E}[Y] &= \Gamma(1 - \beta^+) \frac{\rho \alpha^+}{(\tau \sqrt{\rho})^{\beta^+} (\lambda^+ \tau \sqrt{\rho})^{1-\beta^+}} - \Gamma(1 - \beta^-) \frac{\rho \alpha^-}{(\tau \sqrt{\rho})^{\beta^-} (\lambda^- \tau \sqrt{\rho})^{1-\beta^-}}
\\ &= \frac{\sqrt{\rho}}{\tau} \bigg( \Gamma(1 - \beta^+) \frac{\alpha^+}{(\lambda^+)^{1 - \beta^+}} - \Gamma(1 - \beta^-) \frac{\alpha^-}{(\lambda^-)^{1 - \beta^-}} \bigg) = \frac{\sqrt{\rho}}{\tau} \mu
\end{align*}
as well as
\begin{align*}
{\rm Var}[Y] &= \Gamma(2 - \beta^+) \frac{\rho \alpha^+}{(\tau \sqrt{\rho})^{\beta^+} (\lambda^+ \tau \sqrt{\rho})^{2-\beta^+}} + \Gamma(2 - \beta^-) \frac{\rho \alpha^-}{(\tau \sqrt{\rho})^{\beta^-} (\lambda^- \tau \sqrt{\rho})^{2-\beta^-}}
\\ &= \frac{1}{\tau^2} \bigg( \Gamma(2 - \beta^+) \frac{\alpha^+}{(\lambda^+)^{2 - \beta^+}} + \Gamma(2 - \beta^-) \frac{\alpha^-}{(\lambda^-)^{2 - \beta^-}} \bigg) = \frac{\sigma^2}{\tau^2},
\end{align*}
finishing the proof.
\end{proof}

\begin{lemma}\label{lemma-det-alpha}
Let $X$ be a random variable
\begin{align*}
X \sim {\rm TS}(\alpha^+,\beta^+,\lambda^+;\alpha^-,\beta^-,\lambda^-)
\end{align*}
and let $\mu \in \mathbb{R}$ and $\sigma^2 > 0$ be arbitrary. The following statements are equivalent:
\begin{enumerate}
\item We have $\mathbb{E}[X] = \mu$ and ${\rm Var}[X] = \sigma^2$. 

\item We have
\begin{align}\label{sol-alpha-1}
\alpha^+ &= \frac{(\lambda^+)^{2-\beta^+} ((1-\beta^-)\mu + \lambda^- \sigma^2)}{\Gamma(1-\beta^+)((1-\beta^-)\lambda^+ + (1-\beta^+)\lambda^-)},
\\ \label{sol-alpha-2} \alpha^- &= \frac{(\lambda^-)^{2-\beta^-} ((\beta^+ - 1)\mu + \lambda^+ \sigma^2)}{\Gamma(1-\beta^-)((1-\beta^-)\lambda^+ + (1-\beta^+)\lambda^-)}.
\end{align}
\end{enumerate}
\end{lemma}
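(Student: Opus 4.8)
The plan is to reduce the claim to a $2\times 2$ linear system. By (\ref{expectation-TS}) and (\ref{variance-TS}), together with the identity $\Gamma(2-\beta) = (1-\beta)\Gamma(1-\beta)$, the two conditions $\mathbb{E}[X] = \mu$ and ${\rm Var}[X] = \sigma^2$ read
\begin{align*}
a - b &= \mu, \\
\frac{1-\beta^+}{\lambda^+}\, a + \frac{1-\beta^-}{\lambda^-}\, b &= \sigma^2,
\end{align*}
where I abbreviate $a := \Gamma(1-\beta^+)\alpha^+/(\lambda^+)^{1-\beta^+}$ and $b := \Gamma(1-\beta^-)\alpha^-/(\lambda^-)^{1-\beta^-}$. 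For fixed admissible $\beta^\pm \in [0,1)$ and $\lambda^\pm \in (0,\infty)$, the map $(\alpha^+,\alpha^-) \mapsto (a,b)$ is a bijection of $(0,\infty)^2$ onto itself, so statement~(1) is equivalent to $(a,b)$ solving this system.

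Next I would note that the coefficient matrix has determinant $\frac{1-\beta^-}{\lambda^-} + \frac{1-\beta^+}{\lambda^+} > 0$, since $\beta^\pm \in [0,1)$ and $\lambda^\pm > 0$; hence the system has a unique solution. Solving it (substitute $a = \mu + b$ into the second equation, or use Cramer's rule) gives
\begin{align*}
a = \frac{\lambda^+\big((1-\beta^-)\mu + \lambda^-\sigma^2\big)}{(1-\beta^-)\lambda^+ + (1-\beta^+)\lambda^-}, \qquad b = \frac{\lambda^-\big((\beta^+-1)\mu + \lambda^+\sigma^2\big)}{(1-\beta^-)\lambda^+ + (1-\beta^+)\lambda^-}.
\end{align*}
Unwinding the definitions via $\alpha^+ = a(\lambda^+)^{1-\beta^+}/\Gamma(1-\beta^+)$ and $\alpha^- = b(\lambda^-)^{1-\beta^-}/\Gamma(1-\beta^-)$ then reproduces exactly (\ref{sol-alpha-1}) and (\ref{sol-alpha-2}). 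Both implications of the equivalence follow at once: if (1) holds then $(a,b)$ must be this unique solution, giving (2); conversely, if $\alpha^\pm$ are given by (\ref{sol-alpha-1})--(\ref{sol-alpha-2}) then $(a,b)$ solves the system, which is (1).

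There is no genuine obstacle; the content is a two-dimensional linear solve plus the remark that the determinant never vanishes on the admissible parameter range. The only points to watch are the sign bookkeeping in the $\alpha^-$ component (writing $-(1-\beta^+)\mu$ as $(\beta^+-1)\mu$ to match the stated form) and the consistent use of $\Gamma(2-\beta) = (1-\beta)\Gamma(1-\beta)$ throughout the rearrangements.
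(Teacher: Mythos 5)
Your proof is correct and follows essentially the same route as the paper: both reduce the claim to a $2\times 2$ linear system via (\ref{expectation-TS}), (\ref{variance-TS}), check that the determinant is strictly positive on the admissible parameter range, and solve. Your substitution $a = \Gamma(1-\beta^+)\alpha^+/(\lambda^+)^{1-\beta^+}$, $b = \Gamma(1-\beta^-)\alpha^-/(\lambda^-)^{1-\beta^-}$ merely tidies the algebra relative to the paper's direct solve in $(\alpha^+,\alpha^-)$.
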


\begin{proof}
Let $A \in \mathbb{R}^{2 \times 2}$ be the matrix
\begin{align*}
A = \left(
\begin{array}{cc}
a_{11} & a_{12}
\\ a_{21} & a_{22}
\end{array}
\right)
= \left(
\begin{array}{cc}
\Gamma(1-\beta^+)/(\lambda^+)^{1-\beta^+} & -\Gamma(1-\beta^-)/(\lambda^-)^{1-\beta^-}
\\ \Gamma(2-\beta^+)/(\lambda^+)^{2-\beta^+} & \Gamma(2-\beta^-)/(\lambda^+)^{2-\beta^-}
\end{array}
\right).
\end{align*}
Then we have
\begin{equation}\label{det-LGS}
\begin{aligned}
\det A &= \frac{\Gamma(1-\beta^+) \Gamma(2-\beta^-)}{(\lambda^+)^{1-\beta^+} (\lambda^-)^{2-\beta^-}} + \frac{\Gamma(2-\beta^+) \Gamma(1-\beta^-)}{(\lambda^+)^{2-\beta^+} (\lambda^-)^{1-\beta^-}}
\\ &= \frac{\Gamma(1-\beta^+) \Gamma(1-\beta^-)}{(\lambda^+)^{2-\beta^+} (\lambda^-)^{2-\beta^-}} \Big( \lambda^+ (1-\beta^-) + \lambda^- (1-\beta^+) \Big) > 0.
\end{aligned}
\end{equation}
Using (\ref{expectation-TS}), (\ref{variance-TS}), a straightforward calculation shows that
\begin{align}\label{matrix-1}
a_{11} \alpha^+ &= \lambda^+ \frac{(1-\beta^-)\mu + \lambda^- \sigma^2}{(1-\beta^-) \lambda^+ + (1-\beta^+) \lambda^-},
\\ a_{12} \alpha^- &= -\lambda^- \frac{-(1-\beta^+)\mu + \lambda^+ \sigma^2}{(1-\beta^-) \lambda^+ + (1-\beta^+) \lambda^-},
\\ a_{21} \alpha^+ &= (1-\beta^+) \frac{(1-\beta^-)\mu + \lambda^- \sigma^2}{(1-\beta^-) \lambda^+ + (1-\beta^+) \lambda^-},
\\ \label{matrix-4} a_{22} \alpha^- &= (1-\beta^-) \frac{-(1-\beta^+)\mu + \lambda^+ \sigma^2}{(1-\beta^-) \lambda^+ + (1-\beta^+) \lambda^-}.
\end{align}
By (\ref{expectation-TS}), (\ref{variance-TS}), we have $\mathbb{E}[X] = \mu$ and ${\rm Var}[X] = \sigma^2$ if and only if
\begin{align}\label{LGS}
A \cdot \left( 
\begin{array}{c} 
\alpha^+
\\ \alpha^-
\end{array}
\right) = \left( 
\begin{array}{c} 
\mu
\\ \sigma^2
\end{array}
\right).
\end{align}
Because of (\ref{det-LGS}), the system of linear equations (\ref{LGS}) has a unique solution. Taking into account (\ref{matrix-1})--(\ref{matrix-4}), the solution for (\ref{LGS}) is given by (\ref{sol-alpha-1}), (\ref{sol-alpha-2}).
\end{proof}

\begin{lemma}\label{lemma-comp-single-TS}
For $X \sim {\rm TS}(\alpha,\beta,\lambda)$ we have
\begin{align*}
\mathbb{E}[X^3] = \Gamma(1-\beta) \frac{\alpha}{\lambda^{3-\beta}} \big[ \Gamma(1-\beta)^2 \alpha^2 \lambda^{2\beta} + 3 (1-\beta) \Gamma(1-\beta) \alpha \lambda^{\beta} + (1-\beta)(2-\beta) \big].
\end{align*}
\end{lemma}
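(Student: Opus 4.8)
The plan is to express the third (raw) moment of $X$ in terms of its first three cumulants and then insert the explicit cumulant formula (\ref{cumulant-one-sided}). Since $\beta \in [0,1)$, the cumulant generating function $\Psi$ of $X$ exists and is analytic in a neighbourhood of the origin --- this is (\ref{cumulant-TS-one-sided}) for $\beta \in (0,1)$ and (\ref{cumulent-Gamma-one-sided}) for $\beta = 0$ --- so all moments of $X$ are finite and the standard moment--cumulant identity
\begin{align*}
\mathbb{E}[X^3] = \kappa_3 + 3 \kappa_1 \kappa_2 + \kappa_1^3
\end{align*}
is legitimate; it follows by expanding $\kappa_3 = \mathbb{E}[(X - \mathbb{E}[X])^3]$ and using $\mathbb{E}[X^2] = \kappa_2 + \kappa_1^2$.

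Next I would substitute $\kappa_n = \Gamma(n-\beta)\,\alpha / \lambda^{n-\beta}$ from (\ref{cumulant-one-sided}), so that
\begin{align*}
\kappa_1 = \Gamma(1-\beta)\frac{\alpha}{\lambda^{1-\beta}}, \qquad \kappa_2 = \Gamma(2-\beta)\frac{\alpha}{\lambda^{2-\beta}}, \qquad \kappa_3 = \Gamma(3-\beta)\frac{\alpha}{\lambda^{3-\beta}},
\end{align*}
and use the functional equation of the Gamma function, namely $\Gamma(2-\beta) = (1-\beta)\Gamma(1-\beta)$ and $\Gamma(3-\beta) = (2-\beta)(1-\beta)\Gamma(1-\beta)$, to rewrite $\kappa_2$ and $\kappa_3$ in terms of $\Gamma(1-\beta)$.

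Finally I would collect the three summands over the common factor $\Gamma(1-\beta)\,\alpha/\lambda^{3-\beta}$: the term $\kappa_1^3$ contributes $\Gamma(1-\beta)^2 \alpha^2 \lambda^{2\beta}$, the term $3\kappa_1\kappa_2$ contributes $3(1-\beta)\Gamma(1-\beta)\alpha\lambda^{\beta}$, and $\kappa_3$ contributes $(1-\beta)(2-\beta)$, which is precisely the asserted expression. There is no genuine obstacle here beyond bookkeeping the powers of $\lambda$ in the factorisation; alternatively, one could obtain the same result by differentiating $z \mapsto \exp(\Psi(z))$ three times at $z = 0$, which reproduces the same combinatorics.
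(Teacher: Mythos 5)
Your proposal is correct and follows essentially the same route as the paper: the authors likewise invoke the moment--cumulant identity $\mathbb{E}[X^3] = \kappa_1^3 + 3\kappa_1\kappa_2 + \kappa_3$, substitute the cumulants from (\ref{cumulant-one-sided}), and factor out $\Gamma(1-\beta)\,\alpha/\lambda^{3-\beta}$ using the functional equation of the Gamma function.
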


\begin{proof}
Denoting by $\kappa_1, \kappa_2, \kappa_3$ the first three cumulants of $X$, by \cite[p.~346]{Lexikon} and (\ref{cumulant-one-sided}) we obtain the third moment
\begin{align*}
\mathbb{E}[X^3] &= \kappa_1^3 + 3 \kappa_1 \kappa_2 + \kappa_3
\\ &= \bigg( \Gamma(1-\beta) \frac{\alpha}{\lambda^{1-\beta}} \bigg)^3 + 3 \Gamma(1-\beta) \frac{\alpha}{\lambda^{1-\beta}} \Gamma(2-\beta) \frac{\alpha}{\lambda^{2-\beta}} + \Gamma(3-\beta) \frac{\alpha}{\lambda^{3-\beta}}
\\ &= \Gamma(1-\beta) \frac{\alpha}{\lambda^{3-\beta}} \big[ \Gamma(1-\beta)^2 \alpha^2 \lambda^{2\beta} + 3 (1-\beta) \Gamma(1-\beta) \alpha \lambda^{\beta} + (1-\beta)(2-\beta) \big],
\end{align*}
completing the proof.
\end{proof}

In the sequel, for $\mu \in \mathbb{R}$ and $\sigma^2 > 0$ the function $\Phi_{\mu,\sigma^2}$ denotes the distribution function of the normal distribution $N(\mu,\sigma^2)$. Moreover, $c > 0$ denotes the constant from the Berry-Esseen theorem. The current best estimate is $c \leq 0.4784$, see \cite[Cor.~1]{Korolev}.

\begin{proposition}\label{prop-CLT-with-f}
There exists a function $g : [0,1)^2 \times (0,\infty)^2 \times \mathbb{R} \rightarrow \mathbb{R}$ such that for any fixed $\beta^+,\beta^- \in [0,1)$ we have 
\begin{align}\label{f-conv-to-zero}
g(\beta^+,\beta^-,\lambda^+,\lambda^-,\mu) \rightarrow 0 \quad \text{as } \lambda^+,\lambda^-,\mu \rightarrow 0, 
\end{align}
and for any random variable
\begin{align*}
X \sim {\rm TS}(\alpha^+,\beta^+,\lambda^+;\alpha^-,\beta^-,\lambda^-),
\end{align*}
all $n \in \mathbb{N}$ and any random variable
\begin{align}\label{def-Xn-CLT}
X_n \sim {\rm TS} ( (\sqrt{n}^{2-\beta^+} / \sigma)\alpha^+, \beta^+, \lambda^+ \sigma \sqrt{n}; (\sqrt{n}^{2-\beta^-} / \sigma)\alpha^-, \beta^-, \lambda^- \sigma \sqrt{n} )
\end{align}
we have
\begin{align*}
&\sup_{x \in \mathbb{R}} |G_n(x) - \Phi_{0,1}(x)| 
\\ &\leq 32 c \bigg[ (1-\beta^+)(2-\beta^+) \frac{(1-\beta^-)\sqrt{n}\mu / \sigma^3 + \sqrt{n}\lambda^- / \sigma}{\sqrt{n}\lambda^+ ((1-\beta^-)\sqrt{n}\lambda^+ + (1-\beta^+)\sqrt{n}\lambda^-)} 
\\ &\qquad\quad\, + (1-\beta^-)(2-\beta^-) \frac{(\beta^+ - 1)\sqrt{n}\mu / \sigma^3 + \sqrt{n}\lambda^+ / \sigma}{\sqrt{n}\lambda^-((1-\beta^-)\sqrt{n}\lambda^+ + (1-\beta^+)\sqrt{n}\lambda^-)} 
\\ &\qquad\quad\, + \frac{g(\beta^+,\beta^-,\lambda^+,\lambda^-,\mu)}{\sigma^3} \bigg],
\end{align*}
where $\mu := \mathbb{E}[X]$, $\sigma^2 := {\rm Var}[X]$ and $G_n$ denotes the 
distribution function of the random variable $X_n - \sqrt{n} \mu / \sigma$.
\end{proposition}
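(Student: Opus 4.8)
The plan is to reduce the statement to an application of the Berry--Esseen theorem to a suitable triangular array of i.i.d.\ summands, using the infinite divisibility of tempered stable distributions. First I would observe that, by Lemma~\ref{lemma-TS-consistent}(2) applied with $\rho = \sigma\sqrt{n}/\lambda$-type scaling---more precisely, by combining Lemma~\ref{lemma-TS-consistent} with Lemma~\ref{lemma-transform-para} using the parameters $\rho = n$ and $\tau = \sigma\sqrt{n}$---the random variable $X_n$ in (\ref{def-Xn-CLT}) satisfies $\mathbb{E}[X_n] = \sqrt{n}\mu/\sigma$ and ${\rm Var}[X_n] = 1$. Hence $X_n - \sqrt{n}\mu/\sigma$ is centred with unit variance, and $G_n$ is its distribution function. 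Next, using (\ref{distribution-TS-time}) together with Lemma~\ref{lemma-TS-consistent}(1) (i.e.\ the fact that tempered stable distributions are infinitely divisible and reproduce under convolution in the $\alpha$-parameters), I would write $X_n - \sqrt{n}\mu/\sigma = \sum_{k=1}^n Y_{n,k}$ where $Y_{n,1},\dots,Y_{n,n}$ are i.i.d.\ copies of a random variable $Y_n \sim {\rm TS}((\sqrt{n}^{-\beta^+}/\sigma)\alpha^+,\beta^+,\lambda^+\sigma\sqrt{n};(\sqrt{n}^{-\beta^-}/\sigma)\alpha^-,\beta^-,\lambda^-\sigma\sqrt{n})$, recentred by subtracting its mean. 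Each $Y_{n,k}$ then has mean $0$ and variance $1/n$.

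With the array in hand, the Berry--Esseen theorem gives
\begin{align*}
\sup_{x\in\mathbb{R}} |G_n(x) - \Phi_{0,1}(x)| \leq c\, n \cdot \mathbb{E}\big[ |Y_{n,1}|^3 \big],
\end{align*}
so the remaining work is to bound the third absolute moment $\mathbb{E}[|Y_n - \mathbb{E}Y_n|^3]$. Since $Y_n = Y_n^+ - Y_n^-$ is a difference of two independent one-sided tempered stable variables (see the remark preceding (\ref{distribution-TS-time})), I would bound $|Y_n - \mathbb{E}Y_n|^3 \leq 4(|Y_n^+ - \mathbb{E}Y_n^+|^3 + |Y_n^- - \mathbb{E}Y_n^-|^3)$ by convexity, and then estimate each one-sided centred third absolute moment by the (computable) third absolute central moment, which in turn is controlled via $\mathbb{E}[|Z - \mathbb{E}Z|^3] \leq$ a polynomial in the lower moments. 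Here Lemma~\ref{lemma-comp-single-TS} supplies $\mathbb{E}[Z^3]$ for a one-sided ${\rm TS}(\alpha,\beta,\lambda)$, and (\ref{E-TS-one-sided}), (\ref{Var-TS-one-sided}) supply the mean and variance; combining them yields an explicit expression for the third central moment of each one-sided piece. Substituting the scaled parameters of $Y_n^{\pm}$---namely $\alpha \rightsquigarrow (\sqrt{n}^{-\beta^{\pm}}/\sigma)\alpha^{\pm}$ and $\lambda \rightsquigarrow \lambda^{\pm}\sigma\sqrt{n}$---and simplifying, the leading terms (those linear in $\alpha^{\pm}$, coming from $\kappa_3$ of each piece) produce, after multiplication by $n$, exactly the first two bracketed terms in the asserted bound, once one rewrites $\alpha^{\pm}$ via Lemma~\ref{lemma-det-alpha} (equations (\ref{sol-alpha-1}), (\ref{sol-alpha-2})) in terms of $\mu,\sigma^2,\lambda^+,\lambda^-,\beta^+,\beta^-$; the constant $32 = 4 \cdot 8$ absorbs the convexity factor together with the combinatorial constants from expanding $\mathbb{E}[|Z-\mathbb{E}Z|^3]$.

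The function $g$ collects all the remaining, lower-order contributions: the terms in the third central moments of $Y_n^{\pm}$ that are quadratic or cubic in $\alpha^{\pm}$ carry extra powers of $\lambda^{\pm}$ and of $\mu$ in the numerator (after the substitution and re-expression via Lemma~\ref{lemma-det-alpha}), so after multiplication by $n$ and division by the appropriate powers of $\sigma$ they can be written as $g(\beta^+,\beta^-,\lambda^+,\lambda^-,\mu)/\sigma^3$ for a function $g$ that is a polynomial-type expression in $\lambda^+,\lambda^-,\mu$ with no constant term, hence vanishing as $\lambda^+,\lambda^-,\mu \to 0$; this gives (\ref{f-conv-to-zero}). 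I expect the main obstacle to be purely bookkeeping: carefully tracking how the scaling exponents $\sqrt{n}^{2-\beta^{\pm}}$ and $\sqrt{n}\,\sigma$ in (\ref{def-Xn-CLT}) interact with the $\lambda^{-(n-\beta)}$ powers in the cumulant formula (\ref{cumulant-one-sided}) so that all $n$-dependence cancels except for the single factor $n$ from Berry--Esseen, and then matching the resulting rational expression against the stated right-hand side via the identities (\ref{sol-alpha-1})--(\ref{sol-alpha-2}). No conceptual difficulty is anticipated beyond this; the analyticity and convolution structure needed has already been established in the earlier lemmas.
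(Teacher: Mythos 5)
Your proposal is correct and follows essentially the same route as the paper: realize $X_n-\sqrt{n}\mu/\sigma$ as a normalized i.i.d.\ sum of copies of $X$ via Lemmas~\ref{lemma-TS-consistent} and~\ref{lemma-transform-para}, apply Berry--Esseen, bound $\mathbb{E}[|X-\mu|^3]$ by $32\big(\mathbb{E}[(X^+)^3]+\mathbb{E}[(X^-)^3]\big)$ using the subordinator decomposition, evaluate these via Lemma~\ref{lemma-comp-single-TS}, substitute (\ref{sol-alpha-1})--(\ref{sol-alpha-2}), and let $g$ absorb the terms of higher order in $\alpha^{\pm}$. The only cosmetic differences are that you phrase the reduction as a triangular array (the paper instead applies Berry--Esseen directly to $S_n=\sum Y_j$ with $Y_j\sim\mathcal{L}(X)$, which makes the $n$-dependence cancel without bookkeeping) and that you chase the constant $32$ through central rather than raw third moments.
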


\begin{proof}
We define the functions $g_i : [0,1)^2 \times (0,\infty)^2 \times \mathbb{R} \rightarrow \mathbb{R}$, $i=1,2$ as
\begin{align*}
&g_1(\beta^+,\beta^-,\lambda^+,\lambda^-,\mu) := \frac{(1-\beta^-)\mu + \lambda^- \sigma^2}{\lambda^+ ((1-\beta^-)\lambda^+ + (1-\beta^+)\lambda^-)} 
\\ &\quad \times \bigg[ \bigg( \frac{(\lambda^+)^2 ((1-\beta^-)\mu + \lambda^- \sigma^2)}{(1-\beta^-)\lambda^+ + (1-\beta^+)\lambda^-} \bigg)^2 + 3(1-\beta^+) \frac{(\lambda^+)^2 ((1-\beta^-)\mu + \lambda^- \sigma^2)}{(1-\beta^-)\lambda^+ + (1-\beta^+)\lambda^-} \bigg],
\\ &g_2(\beta^+,\beta^-,\lambda^+,\lambda^-,\mu) := \frac{(\beta^+ - 1)\mu + \lambda^+ \sigma^2}{\lambda^-((1-\beta^-)\lambda^+ + (1-\beta^+)\lambda^-)} 
\\ &\quad \times \bigg[ \bigg( \frac{(\lambda^-)^2 ((\beta^+ - 1)\mu + \lambda^+ \sigma^2)}{(1-\beta^-)\lambda^+ + (1-\beta^+)\lambda^-} \bigg)^2 + 3(1-\beta^-) \frac{(\lambda^-)^2 ((\beta^+ - 1)\mu + \lambda^+ \sigma^2)}{(1-\beta^-)\lambda^+ + (1-\beta^+)\lambda^-} \bigg],
\end{align*}
and let $g : [0,1)^2 \times (0,\infty)^2 \times \mathbb{R} \rightarrow \mathbb{R}$ be the function
\begin{align*}
g := 32 ( g_1 + g_2 ).
\end{align*}
Then, for any fixed $\beta^+,\beta^- \in [0,1)$ we have (\ref{f-conv-to-zero}).

Now, let $(Y_j)_{j \in \mathbb{N}}$ be an i.i.d. sequence of random variables with $\mathcal{L}(Y_j) = \mathcal{L}(X)$ for all $j \in \mathbb{N}$. We define the sequence $(S_n)_{n \in \mathbb{N}}$ as $S_n := \sum_{j=1}^n Y_j$ for $n \in \mathbb{N}$. By Lemma~\ref{lemma-TS-consistent} we have
\begin{align*}
\mathcal{L} \bigg( \frac{S_n}{\sigma \sqrt{n}} \bigg) &= {\rm TS} ( (\sqrt{n}^{2-\beta^+} / \sigma^{\beta^+})\alpha^+, \beta^+, \lambda^+ \sigma \sqrt{n}; (\sqrt{n}^{2-\beta^-} / \sigma^{\beta^-})\alpha^-, \beta^-, \lambda^- \sigma \sqrt{n} )
\\ &= \mathcal{L}(X_n) \quad \text{for all $n \in \mathbb{N}$,}
\end{align*}
and therefore
\begin{align*}
\mathcal{L} \bigg( X_n - \frac{\sqrt{n} \mu}{\sigma} \bigg) = \mathcal{L} \bigg( \frac{S_n}{\sigma \sqrt{n}} - \frac{\sqrt{n} \mu}{\sigma} \bigg) = \mathcal{L} \bigg( \frac{S_n - n \mu}{\sigma \sqrt{n}} \bigg) \quad \text{for all $n \in \mathbb{N}$.}
\end{align*}
By the Berry-Esseen theorem (see, e.g., \cite[Thm.~2.4.9]{Durrett}) we have
\begin{align*}
\sup_{x \in \mathbb{R}} |G_n(x) - \Phi_{0,1}(x)| \leq c \frac{\mathbb{E}[|X - \mu|^3]}{\sigma^3 \sqrt{n}} \quad \text{for all $n \in \mathbb{N}$.}
\end{align*}
We have $X = X^+ - X^-$ with independent random variables $X^+ \sim {\rm TS}(\alpha^+,\beta^+,\lambda^+)$ and $X^- \sim {\rm TS}(\alpha^-,\beta^-,\lambda^-)$, and therefore, using H\"{o}lder's inequality and Jensen's inequality we estimate
\begin{align*}
\mathbb{E}[|X - \mu|^3] &= \mathbb{E}[ |X^+ - X^- - (\mathbb{E}[X^+] - \mathbb{E}[X^-])|^3 ] 
\\ &\leq \mathbb{E}[ ( X^+ + \mathbb{E}[X^+] + X^- + \mathbb{E}[X^-] )^3 ]
\\ &\leq 4^2 \mathbb{E} [ (X^+)^3 + \mathbb{E}[X^+]^3 + (X^-)^3 + \mathbb{E}[X^-]^3 ]
\\ &= 16 \big( \mathbb{E}[(X^+)^3] + \mathbb{E}[X^+]^3 + \mathbb{E}[(X^-)^3] + \mathbb{E}[X^-]^3 \big) 
\\ &\leq 32 \big( \mathbb{E} [ (X^+)^3 ] + \mathbb{E} [ (X^-)^3 ] \big).
\end{align*}
Using Lemma~\ref{lemma-comp-single-TS} we have
\begin{align*}
\mathbb{E}[(X^+)^3] &= \Gamma(1-\beta^+) \frac{\alpha^+}{(\lambda^+)^{3-\beta^+}} \big[ \Gamma(1-\beta^+)^2 (\alpha^+)^2 (\lambda^+)^{2 \beta^+} 
\\ &\quad + 3 (1-\beta^+) \Gamma(1-\beta^+) \alpha^+ (\lambda^+)^{\beta^+} + (1-\beta^+)(2-\beta^+) \big],
\\ \mathbb{E}[(X^-)^3] &= \Gamma(1-\beta^-) \frac{\alpha^-}{(\lambda^-)^{3-\beta^-}} \big[ \Gamma(1-\beta^-)^2 (\alpha^-)^2 (\lambda^-)^{2 \beta^-} 
\\ &\quad + 3 (1-\beta^-) \Gamma(1-\beta^-) \alpha^- (\lambda^-)^{\beta^-} + (1-\beta^-)(2-\beta^-) \big].
\end{align*}
Inserting identities (\ref{sol-alpha-1}), (\ref{sol-alpha-2}) from Lemma~\ref{lemma-det-alpha} yields
\begin{align*}
\mathbb{E}[(X^+)^3] &= (1-\beta^+)(2-\beta^+) \frac{(1-\beta^-)\mu + \lambda^- \sigma^2}{\lambda^+ ((1-\beta^-)\lambda^+ + (1-\beta^+)\lambda^-)} 
\\ &\quad + g_1(\beta^+,\beta^-,\lambda^+,\lambda^-,\mu),
\\ \mathbb{E}[(X^-)^3] &= (1-\beta^-)(2-\beta^-) \frac{(\beta^+ - 1)\mu + \lambda^+ \sigma^2}{\lambda^-((1-\beta^-)\lambda^+ + (1-\beta^+)\lambda^-)} 
\\ &\quad + g_2(\beta^+,\beta^-,\lambda^+,\lambda^-,\mu).
\end{align*}
Therefore, for all $n \in \mathbb{N}$ we conclude
\begin{align*}
&\sup_{x \in \mathbb{R}} |G_n(x) - \Phi_{0,1}(x)| \leq \frac{32 c}{\sigma^3 \sqrt{n}} \bigg[ (1-\beta^+)(2-\beta^+) \frac{(1-\beta^-)\mu + \lambda^- \sigma^2}{\lambda^+ ((1-\beta^-)\lambda^+ + (1-\beta^+)\lambda^-)}
\\ &\quad+ (1-\beta^-)(2-\beta^-) \frac{(\beta^+ - 1)\mu + \lambda^+ \sigma^2}{\lambda^-((1-\beta^-)\lambda^+ + (1-\beta^+)\lambda^-)} + g(\beta^+,\beta^-,\lambda^+,\lambda^-,\mu) \bigg]
\\ &= 32 c \bigg[ (1-\beta^+)(2-\beta^+) \frac{(1-\beta^-)\sqrt{n}\mu / \sigma^3 + \sqrt{n}\lambda^- / \sigma}{\sqrt{n}\lambda^+ ((1-\beta^-)\sqrt{n}\lambda^+ + (1-\beta^+)\sqrt{n}\lambda^-)} 
\\ &\qquad\quad\, + (1-\beta^-)(2-\beta^-) \frac{(\beta^+ - 1)\sqrt{n}\mu / \sigma^3 + \sqrt{n}\lambda^+ / \sigma}{\sqrt{n}\lambda^-((1-\beta^-)\sqrt{n}\lambda^+ + (1-\beta^+)\sqrt{n}\lambda^-)} 
\\ &\qquad\quad\, + \frac{g(\beta^+,\beta^-,\lambda^+,\lambda^-,\mu)}{\sigma^3} \bigg],
\end{align*}
finishing the proof.
\end{proof}

\begin{proposition}\label{prop-unit-var}
For any random variable
\begin{align*}
X \sim {\rm TS}(\alpha^+,\beta^+,\lambda^+;\alpha^-,\beta^-,\lambda^-)
\end{align*}
with ${\rm Var}[X] = 1$ we have
\begin{align*}
\sup_{x \in \mathbb{R}} |G_{X - \mu}(x) - \Phi_{0,1}(x)| &\leq 32 c \bigg[ (1-\beta^+)(2-\beta^+) \frac{(1-\beta^-)\mu + \lambda^-}{\lambda^+ ((1-\beta^-)\lambda^+ + (1-\beta^+)\lambda^-)} 
\\ &\qquad + (1-\beta^-)(2-\beta^-) \frac{(\beta^+ - 1)\mu + \lambda^+}{\lambda^-((1-\beta^-)\lambda^+ + (1-\beta^+)\lambda^-)} \bigg],
\end{align*}
where $\mu := \mathbb{E}[X]$.
\end{proposition}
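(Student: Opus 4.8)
The plan is to deduce this from Proposition~\ref{prop-CLT-with-f} by exploiting the infinite divisibility of tempered stable distributions and then letting the number of i.i.d.\ summands tend to infinity. Since ${\rm Var}[X]=1$, for each $n\in\mathbb{N}$ one can write $X$, in distribution, as a sum $\sum_{j=1}^{n}Y_j$ of i.i.d.\ random variables with
\[
Y_j\sim{\rm TS}(\alpha^+/n,\beta^+,\lambda^+;\alpha^-/n,\beta^-,\lambda^-),
\]
which follows by iterating Lemma~\ref{lemma-TS-consistent}(i) (or directly from the characteristic function in Lemma~\ref{lemma-cf}). Each $Y_j$ has $\mathbb{E}[Y_j]=\mu/n$ and ${\rm Var}[Y_j]=1/n$, so $\sum_{j=1}^{n}Y_j$ has mean $\mu$ and variance $1$, and hence $X-\mu$ coincides in distribution with the standardized sum $\big(\sum_{j=1}^{n}Y_j-n(\mu/n)\big)\big/\big(\sqrt n\,\sqrt{1/n}\big)$.

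Next I would apply Proposition~\ref{prop-CLT-with-f} with the underlying variable taken to be $Y_1\sim{\rm TS}(\alpha^+/n,\beta^+,\lambda^+;\alpha^-/n,\beta^-,\lambda^-)$ and with $n$ playing the role of the index there. Since $\mathbb{E}[Y_1]=\mu/n$ and ${\rm Var}[Y_1]=1/n$, substituting these values into the construction of $X_n$ shows that the distribution function $G_n$ produced by Proposition~\ref{prop-CLT-with-f} equals $G_{X-\mu}$ for every $n$ (one checks $X_n\overset{d}{=}X$ and $\sqrt n\,(\mu/n)/\sqrt{1/n}=\mu$). Carrying out the same substitution in its upper bound, all powers of $\sqrt n$ cancel against the powers of $\sigma^2=1/n$, and the first two bracketed terms become the $n$-independent quantities
\[
(1-\beta^+)(2-\beta^+)\frac{(1-\beta^-)\mu+\lambda^-}{\lambda^+\big((1-\beta^-)\lambda^++(1-\beta^+)\lambda^-\big)}
\quad\text{and}\quad
(1-\beta^-)(2-\beta^-)\frac{(\beta^+-1)\mu+\lambda^+}{\lambda^-\big((1-\beta^-)\lambda^++(1-\beta^+)\lambda^-\big)},
\]
i.e.\ exactly the two terms in the assertion (and, by Lemma~\ref{lemma-det-alpha} together with \eqref{cumulant-one-sided}, they are the third cumulants of the one-sided parts $X^+$ and $X^-$).

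It then remains to control the $g$-term in the bound of Proposition~\ref{prop-CLT-with-f}. Inserting the parameters of $Y_1$ — in particular the mean $\mu/n$ and the variance $1/n$ — into the defining formulas for $g_1$ and $g_2$, each is seen to be the product of a prefactor proportional to $1/n$ with a bracket of order $1/n$, so $g=32(g_1+g_2)=O(1/n^2)$; after division by $\sigma^3=n^{-3/2}$ this contributes only a term of order $n^{-1/2}$. Hence, for every $n\in\mathbb{N}$, Proposition~\ref{prop-CLT-with-f} gives the desired inequality with the two fixed terms above plus a remainder that tends to $0$ as $n\to\infty$; letting $n\to\infty$ finishes the proof.

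I expect the only genuine obstacle to be conceptual rather than computational: a direct use of Proposition~\ref{prop-CLT-with-f} at $n=1$ is not sharp enough, since it retains the strictly positive nuisance term
\[
32(g_1+g_2)=32\big((\mathbb{E}[X^+])^3+3\,\mathbb{E}[X^+]\,{\rm Var}[X^+]+(\mathbb{E}[X^-])^3+3\,\mathbb{E}[X^-]\,{\rm Var}[X^-]\big),
\]
and one must notice that splitting $X$ into $n$ vanishingly small i.i.d.\ pieces drives this term to $0$ in the limit while leaving the leading third-cumulant contribution intact. Everything else is routine bookkeeping: verifying the reparametrization identities $X_n\overset{d}{=}X$ and $G_n=G_{X-\mu}$, and tracking the cancellation of the powers of $n$ and $\sigma$.
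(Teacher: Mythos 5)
Your proof is correct and follows essentially the same route as the paper: both decompose $X$ into $n$ i.i.d.\ tempered stable pieces, apply Proposition~\ref{prop-CLT-with-f} to recover $G_{X-\mu}$ as the $G_n$ of that proposition, note that the two leading terms are independent of $n$ while the nuisance term $g/\sigma^3$ vanishes, and let $n\to\infty$. The only (cosmetic) difference is the parametrization of the pieces: the paper rescales so that each auxiliary variable has unit variance and invokes property \eqref{f-conv-to-zero} at the shrunken parameters $\lambda^{\pm}/\sqrt{n}$, $\mu/\sqrt{n}$, whereas you keep $\lambda^{\pm}$ fixed, take $\sigma^2=1/n$, and estimate $g/\sigma^3=O(n^{-1/2})$ directly; the two bookkeepings are related by the substitution $Y\mapsto Y/\sqrt{n}$ and give the identical bound.
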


\begin{proof}
Let $g : [0,1)^2 \times (0,\infty)^2 \times \mathbb{R} \rightarrow \mathbb{R}$ the function from Proposition~\ref{prop-CLT-with-f}.
It suffices to show that for each $\epsilon > 0$ we have
\begin{equation}\label{rate-epsilon}
\begin{aligned}
\sup_{x \in \mathbb{R}} |G_{X-\mu}(x) - \Phi_{0,1}(x)| &\leq 32 c \bigg[ (1-\beta^+)(2-\beta^+) \frac{(1-\beta^-)\mu / \sigma^3 + \lambda^- / \sigma}{\lambda^+ ((1-\beta^-)\lambda^+ + (1-\beta^+)\lambda^-)} 
\\ &+ (1-\beta^-)(2-\beta^-) \frac{(\beta^+ - 1)\mu / \sigma^3 + \lambda^+ / \sigma}{\lambda^-((1-\beta^-)\lambda^+ + (1-\beta^+)\lambda^-)} + \epsilon \bigg].
\end{aligned}
\end{equation}
Let $\epsilon > 0$ be arbitrary. There exists $n \in \mathbb{N}$ such that
\begin{align}\label{est-f-epsilon}
g(\beta^+,\beta^-,\lambda^+ / \sqrt{n}, \lambda^- / \sqrt{n}, \mu / \sqrt{n}) \leq \epsilon.
\end{align}
Let $Y$ be a random variable
\begin{align*}
Y \sim {\rm TS}( \alpha^+ / \sqrt{n}^{2-\beta^+}, \beta^+, \lambda^+ / \sqrt{n}; \alpha^- / \sqrt{n}^{2-\beta^-}, \beta^-, \lambda^- / \sqrt{n} ).
\end{align*}
Applying Lemma~\ref{lemma-transform-para} with $\rho = 1/n$ and $\tau = 1$ we obtain 
\begin{align*}
\mathbb{E}[Y] = \mu / \sqrt{n} \quad \text{and} \quad {\rm Var}[Y] = 1 \quad \text{for all $n \in \mathbb{N}$.} 
\end{align*}
Hence, defining the random variable $Y_n$ according to (\ref{def-Xn-CLT}), we have
\begin{align*}
\mathcal{L}(Y_n) &= {\rm TS}( \sqrt{n}^{2-\beta^+} (\alpha^+ / \sqrt{n}^{2-\beta^+}), \beta^+, \sqrt{n} (\lambda^+ / \sqrt{n}); \\ &\qquad \quad \sqrt{n}^{2-\beta^-} (\alpha^- / \sqrt{n}^{2-\beta^-}), \beta^-, \sqrt{n} (\lambda^- / \sqrt{n}) )
\\ &= {\rm TS}(\alpha^+,\beta^+,\lambda^+;\alpha^-,\beta^-,\lambda^-) = \mathcal{L}(X).
\end{align*}
By Proposition~\ref{prop-CLT-with-f} we deduce
\begin{align*}
&\sup_{x \in \mathbb{R}} |G_{X-\mu}(x) - \Phi_{0,1}(x)| = \sup_{x \in \mathbb{R}} |G_{Y_n - \mu}(x) - \Phi_{0,1}(x)| 
\\ &\leq 32 c \bigg[ (1-\beta^+)(2-\beta^+) \frac{(1-\beta^-)\mu + \lambda^-}{\lambda^+ ((1-\beta^-)\lambda^+ + (1-\beta^+)\lambda^-)} 
\\ &\qquad\quad\, + (1-\beta^-)(2-\beta^-) \frac{(\beta^+ - 1)\mu + \lambda^+}{\lambda^-((1-\beta^-)\lambda^+ + (1-\beta^+)\lambda^-)} 
\\ &\qquad\quad\, + g(\beta^+,\beta^-,\lambda^+ / \sqrt{n},\lambda^- / \sqrt{n},\mu / \sqrt{n}) \bigg],
\end{align*}
and, by virtue of estimate (\ref{est-f-epsilon}), we arrive at (\ref{rate-epsilon}).
\end{proof}

\begin{theorem}\label{thm-CLT-TS}
For any random variable
\begin{align*}
X \sim {\rm TS}(\alpha^+,\beta^+,\lambda^+;\alpha^-,\beta^-,\lambda^-)
\end{align*}
we have the estimate
\begin{align*}
\sup_{x \in \mathbb{R}} |G_X(x) - \Phi_{\mu,\sigma^2}(x)| &\leq 32 c \bigg[ (1-\beta^+)(2-\beta^+) \frac{(1-\beta^-)\mu / \sigma^3 + \lambda^- / \sigma}{\lambda^+ ((1-\beta^-)\lambda^+ + (1-\beta^+)\lambda^-)} 
\\ &\qquad + (1-\beta^-)(2-\beta^-) \frac{(\beta^+ - 1)\mu / \sigma^3 + \lambda^+ / \sigma}{\lambda^-((1-\beta^-)\lambda^+ + (1-\beta^+)\lambda^-)} \bigg],
\end{align*}
where $\mu := \mathbb{E}[X]$ and $\sigma^2 := {\rm Var}[X]$.
\end{theorem}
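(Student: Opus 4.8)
The plan is to reduce the general statement to the unit-variance case already established in Proposition~\ref{prop-unit-var} by a scaling argument. Write $\mu := \mathbb{E}[X]$ and $\sigma^2 := {\rm Var}[X]$, and set $Y := X/\sigma$. By Lemma~\ref{lemma-TS-consistent}(2), applied with $\rho = 1/\sigma$, the variable $Y$ is again tempered stable,
\begin{align*}
Y \sim {\rm TS}\big( \alpha^+ \sigma^{-\beta^+}, \beta^+, \lambda^+ \sigma;\, \alpha^- \sigma^{-\beta^-}, \beta^-, \lambda^- \sigma \big),
\end{align*}
so that the positive and negative rate parameters of $Y$ are $\lambda^{\pm}\sigma$ while the stability indices $\beta^{\pm}$ are unchanged (the parameters $\alpha^{\pm}$ will not enter the final estimate).

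Next I would observe that $\mathbb{E}[Y] = \mu/\sigma$ and ${\rm Var}[Y] = 1$, so that Proposition~\ref{prop-unit-var} applies to $Y$ and bounds $\sup_{x \in \mathbb{R}}|G_{Y-\mu/\sigma}(x) - \Phi_{0,1}(x)|$ by $32c$ times the bracketed expression in that proposition, now with $\lambda^{\pm}$ replaced by $\lambda^{\pm}\sigma$ and $\mu$ replaced by $\mu/\sigma$. To transfer this back to $X$, note that $Y - \mu/\sigma = (X-\mu)/\sigma$, so that $G_X$ and $\Phi_{\mu,\sigma^2}$ are obtained from $G_{Y-\mu/\sigma}$ and $\Phi_{0,1}$ through the bijective affine substitution $x \mapsto (x-\mu)/\sigma$; consequently the two Kolmogorov distances coincide,
\begin{align*}
\sup_{x \in \mathbb{R}} |G_X(x) - \Phi_{\mu,\sigma^2}(x)| = \sup_{x \in \mathbb{R}} |G_{Y-\mu/\sigma}(x) - \Phi_{0,1}(x)|.
\end{align*}

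It then remains to simplify the right-hand side. In each of the two fractions from the Proposition~\ref{prop-unit-var} bound the inner bracket $(1-\beta^-)\lambda^+\sigma + (1-\beta^+)\lambda^-\sigma$ contributes a factor $\sigma$ and the outer $\lambda^{\pm}\sigma$ another, so dividing numerator and denominator by $\sigma^2$ replaces $(1-\beta^-)(\mu/\sigma) + \lambda^-\sigma$ by $(1-\beta^-)\mu/\sigma^3 + \lambda^-/\sigma$ over $\lambda^+\big((1-\beta^-)\lambda^+ + (1-\beta^+)\lambda^-\big)$, and analogously for the second term; this is exactly the asserted inequality. Thus the proof is a short chain of already-established facts, and the only step requiring attention is the bookkeeping of the powers of $\sigma$ in this last simplification — I do not anticipate a genuine obstacle.
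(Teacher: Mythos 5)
Your proposal is correct and follows essentially the same route as the paper: rescale to $X/\sigma$ via Lemma~\ref{lemma-TS-consistent}, invoke Proposition~\ref{prop-unit-var} for the unit-variance variable, use the affine substitution to identify the two Kolmogorov distances, and simplify the powers of $\sigma$ (the paper additionally cites Lemma~\ref{lemma-transform-para} with $\rho=1$, $\tau=\sigma$ for the mean and variance of $X/\sigma$, but this is the same trivial computation you carry out directly). The $\sigma$-bookkeeping in your final step checks out.
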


\begin{proof}
By Lemma~\ref{lemma-TS-consistent}, the random variable $X / \sigma$ has the distribution
\begin{align*}
X / \sigma \sim {\rm TS}(\alpha^+ / {\sigma}^{\beta^+},\beta^+,\lambda^+ \sigma;\alpha^- / {\sigma}^{\beta^-},\beta^-,\lambda^- \sigma),
\end{align*}
and applying Lemma~\ref{lemma-transform-para} with $\rho = 1$ and $\tau = \sigma$ yields that
\begin{align*}
\mathbb{E}[X / \sigma] = \mu / \sigma \quad \text{and} \quad {\rm Var}[X / \sigma] = 1. 
\end{align*}
Moreover, since
\begin{align*}
G_X(x) = G_{X / \sigma - \mu / \sigma}\bigg( \frac{x-\mu}{\sigma} \bigg) \quad \text{and} \quad \Phi_{\mu,\sigma^2}(x) = \Phi_{0,1} \bigg( \frac{x-\mu}{\sigma} \bigg) \quad \text{for all $x \in \mathbb{R}$,}
\end{align*}
applying Proposition~\ref{prop-unit-var} gives us
\begin{align*}
&\sup_{x \in \mathbb{R}} |G_X(x) - \Phi_{\mu,\sigma^2}(x)| = \sup_{x \in \mathbb{R}} |G_{X / \sigma - \mu / \sigma}(x) - \Phi_{0,1}(x)|
\\ &\leq 32 c \bigg[ (1-\beta^+)(2-\beta^+) \frac{(1-\beta^-)\mu / \sigma + \lambda^- \sigma}{\lambda^+ \sigma ((1-\beta^-)\lambda^+ \sigma + (1-\beta^+)\lambda^- \sigma)} 
\\ &\qquad\quad\, + (1-\beta^-)(2-\beta^-) \frac{(\beta^+ - 1)\mu / \sigma + \lambda^+ \sigma}{\lambda^- \sigma ((1-\beta^-)\lambda^+ \sigma + (1-\beta^+)\lambda^- \sigma)} \bigg],
\\ &= 32 c \bigg[ (1-\beta^+)(2-\beta^+) \frac{(1-\beta^-)\mu / \sigma^3 + \lambda^- / \sigma}{\lambda^+ ((1-\beta^-)\lambda^+ + (1-\beta^+)\lambda^-)} 
\\ &\qquad\quad\, + (1-\beta^-)(2-\beta^-) \frac{(\beta^+ - 1)\mu / \sigma^3 + \lambda^+ / \sigma}{\lambda^-((1-\beta^-)\lambda^+ + (1-\beta^+)\lambda^-)} \bigg],
\end{align*}
which completes the proof.
\end{proof}

Theorem~\ref{thm-CLT-TS} tells us, how close the distribution of a tempered stable random variable $X$ is to the normal distribution $N(\mu,\sigma^2)$ with $\mu = \mathbb{E}[X]$ and $\sigma^2 = {\rm Var}[X]$. In the upcoming result, for given values of $\mu \in \mathbb{R}$ and $\sigma^2 > 0$ we construct a sequence of tempered stable distributions, which converges weakly to $N(\mu,\sigma^2)$, and provide a convergence rate.

\begin{corollary}\label{cor-mu-sigma-distribution}
Let  $\mu \in \mathbb{R}$, $\sigma^2 > 0$ and
\begin{align*}
(\beta^+,\lambda^+;\beta^-,\lambda^-) \in ([0,1) \times (0,\infty))^2
\end{align*}
be arbitrary. For each $n \in \mathbb{N}$ we define
\begin{align}\label{def-new-para-1}
\alpha_n^+ &:= \frac{(\lambda^+)^{2-\beta^+} ( (1-\beta^-) \mu + \lambda^-  \sigma^2 \sqrt{n})}{\Gamma(1-\beta^+) ((1-\beta^-)\lambda^+ + (1-\beta^+)\lambda^-)} \sqrt{n}^{1-\beta^+},
\\ \label{def-new-para-2} \lambda_n^+ &:= \lambda^+ \sqrt{n},
\\ \label{def-new-para-3} \alpha_n^- &:= \frac{(\lambda^-)^{2-\beta^-} ( (\beta^+ - 1) \mu + \lambda^+ \sigma^2 \sqrt{n})}{\Gamma(1-\beta^-) ((1-\beta^-)\lambda^+ + (1-\beta^+)\lambda^-)} \sqrt{n}^{1-\beta^-},
\\ \label{def-new-para-4} \lambda_n^- &:= \lambda^- \sqrt{n}.
\end{align}
Then, there exists an index $n_0 \in \mathbb{N}$ with $\alpha_n^+ > 0$ and $\alpha_n^- > 0$ for all integers $n \geq n_0$, and for any sequence $(X_n)_{n \geq n_0}$ of random variables with 
\begin{align*}
X_n \sim {\rm TS}(\alpha_n^+,\beta^+,\lambda_n^+;\alpha_n^-,\beta^-,\lambda_n^-), \quad n \geq n_0
\end{align*}
we have the estimate
\begin{equation}\label{conv-rate-distribution}
\begin{aligned}
&\sup_{x \in \mathbb{R}} |G_{X_n}(x) - \Phi_{\mu,\sigma^2}(x)|
\\ &\leq \frac{32 c}{\sqrt{n}} \bigg[ \frac{(1-\beta^+)(2-\beta^+)}{\lambda^+((1-\beta^-)\lambda^+ + (1-\beta^+)\lambda^-)} \bigg( \frac{(1-\beta^-) \mu}{\sigma^3 \sqrt{n}} + \frac{\lambda^-}{\sigma} \bigg)
\\ &\quad\quad\quad\,\, + \frac{(1-\beta^-)(2-\beta^-)}{\lambda^+((1-\beta^-)\lambda^+ + (1-\beta^+)\lambda^-)} \bigg( \frac{(\beta^+ - 1) \mu}{\sigma^3 \sqrt{n}} + \frac{\lambda^+}{\sigma} \bigg) \bigg] \rightarrow 0 \quad \text{for $n \rightarrow \infty$.}
\end{aligned}
\end{equation}
\end{corollary}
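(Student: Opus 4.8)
The plan is to recognize that the parameters (\ref{def-new-para-1})--(\ref{def-new-para-4}) are exactly those dictated by Lemma~\ref{lemma-det-alpha}, so that $\mathbb{E}[X_n]=\mu$ and ${\rm Var}[X_n]=\sigma^2$ for every admissible $n$, and then to apply Theorem~\ref{thm-CLT-TS} to $X_n$ and simplify the resulting bound.

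First I would dispose of the positivity assertion. With $\beta^\pm\in[0,1)$ and $\lambda^\pm,\sigma^2>0$ fixed, the quantities $(\lambda^+)^{2-\beta^+}$, $(\lambda^-)^{2-\beta^-}$, $\Gamma(1-\beta^+)$, $\Gamma(1-\beta^-)$, $(1-\beta^-)\lambda^+ + (1-\beta^+)\lambda^-$ and $(\sqrt{n})^{1-\beta^\pm}$ are all strictly positive, whereas the numerators $(1-\beta^-)\mu+\lambda^-\sigma^2\sqrt{n}$ and $(\beta^+-1)\mu+\lambda^+\sigma^2\sqrt{n}$ diverge to $+\infty$. Hence there is $n_0\in\mathbb{N}$ with $\alpha_n^+>0$ and $\alpha_n^->0$ for all $n\geq n_0$.

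Next I would observe that (\ref{def-new-para-2}) and (\ref{def-new-para-4}) say $\lambda_n^\pm=\lambda^\pm\sqrt{n}$, and that inserting $\lambda_n^\pm$ in place of $\lambda^\pm$ (but keeping $\mu$ and $\sigma^2$) into the formulas (\ref{sol-alpha-1}), (\ref{sol-alpha-2}) of Lemma~\ref{lemma-det-alpha}, together with $(1-\beta^-)\lambda_n^+ + (1-\beta^+)\lambda_n^- = \sqrt{n}\,((1-\beta^-)\lambda^+ + (1-\beta^+)\lambda^-)$ and $(\lambda_n^\pm)^{2-\beta^\pm}=(\lambda^\pm)^{2-\beta^\pm}(\sqrt{n})^{2-\beta^\pm}$, reproduces precisely (\ref{def-new-para-1}), (\ref{def-new-para-3}) (after cancelling one factor $\sqrt{n}$, which leaves the companion factor $(\sqrt{n})^{1-\beta^\pm}$). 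By the equivalence in Lemma~\ref{lemma-det-alpha}, applied with the parameters $\lambda_n^\pm$, this gives $\mathbb{E}[X_n]=\mu$ and ${\rm Var}[X_n]=\sigma^2$ for all $n\geq n_0$. One could instead verify this directly from (\ref{expectation-TS}) and (\ref{variance-TS}): the $\Gamma(1-\beta^\pm)$ factors and the powers of $\sqrt{n}$ cancel, and upon collecting terms the $\sigma^2\sqrt{n}$ contributions cancel in the mean while the $\mu/\sqrt{n}$ contributions cancel in the variance.

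Finally, applying Theorem~\ref{thm-CLT-TS} to $X_n$ and using $\mathbb{E}[X_n]=\mu$, ${\rm Var}[X_n]=\sigma^2$ yields the displayed estimate with $\lambda_n^\pm$ in place of $\lambda^\pm$; substituting $\lambda_n^\pm=\lambda^\pm\sqrt{n}$, factoring $\sqrt{n}$ out of each numerator and cancelling against the factor $n$ produced in each denominator rewrites each of the two summands as $\tfrac{1}{\sqrt{n}}$ times the corresponding term in (\ref{conv-rate-distribution}), which establishes that estimate. The convergence to $0$ is then immediate, since as $n\to\infty$ the $\mu/(\sigma^3\sqrt{n})$ terms inside the bracket vanish, the bracket tends to a finite constant, and the prefactor $1/\sqrt{n}$ drives the bound to $0$. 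There is no genuine analytic obstacle here; the only thing that needs care is the bookkeeping of the scaling $\lambda^\pm\mapsto\lambda^\pm\sqrt{n}$ and of the companion factor $(\sqrt{n})^{1-\beta^\pm}$ appearing in (\ref{def-new-para-1}), (\ref{def-new-para-3}), which is exactly what forces $X_n$ to have the prescribed mean and variance at every scale.
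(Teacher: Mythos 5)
Your proposal is correct and follows essentially the same route as the paper's own proof: positivity of $\alpha_n^\pm$ for large $n$ from the definitions, the identification of (\ref{def-new-para-1}), (\ref{def-new-para-3}) with the formulas of Lemma~\ref{lemma-det-alpha} at the scaled parameters $\lambda^\pm\sqrt{n}$ to get $\mathbb{E}[X_n]=\mu$ and ${\rm Var}[X_n]=\sigma^2$, and then an application of Theorem~\ref{thm-CLT-TS}. You merely spell out the scaling bookkeeping that the paper leaves implicit.
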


\begin{proof}
The existence of an index $n_0 \in \mathbb{N}$ with $\alpha_n^+ > 0$ and $\alpha_n^- > 0$ for all $n \geq n_0$ immediately follows from the Definitions (\ref{def-new-para-1}), (\ref{def-new-para-3}) of $\alpha_n^+, \alpha_n^-$. By Lemma~\ref{lemma-det-alpha} we have $\mathbb{E}[X_n] = \mu$ and ${\rm Var}[X_n] = \sigma^2$ for all $n \geq n_0$. Applying Theorem~\ref{thm-CLT-TS} yields the asserted estimate (\ref{conv-rate-distribution}).
\end{proof}

\begin{remark}
The Definitions (\ref{def-new-para-1})--(\ref{def-new-para-4}) imply that
\begin{align*}
&\Gamma(1-\beta^+) \frac{\alpha_n^+}{(\lambda_n^+)^{1-\beta^+}} - \Gamma(1-\beta^-) \frac{\alpha_n^-}{(\lambda_n^-)^{1-\beta^-}}
\\ &= \frac{\lambda^+ ( (1-\beta^-) \mu + \lambda^-  \sigma^2 \sqrt{n})}{(1-\beta^-)\lambda^+ + (1-\beta^+)\lambda^-} 
- \frac{\lambda^- ( (\beta^+ - 1) \mu + \lambda^+ \sigma^2 \sqrt{n})}{(1-\beta^-)\lambda^+ + (1-\beta^+)\lambda^-} = \mu
\end{align*}
for all $n \geq n_0$, the convergences
\begin{align*}
&\frac{\alpha_n^+}{(\lambda_n^+)^{2-\beta^+}} \rightarrow \frac{\lambda^- \sigma^2}{\Gamma(1-\beta^+)((1-\beta^-)\lambda^+ + (1-\beta^+)\lambda^-)} =: (\sigma^+)^2,
\\ &\frac{\alpha_n^-}{(\lambda_n^-)^{2-\beta^-}} \rightarrow \frac{\lambda^+ \sigma^2}{\Gamma(1-\beta^-)((1-\beta^-)\lambda^+ + (1-\beta^+)\lambda^-)} =: (\sigma^-)^2
\end{align*}
for $n \rightarrow \infty$ as well as
\begin{align*}
\Gamma(2-\beta^+) (\sigma^+)^2 + \Gamma(2-\beta^-) (\sigma^-)^2 = \frac{(1-\beta^+) \lambda^- + (1-\beta^-) \lambda^+}{(1-\beta^-)\lambda^+ + (1-\beta^+)\lambda^-} \sigma^2 = \sigma^2.
\end{align*}
Consequently, conditions (\ref{CLT-cond-1}), (\ref{CLT-cond-2}) are satisfied, and hence, Proposition~\ref{prop-closure} yields the weak convergence (\ref{weak-normal}). In addition, Corollary~\ref{cor-mu-sigma-distribution} provides the convergence rate (\ref{conv-rate-distribution}).
\end{remark}

\begin{theorem}\label{thm-CLT-TS-process}
For any tempered stable process
\begin{align*}
X \sim {\rm TS}(\alpha^+,\beta^+,\lambda^+;\alpha^-,\beta^-,\lambda^-)
\end{align*}
we have the estimate
\begin{equation}\label{conv-rate-process}
\begin{aligned}
&\sup_{x \in \mathbb{R}} |G_{X_t}(x) - G_{W_t}(x)| \leq \frac{32 c}{\sqrt{t}} \bigg[ (1-\beta^+)(2-\beta^+) \frac{(1-\beta^-)\mu / \sigma^3 + \lambda^- / \sigma}{\lambda^+ ((1-\beta^-)\lambda^+ + (1-\beta^+)\lambda^-)} 
\\ &\quad + (1-\beta^-)(2-\beta^-) \frac{(\beta^+ - 1)\mu / \sigma^3 + \lambda^+ / \sigma}{\lambda^-((1-\beta^-)\lambda^+ + (1-\beta^+)\lambda^-)} \bigg] \rightarrow 0 \quad \text{for $t \rightarrow \infty$,}
\end{aligned}
\end{equation}
where $\mu := \mathbb{E}[X_1]$, $\sigma^2 := {\rm Var}[X_1]$ and $W$ is a Brownian motion with $W_1 \sim N(\mu,\sigma^2)$.
\end{theorem}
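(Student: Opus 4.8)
The plan is to deduce this directly from Theorem~\ref{thm-CLT-TS} applied to the fixed-time marginal $X_t$. By the scaling relation (\ref{X-Delta-bullet}) (equivalently (\ref{distribution-TS-time}) with $s = 0$), for every $t > 0$ the random variable $X_t$ is again tempered stable, namely $X_t \sim {\rm TS}(\alpha^+ t, \beta^+, \lambda^+; \alpha^- t, \beta^-, \lambda^-)$; only the parameters $\alpha^\pm$ get multiplied by $t$, while $\beta^\pm$ and $\lambda^\pm$ remain unchanged. From (\ref{expectation-TS}) and (\ref{variance-TS}) this gives $\mathbb{E}[X_t] = t\mu$ and ${\rm Var}[X_t] = t\sigma^2$ with $\mu = \mathbb{E}[X_1]$, $\sigma^2 = {\rm Var}[X_1]$, and hence $G_{W_t} = \Phi_{t\mu,t\sigma^2}$ since $W_t \sim N(t\mu,t\sigma^2)$.

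Next I would invoke Theorem~\ref{thm-CLT-TS} for $X_t$. In the resulting bound the quantities $\beta^\pm$ and $\lambda^\pm$ enter unchanged, while the mean and variance are $t\mu$ and $t\sigma^2$, so the standard deviation appearing in the denominators is $\sqrt{t}\,\sigma$. Substituting these, every term on the right-hand side acquires a common factor $1/\sqrt{t}$: indeed $(t\mu)/(t\sigma^2)^{3/2} = \mu/(\sqrt{t}\,\sigma^3)$ and $\lambda^\pm/(t\sigma^2)^{1/2} = \lambda^\pm/(\sqrt{t}\,\sigma)$. Pulling out $1/\sqrt{t}$ from the bracket yields exactly the displayed estimate (\ref{conv-rate-process}), and since the bracket is a fixed constant (depending only on $\beta^\pm,\lambda^\pm,\mu,\sigma$), letting $t \to \infty$ gives the asserted convergence to $0$.

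The argument is essentially bookkeeping; the only points deserving attention are that Theorem~\ref{thm-CLT-TS} is stated for arbitrary real parameters $\alpha^\pm > 0$, so it may legitimately be applied with $\alpha^\pm$ replaced by $t\alpha^\pm$ for non-integer $t$, and that one must correctly simplify the powers of $t$ when inserting $\mathbb{E}[X_t] = t\mu$ and ${\rm Var}[X_t] = t\sigma^2$ into the estimate. I do not anticipate any genuine obstacle beyond this.
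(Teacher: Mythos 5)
Your proposal is correct and follows exactly the paper's own argument: identify $X_t \sim {\rm TS}(\alpha^+ t,\beta^+,\lambda^+;\alpha^- t,\beta^-,\lambda^-)$ via (\ref{distribution-TS-time}), note $\mathbb{E}[X_t]=t\mu$ and ${\rm Var}[X_t]=t\sigma^2$, and apply Theorem~\ref{thm-CLT-TS}, after which the powers of $t$ simplify to give the common factor $1/\sqrt{t}$. The bookkeeping you describe matches the paper's computation step for step.
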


\begin{proof}
Noting that by (\ref{expectation-TS}), (\ref{variance-TS}) and (\ref{distribution-TS-time}) we have $\mathbb{E}[X_t] = t \mu$ and ${\rm Var}[X_t] = t \sigma^2$ for all $t > 0$, applying Theorem~\ref{thm-CLT-TS} yields
\begin{align*}
&\sup_{x \in \mathbb{R}} |G_{X_t}(x) - G_{W_t}(x)| = \sup_{x \in \mathbb{R}} |G_{X_t}(x) - \Phi_{\mu t,\sigma^2 t}(x)| 
\\ &\leq 32 c \bigg[ (1-\beta^+)(2-\beta^+) \frac{(1-\beta^-)t \mu / t^{3/2} \sigma^3 + \lambda^- / t^{1/2} \sigma}{\lambda^+ ((1-\beta^-)\lambda^+ + (1-\beta^+)\lambda^-)} 
\\ &\qquad\quad\, + (1-\beta^-)(2-\beta^-) \frac{(\beta^+ - 1)t\mu / t^{3/2} \sigma^3 + \lambda^+ / t^{1/2} \sigma}{\lambda^-((1-\beta^-)\lambda^+ + (1-\beta^+)\lambda^-)} \bigg]
\\ &= \frac{32 c}{\sqrt{t}} \bigg[ (1-\beta^+)(2-\beta^+) \frac{(1-\beta^-)\mu / \sigma^3 + \lambda^- / \sigma}{\lambda^+ ((1-\beta^-)\lambda^+ + (1-\beta^+)\lambda^-)} 
\\ &\qquad\quad\, + (1-\beta^-)(2-\beta^-) \frac{(\beta^+ - 1)\mu / \sigma^3 + \lambda^+ / \sigma}{\lambda^-((1-\beta^-)\lambda^+ + (1-\beta^+)\lambda^-)} \bigg] \rightarrow 0 \quad \text{for $t \rightarrow \infty$,}
\end{align*}
completing the proof.
\end{proof}

\begin{corollary}
If we choose $\mu \in \mathbb{R}$, $\sigma^2 > 0$ and
\begin{align*}
(\beta^+,\lambda^+;\beta^-,\lambda^-) \in ([0,1) \times (0,\infty))^2
\end{align*}
and choose
\begin{align}\label{cond-alpha-plus}
\alpha^+ &:= \frac{(\lambda^+)^{2-\beta^+} ((1-\beta^-)\mu + \lambda^- \sigma^2)}{\Gamma(1-\beta^+)((1-\beta^-)\lambda^+ + (1-\beta^+)\lambda^-)} > 0,
\\ \label{cond-alpha-minus} \alpha^- &:= \frac{(\lambda^-)^{2-\beta^-} ((\beta^+ - 1)\mu + \lambda^+ \sigma^2)}{\Gamma(1-\beta^-)((1-\beta^-)\lambda^+ + (1-\beta^+)\lambda^-)} > 0,
\end{align}
then for any tempered stable process 
\begin{align*}
X \sim {\rm TS}(\alpha^+,\beta^+,\lambda^+;\alpha^-,\beta^-,\lambda^-)
\end{align*}
estimate (\ref{conv-rate-process}) is valid.
\end{corollary}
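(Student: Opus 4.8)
The plan is to recognize the corollary as a direct specialization of Theorem~\ref{thm-CLT-TS-process}; the only thing to verify is that the prescribed choice of $\alpha^+$ and $\alpha^-$ forces $X_1$ to have mean $\mu$ and variance $\sigma^2$, after which the desired estimate is literally the conclusion of that theorem.

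First I would observe that the formulas (\ref{cond-alpha-plus}), (\ref{cond-alpha-minus}) for $\alpha^+$ and $\alpha^-$ coincide verbatim with the expressions (\ref{sol-alpha-1}), (\ref{sol-alpha-2}) from Lemma~\ref{lemma-det-alpha}, where the free parameters in that lemma are now the prescribed constants $\mu$ and $\sigma^2$. Since by hypothesis $\sigma^2 > 0$ and $\alpha^+, \alpha^- > 0$, the distribution ${\rm TS}(\alpha^+,\beta^+,\lambda^+;\alpha^-,\beta^-,\lambda^-)$ is well defined, and applying the implication $(2) \Rightarrow (1)$ of Lemma~\ref{lemma-det-alpha} to the random variable $X_1$ yields $\mathbb{E}[X_1] = \mu$ and ${\rm Var}[X_1] = \sigma^2$.

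With this identification in place, Theorem~\ref{thm-CLT-TS-process} applies directly to the tempered stable process $X$. Its right-hand side depends only on $\beta^\pm$, $\lambda^\pm$ and on $\mu := \mathbb{E}[X_1]$, $\sigma^2 := {\rm Var}[X_1]$, which by the previous step are exactly the prescribed values; hence estimate (\ref{conv-rate-process}) holds verbatim, and in particular its right-hand side tends to $0$ as $t \to \infty$. This completes the argument.

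There is essentially no analytic obstacle: the entire content is the bookkeeping that the ``free'' quantities $\mu$, $\sigma^2$ in Lemma~\ref{lemma-det-alpha} and in Theorem~\ref{thm-CLT-TS-process} are consistently identified with the constants fixed in the statement, together with the remark that the positivity assumptions on $\alpha^+$, $\alpha^-$ built into the hypothesis guarantee that all invoked distributions and processes are legitimately defined.
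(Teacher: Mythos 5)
Your proposal is correct and follows exactly the paper's own argument: the paper likewise invokes Lemma~\ref{lemma-det-alpha} to conclude $\mathbb{E}[X_1] = \mu$ and ${\rm Var}[X_1] = \sigma^2$ and then applies Theorem~\ref{thm-CLT-TS-process}. Your additional bookkeeping about the consistent identification of $\mu$, $\sigma^2$ and the role of the positivity hypotheses is accurate but does not change the route.
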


\begin{proof}
By Lemma~\ref{lemma-det-alpha} we have $\mathbb{E}[X_1] = \mu$ and ${\rm Var}[X_1] = \sigma^2$. Applying Theorem~\ref{thm-CLT-TS-process} yields the desired estimate (\ref{conv-rate-process}).
\end{proof}

\begin{remark}
Note that conditions (\ref{cond-alpha-plus}), (\ref{cond-alpha-minus}) are always satisfied for $\mu = 0$.
\end{remark}

\section{Laws of large numbers for tempered stable distributions}\label{sec-lln}

In this section, we present laws of large numbers for tempered stable distributions. These results will be useful in order to determine parameters from the observation of a typical trajectory of a tempered stable process.

\begin{proposition}\label{prop-lln}
We have the weak convergence
\begin{align}\label{lln-weak}
{\rm TS}(n^{1-\beta^+}\alpha^+,\beta^+,n \lambda^+;n^{1-\beta^-}\alpha^-,\beta^-,n \lambda^-) \overset{w}{\rightarrow} \delta_{\mu} \quad \text{for $n \rightarrow \infty$,}
\end{align}
where the number $\mu \in \mathbb{R}$ equals the mean
\begin{align*}
\mu = \Gamma(1 - \beta^+) \frac{\alpha^+}{(\lambda^+)^{1 - \beta^+}} - \Gamma(1 - \beta^-) \frac{\alpha^-}{(\lambda^-)^{1 - \beta^-}}.
\end{align*}
\end{proposition}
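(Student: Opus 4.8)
The plan is to deduce this directly from Proposition~\ref{prop-closure}(4): that statement already covers the regime $\alpha_n^\pm,\lambda_n^\pm \to \infty$ with the first-order ratios in (\ref{LLN-cond}) converging, so all that remains is to feed it the right sequences.

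First I would set $\alpha_n^+ := n^{1-\beta^+}\alpha^+$, $\lambda_n^+ := n\lambda^+$, $\alpha_n^- := n^{1-\beta^-}\alpha^-$ and $\lambda_n^- := n\lambda^-$, keeping $\beta^+,\beta^-$ fixed. Since $\beta^\pm \in [0,1)$ forces $1-\beta^\pm > 0$, we get $\alpha_n^\pm \to \infty$, and plainly $\lambda_n^\pm \to \infty$. The exponents are rigged so that
\[
\Gamma(1-\beta^+)\frac{\alpha_n^+}{(\lambda_n^+)^{1-\beta^+}} = \Gamma(1-\beta^+)\frac{n^{1-\beta^+}\alpha^+}{n^{1-\beta^+}(\lambda^+)^{1-\beta^+}} = \Gamma(1-\beta^+)\frac{\alpha^+}{(\lambda^+)^{1-\beta^+}}
\]
is constant in $n$, and likewise for the minus part; hence (\ref{LLN-cond}) holds with the (constant) limits $\mu^+ := \Gamma(1-\beta^+)\frac{\alpha^+}{(\lambda^+)^{1-\beta^+}} \geq 0$ and $\mu^- := \Gamma(1-\beta^-)\frac{\alpha^-}{(\lambda^-)^{1-\beta^-}} \geq 0$.

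Then Proposition~\ref{prop-closure}(4) applies verbatim and yields the weak convergence (\ref{lln-weak}) with limit $\delta_\mu$, $\mu = \mu^+ - \mu^-$, which is exactly the asserted mean. There is no real obstacle here — the whole argument is bookkeeping on the scaling exponents — the only point worth double-checking being that $\beta^\pm < 1$ (not merely $\beta^\pm \le 1$) is what makes $n^{1-\beta^\pm}\alpha^\pm$ diverge. If a self-contained proof were preferred, one could instead observe from (\ref{cumulant-kappa-gamma}) that the $j$-th cumulant of ${\rm TS}(n^{1-\beta^+}\alpha^+,\beta^+,n\lambda^+;n^{1-\beta^-}\alpha^-,\beta^-,n\lambda^-)$ equals $n^{1-j}\kappa_j$, so that $\kappa_1^n \equiv \mu$ while $\kappa_j^n \to 0$ for $j \ge 2$; the geometric-series tail bound via (\ref{est-Gamma-faculty}) used in the proof of Proposition~\ref{prop-closure}(4) then gives $\varphi_n(u) \to e^{iu\mu}$ for all $u \in \mathbb{R}$, and L\'evy's continuity theorem concludes. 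Invoking the earlier proposition is the cleaner route, though.
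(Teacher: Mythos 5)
Your proof is correct, but it takes a different route from the paper's own argument. The paper proves Proposition~\ref{prop-lln} probabilistically: it takes an i.i.d.\ sequence $(X_j)_{j\in\mathbb{N}}$ with $X_j \sim {\rm TS}(\alpha^+,\beta^+,\lambda^+;\alpha^-,\beta^-,\lambda^-)$, uses Lemma~\ref{lemma-TS-consistent} (closedness under convolution plus the scaling rule (\ref{TS-scalar}) with $\rho = 1/n$) to identify ${\rm TS}(n^{1-\beta^+}\alpha^+,\beta^+,n\lambda^+;n^{1-\beta^-}\alpha^-,\beta^-,n\lambda^-)$ as the law of the sample mean $\frac{1}{n}\sum_{j=1}^n X_j$, and then invokes the classical law of large numbers. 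Your route instead verifies the hypotheses of Proposition~\ref{prop-closure}(4) for the sequences $\alpha_n^\pm = n^{1-\beta^\pm}\alpha^\pm$, $\lambda_n^\pm = n\lambda^\pm$, which is an analytic argument running through the characteristic-function estimates of that proposition; the paper explicitly acknowledges this as a valid alternative in the remark immediately following its proof. Your bookkeeping is accurate: the ratio in (\ref{LLN-cond}) is exactly constant in $n$, $\mu^\pm > 0 \geq 0$, and the divergence $\alpha_n^\pm \to \infty$ indeed hinges on $\beta^\pm < 1$. The paper's approach buys a conceptual explanation (the statement literally \emph{is} a law of large numbers for tempered stable sample means, which fits the section's theme of parameter estimation from sample paths), while yours is shorter given that Proposition~\ref{prop-closure} has already been established and avoids introducing the auxiliary i.i.d.\ sequence. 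Your appended self-contained cumulant computation, $\kappa_j^n = n^{1-j}\kappa_j$, is also correct and is essentially a re-derivation of Proposition~\ref{prop-closure}(4) in this special case.
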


\begin{proof}
Let $(X_j)_{j \in \mathbb{N}}$ be an i.i.d. sequence with
\begin{align*}
X_j \sim {\rm TS}(\alpha^+,\beta^+,\lambda^+;\alpha^-,\beta^-,\lambda^-) \quad \text{for $j \in \mathbb{N}$.}
\end{align*}
By (\ref{expectation-TS}) we have $\mathbb{E}[X_j] = \mu$ for all $j \in \mathbb{N}$, and Lemma~\ref{lemma-TS-consistent} yields that
\begin{align*}
\frac{1}{n} \sum_{j=1}^n X_j \sim {\rm TS}(n^{1-\beta^+}\alpha^+,\beta^+,n \lambda^+;n^{1-\beta^-}\alpha^-,\beta^-,n \lambda^-), \quad n \in \mathbb{N}.
\end{align*}
Using the law of large numbers, we deduce the weak convergence (\ref{lln-weak}).
\end{proof}

\begin{remark}
Note that we can alternatively establish the proof of Proposition~\ref{prop-lln} by applying the weak convergence (\ref{weak-LLN}) from Proposition~\ref{prop-closure}.
\end{remark}

In the sequel, we shall establish results in order to determine the parameters from the observation of one typical sample path of a tempered stable process
\begin{align*}
X \sim {\rm TS}(\alpha^+,\beta^+,\lambda^+;\alpha^-,\beta^-,\lambda^-).
\end{align*}
For this, it suffices to treat the case of one-sided tempered stable processes. Indeed, we can decompose $X = X^+ - X^-$, where 
$X^+ \sim {\rm TS}(\alpha^+,\beta^+,\lambda^+)$ and $X^- \sim {\rm TS}(\alpha^-,\beta^-,\lambda^-)$
are two independent one-sided tempered stable subordinators. Since $X_t = \sum_{s \leq t} \Delta X_s$, the observation of a trajectory of $X$ also provides the respective trajectories of $X^+,X^-$, which are given by
\begin{align*}
X_t^+ = \sum_{s \leq t} (\Delta X_s)^+ \quad \text{and} \quad X_t^- = \sum_{s \leq t} (\Delta X_s)^-.
\end{align*}

\begin{remark}
Note that our standing assumption $\beta^+,\beta^- \in [0,1)$ is crucial, because in the infinite variation case an observation of $X$ does not provide the trajectories of the components $X^+$ and $X^-$. 
\end{remark}

Now, let $X \sim {\rm TS}(\alpha,\beta,\lambda)$ be a tempered stable process with $\beta \in [0,1)$. Suppose we observe the process at discrete time points, say $X_{\Delta k}$, $k \in \mathbb{N}$ for some constant $\Delta > 0$. By (\ref{X-Delta-bullet}) we may assume, without loss of generality, that $\Delta = 1$. Setting $m_j := \mathbb{E}[X_1^j]$ for $j=1,2,3$, by the law of large numbers for $n \rightarrow \infty$ we have almost surely
\begin{align*}
\frac{1}{n} \sum_{k=1}^n (X_k - X_{k-1})^j \rightarrow m_j, \quad j=1,2,3.
\end{align*}
Hence, we obtain the moments $m_1,m_2,m_3$ by inspecting a typical sample path of $X$. According to \cite[p.~346]{Lexikon}, the cumulants (\ref{cumulant-one-sided}) are given by
\begin{align*}
\kappa_1 &= m_1,
\\ \kappa_2 &= m_2 - m_1^2,
\\ \kappa_3 &= m_3 - 3 m_1 m_2 + 2 m_1^3.
\end{align*}
By means of the cumulants, we can determine the parameters $\alpha,\beta,\lambda$, as the following result shows:

\begin{proposition}\label{prop-estimate-all}
The parameters $\alpha,\beta,\lambda$ are given by
\begin{align}\label{beta-formula}
\beta &= 1 - \frac{\kappa_2^2}{\kappa_1 \kappa_3 - \kappa_2^2},
\\ \label{lambda-formula} \lambda &= (1-\beta) \frac{\kappa_1}{\kappa_2},
\\ \label{alpha-formula} \alpha &= \frac{\lambda^{1-\beta}}{\Gamma(1-\beta)} \kappa_1.
\end{align}
\end{proposition}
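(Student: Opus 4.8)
The plan is to start from the explicit cumulant formula (\ref{cumulant-one-sided}), namely $\kappa_n = \Gamma(n-\beta)\frac{\alpha}{\lambda^{n-\beta}}$ for $n\in\mathbb{N}$, and to recover the three parameters by forming suitable ratios of $\kappa_1,\kappa_2,\kappa_3$. Writing out the first three cumulants and using $\Gamma(k+1-\beta) = (k-\beta)\Gamma(k-\beta)$, one obtains $\kappa_1 = \Gamma(1-\beta)\alpha\lambda^{\beta-1}$, $\kappa_2 = (1-\beta)\Gamma(1-\beta)\alpha\lambda^{\beta-2}$ and $\kappa_3 = (2-\beta)(1-\beta)\Gamma(1-\beta)\alpha\lambda^{\beta-3}$. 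Since $\alpha,\lambda>0$ and $\Gamma(k-\beta)>0$ for $k\geq 1$ and $\beta\in[0,1)$, all three cumulants are strictly positive, which legitimizes the divisions performed below.

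First I would form the ratios $\kappa_2/\kappa_1 = (1-\beta)/\lambda$ and $\kappa_3/\kappa_2 = (2-\beta)/\lambda$, in which $\alpha$, $\lambda^{\beta}$ and $\Gamma(1-\beta)$ cancel. Dividing these two identities gives $\frac{\kappa_1\kappa_3}{\kappa_2^2} = \frac{2-\beta}{1-\beta}$, and solving for $\beta$ yields $\frac{1}{1-\beta} = \frac{\kappa_1\kappa_3-\kappa_2^2}{\kappa_2^2}$, which is exactly (\ref{beta-formula}). This computation incidentally shows $\kappa_1\kappa_3 - \kappa_2^2 = \kappa_2^2/(1-\beta) > 0$, so the denominator occurring in (\ref{beta-formula}) and (\ref{lambda-formula}) never vanishes. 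Next, from $\kappa_2/\kappa_1 = (1-\beta)/\lambda$ I would solve for $\lambda$ to get (\ref{lambda-formula}), and finally from $\kappa_1 = \Gamma(1-\beta)\alpha\lambda^{\beta-1}$ I would solve for $\alpha$ to get (\ref{alpha-formula}).

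There is essentially no serious obstacle: once (\ref{cumulant-one-sided}) is available the argument is a short chain of algebraic manipulations, and the only point deserving a word of care is the positivity and non-degeneracy of the cumulants, which ensures that the ratios above and the expression for $\beta$ are well defined. Combined with the preceding discussion — in which the moments $m_1,m_2,m_3$, and hence $\kappa_1,\kappa_2,\kappa_3$, are recovered almost surely from the jumps of a single trajectory of the one-sided process — this proposition produces strongly consistent estimators of $\alpha,\beta,\lambda$, and applying it separately to the components $X^+$ and $X^-$ settles the two-sided case.
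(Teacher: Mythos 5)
Your proof is correct and takes essentially the same route as the paper: both solve the system $\kappa_n=\Gamma(n-\beta)\,\alpha\,\lambda^{\beta-n}$, $n=1,2,3$, by eliminating $\alpha$ and $\lambda$ via ratios of consecutive cumulants and then reading off $\beta$, $\lambda$, $\alpha$ in turn. Your intermediate identity $\kappa_1\kappa_3=\frac{2-\beta}{1-\beta}\,\kappa_2^2$ is the correct relation underlying (\ref{beta-formula}) (and it also gives the positivity $\kappa_1\kappa_3>\kappa_2^2$); the paper's own proof writes this step as $\kappa_1\kappa_3=(2-\beta)\kappa_2^2$, which is a typo inconsistent with (\ref{beta-formula}), so your version is in fact the cleaner one on this point.
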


\begin{proof}
According to (\ref{cumulant-one-sided}) we have
\begin{align}\label{system-1}
\Gamma(1-\beta) \alpha &= \kappa_1 \lambda^{1-\beta},
\\ \label{system-2} (1-\beta) \Gamma(1-\beta) \alpha &= \kappa_2 \lambda^{2-\beta},
\\ \label{system-3} (2-\beta)(1-\beta) \Gamma(1-\beta) \alpha &= \kappa_3 \lambda^{3-\beta}.
\end{align}
Equation (\ref{system-1}) yields (\ref{alpha-formula}), and inserting (\ref{alpha-formula}) into (\ref{system-2}) gives us (\ref{lambda-formula}). Note that
\begin{align}\label{kappa-ineqn}
\kappa_1 \kappa_3 > \kappa_2^2.
\end{align}
Indeed, by (\ref{cumulant-one-sided}) we have
\begin{align*}
\kappa_1 \kappa_3 &= \Gamma(1-\beta) \frac{\alpha}{\lambda^{1-\beta}} \Gamma(3-\beta) \frac{\alpha}{\lambda^{3-\beta}} = (2-\beta) \bigg( \Gamma(1-\beta) \Gamma(2-\beta) \frac{\alpha}{\lambda^{2-\beta}} \bigg)^2 
\\ &= (2-\beta) \kappa_2^2 > \kappa_2^2,
\end{align*}
because $\beta \in [0,1)$. Inserting (\ref{alpha-formula}) into (\ref{system-3}) we arrive at (\ref{beta-formula}).
\end{proof}

\begin{remark}
Note that (\ref{kappa-ineqn}) ensures that the right-hand side of (\ref{beta-formula}) is well-defined. We emphasize that for $\beta \in [1,2)$ the parameters $\alpha,\beta,\gamma$ cannot be computed by means of the first three cumulants $\kappa_1,\kappa_2,\kappa_3$, because $\kappa_1$ does not depend on the parameters, see Remark~\ref{remark-cumulant-inf-var}. For analogous identities in this case, we would rather consider the cumulants $\kappa_2,\kappa_3,\kappa_4$.
\end{remark}

For the next result, we suppose that $\beta \in (0,1)$. Let $\varphi_{\beta} : \mathbb{R}_+ \rightarrow (0,\infty)$ be the strictly decreasing function
\begin{align*}
\varphi_{\beta}(x) = (1 + \beta x)^{-\frac{1}{\beta}}, \quad x \in \mathbb{R}_+.
\end{align*}
Let $N$ be the random measure associated to the jumps of $X$. Then $N$ is a homogeneous Poisson random measure with compensator $dt \otimes F(dx)$. We define the random variables $(Y_n)_{n \in \mathbb{N}}$ as
\begin{align*}
Y_n := N((0,1] \times (\varphi_{\beta}(n+1),\varphi_{\beta}(n)]), \quad n \in \mathbb{N}.
\end{align*}

\begin{proposition}\label{prop-estimate-alpha}
We have almost surely the convergence
\begin{align}\label{as-convergence}
\frac{1}{n} \sum_{j=1}^n Y_j \rightarrow \alpha.
\end{align}
\end{proposition}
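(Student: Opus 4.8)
The plan is to read off the joint law of $(Y_n)_{n\in\mathbb N}$ from the structure of the Poisson random measure $N$, determine the asymptotics of its intensities, and then apply a strong law of large numbers for independent, non-identically distributed summands. Since $N$ is a Poisson random measure on $(0,\infty)\times(0,\infty)$ with intensity $dt\otimes F(dx)$ and the sets $(0,1]\times(\varphi_\beta(n+1),\varphi_\beta(n)]$, $n\in\mathbb N$, are pairwise disjoint (because $\varphi_\beta$ is strictly decreasing), the random variables $(Y_n)_{n\in\mathbb N}$ are independent, and $Y_n$ is Poisson distributed with parameter
\begin{align*}
\mu_n := F\big((\varphi_\beta(n+1),\varphi_\beta(n)]\big) = \int_{\varphi_\beta(n+1)}^{\varphi_\beta(n)} \frac{\alpha}{x^{1+\beta}}\,e^{-\lambda x}\,dx .
\end{align*}

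The crucial observation is that $\varphi_\beta$ is chosen precisely so that the pure $\beta$-stable part $\frac{\alpha}{x^{1+\beta}}\,dx$ charges every interval $(\varphi_\beta(n+1),\varphi_\beta(n)]$ with the same mass $\alpha$: since $\varphi_\beta(n)^{-\beta}=1+\beta n$, we have
\begin{align*}
\int_{\varphi_\beta(n+1)}^{\varphi_\beta(n)} \frac{\alpha}{x^{1+\beta}}\,dx = \frac{\alpha}{\beta}\big(\varphi_\beta(n+1)^{-\beta}-\varphi_\beta(n)^{-\beta}\big) = \frac{\alpha}{\beta}\big((1+\beta(n+1))-(1+\beta n)\big) = \alpha .
\end{align*}
Using that $x\mapsto e^{-\lambda x}$ is decreasing on $(\varphi_\beta(n+1),\varphi_\beta(n)]$, this yields the sandwich $\alpha\,e^{-\lambda\varphi_\beta(n)}\le\mu_n\le\alpha\,e^{-\lambda\varphi_\beta(n+1)}$; since $\varphi_\beta(n)\downarrow 0$ as $n\to\infty$, both bounds tend to $\alpha$, so $\mu_n\to\alpha$.

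It then remains to combine $\mu_n\to\alpha$ with Kolmogorov's strong law of large numbers for independent summands. Since $e^{-\lambda x}\le 1$ we have $\mu_n\le\alpha$ for every $n$, so ${\rm Var}(Y_n)=\mu_n$ satisfies $\sum_{n\ge 1}{\rm Var}(Y_n)/n^2\le\alpha\sum_{n\ge 1}n^{-2}<\infty$. Kolmogorov's criterion gives $\frac1n\sum_{j=1}^n(Y_j-\mu_j)\to 0$ almost surely, while the Cesàro lemma applied to the convergent sequence $(\mu_j)$ gives $\frac1n\sum_{j=1}^n\mu_j\to\alpha$. Adding the two convergences yields $\frac1n\sum_{j=1}^nY_j\to\alpha$ almost surely, which is (\ref{as-convergence}).

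The only substantive step is the evaluation of the intensities $\mu_n$ and their limit; once the role of $\varphi_\beta$ (making the stable mass of consecutive intervals constant) is recognized, the remainder is a routine application of the SLLN for independent, non-i.i.d.\ variables together with Cesàro's lemma, and I do not expect any real obstacle beyond keeping the sandwich estimate clean.
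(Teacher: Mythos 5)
Your proof is correct and follows essentially the same route as the paper: independence and Poissonianity of the $Y_n$, convergence of the intensities $\mu_n\to\alpha$, Kolmogorov's strong law for independent non-i.i.d.\ summands, and Ces\`{a}ro. The only difference is cosmetic: the paper obtains $\mu_n\to\alpha$ via the substitution $x=\varphi_\beta(y)$ and dominated convergence, while you evaluate the stable mass of each interval exactly (equal to $\alpha$ by the choice of $\varphi_\beta$) and sandwich the tempering factor $e^{-\lambda x}$, which is a slightly more elementary way to reach the same limit.
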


\begin{proof}
The random variables $(Y_n)_{n \in \mathbb{N}}$ are independent and have a Poisson distribution with mean $\alpha_n = F((\varphi_{\beta}(n+1),\varphi_{\beta}(n)])$ for $n \in \mathbb{N}$. The function $\varphi_{\beta}$ has the derivative
\begin{align*}
\varphi_{\beta}'(x) = -\frac{1}{\beta} (1 + \beta x)^{- \frac{1}{\beta} - 1} \beta = -(1 + \beta x)^{-\frac{1 + \beta}{\beta}}, \quad x \in \mathbb{R}_+
\end{align*}
and thus, by substitution and Lebesgue's dominated convergence theorem we obtain
\begin{align*}
\alpha_n &= F((\varphi_{\beta}(n+1),\varphi_{\beta}(n)]) = \alpha \int_{\varphi_{\beta}(n+1)}^{\varphi_{\beta}(n)} \frac{e^{-\lambda x}}{x^{1 + \beta}} dx = \alpha \int_{n+1}^n \frac{e^{-\lambda \varphi_{\beta}(y)}}{\varphi_{\beta}(y)^{1 + \beta}} \varphi_{\beta}'(y) dy
\\ &= \alpha \int_{n}^{n+1} \frac{e^{-\lambda \varphi_{\beta}(y)}}{(1 + \beta y)^{- \frac{1 + \beta}{\beta}}} (1 + \beta y)^{-\frac{1 + \beta}{\beta}} dy = \alpha \int_{n}^{n+1} e^{-\lambda \varphi_{\beta}(y)} dy 
\\ &= \alpha \int_0^1 e^{-\lambda \varphi_{\beta}(n + y)} dy \rightarrow \alpha \quad \text{for $n \rightarrow \infty$.}
\end{align*}
Consequently, we have
\begin{align*}
\sum_{n=1}^{\infty} \frac{{\rm Var}[Y_n]}{n^2} = \sum_{n=1}^{\infty} \frac{\alpha_n}{n^2} < \infty. 
\end{align*}
Set $S_n = Y_1 + \ldots + Y_n$ for $n \in \mathbb{N}$. Applying Kolmogorov's strong law of large numbers, see \cite[Thm.~IV.3.2]{Shiryaev}, yields almost surely
\begin{align*}
\frac{S_n - \mathbb{E}[S_n]}{n} \rightarrow 0.
\end{align*}
Since we have
\begin{align*}
\frac{\mathbb{E}[S_n]}{n} = \frac{\alpha_1 + \ldots + \alpha_n}{n} \rightarrow \alpha,
\end{align*}
we arrive at the almost sure convergence (\ref{as-convergence}).
\end{proof}

\begin{remark}\label{remark-estimate-alpha}
Let us consider the situation $\beta = 0$. For all $x \in \mathbb{R}_+$ we have
\begin{align*}
\lim_{\beta \rightarrow 0} \varphi_{\beta}(x) = \lim_{\beta \rightarrow 0} (1 + \beta x)^{- \frac{1}{\beta}} = e^{-x}.
\end{align*}
This suggests to take the sequence
\begin{align*}
Y_n := N((0,1] \times (e^{-(n+1)},e^{-n}]), \quad n \in \mathbb{N}
\end{align*}
for a Gamma process $X \sim \Gamma(\alpha,\lambda)$, and then, an analogous result is indeed true, see \cite[Thm.~7.1]{Kuechler-Tappe}.
\end{remark}

\begin{remark}
Proposition~\ref{prop-estimate-alpha} and Remark~\ref{remark-estimate-alpha} show how we can determine the parameters $\alpha, \lambda > 0$ by inspecting a typical sample path of $X$, provided that $\beta \in [0,1)$ is known. First, we determine $\alpha > 0$ according to Proposition~\ref{prop-estimate-alpha} or Remark~\ref{remark-estimate-alpha}. By the strong law of large numbers we have almost surely $X_n / n \rightarrow \mu$, where $\mu > 0$ denotes the expectation
\begin{align*}
\mu = \Gamma(1 - \beta) \frac{\alpha}{\lambda^{1 - \beta}}.
\end{align*}
Now, we obtain the parameter $\lambda > 0$ as
\begin{align*}
\lambda = \bigg( \frac{\alpha \Gamma(1 - \beta)}{\mu} \bigg)^{\frac{1}{1 - \beta}}.
\end{align*}
\end{remark}

\section{Statistics of tempered stable
distributions}\label{sec-statistics-distributions}

This section is devoted to parameter estimation for tempered stable distributions. Let $X_1,\ldots,X_n$ be an i.i.d. sequence with
\begin{align*}
X_k \sim {\rm TS}(\alpha^+,\beta^+,\lambda^+;\alpha^-,\beta^-,\lambda^-) \quad \text{for $k = 1,\ldots,n$.}
\end{align*}
Suppose we observe a realization $x_1,\ldots,x_n$. We would like to estimate the vector of parameters 
\begin{align*}
\vartheta = (\vartheta_1,\ldots,\vartheta_6) = (\alpha^+,\beta^+,\lambda^+,\alpha^-,\beta^-,\lambda^-) \in D, 
\end{align*}
where the parameter domain $D$ is the open set
\begin{align*}
D = ((0,\infty) \times (0,1) \times (0,\infty))^2.
\end{align*}
For bilateral Gamma distributions (i.e. $\beta^+ = \beta^- = 0$) parameter estimation was provided in \cite{Kuechler-Tappe}.
We perform the method of moments and estimate the $k$-th moments $m_j = \mathbb{E} [X_1^j]$ as
\begin{align*}
\hat{m}_j = \frac{1}{n} \sum_{k=1}^n x_k^j, \quad j=1,\ldots,6.
\end{align*}
According to (\ref{cumulant-kappa-gamma}) the vector 
\begin{align*}
\kappa = (\kappa_1,\ldots,\kappa_6) \in (\mathbb{R} \times (0,\infty))^3
\end{align*}
of cumulants is given by
\begin{align}\label{cumulants-stat}
\kappa_j = \Gamma(j - \beta^+) \frac{\alpha^+}{(\lambda^+)^{j - \beta^+}} + (-1)^j \Gamma(j - \beta^-) \frac{\alpha^-}{(\lambda^-)^{j - \beta^-}}, \quad j = 1,\ldots,6.
\end{align}
Using \cite[p.~346]{Lexikon} we estimate $\kappa$ as the vector
\begin{align*}
\hat{\kappa} = (\hat{\kappa}_1,\ldots,\hat{\kappa}_6) \in (\mathbb{R} \times (0,\infty))^3
\end{align*}
with components
\begin{align*}
\hat{\kappa}_1 &= \hat{m}_1,
\\ \hat{\kappa}_2 &= \hat{m}_2 - \hat{m}_1^2,
\\ \hat{\kappa}_3 &= \hat{m}_3 - 3 \hat{m}_1 \hat{m}_2 + 2 \hat{m}_1^3,
\\ \hat{\kappa}_4 &= \hat{m}_4 - 4 \hat{m}_1 \hat{m}_3 - 3 \hat{m}_2^2 + 12 \hat{m}_1^2 \hat{m}_2 - 6 \hat{m}_1^4,
\\ \hat{\kappa}_5 &= \hat{m}_5 - 5 \hat{m}_1 \hat{m}_4 - 10 \hat{m}_2 \hat{m}_3 + 20 \hat{m}_1^2 \hat{m}_3 + 30 \hat{m}_1 \hat{m}_2^2 - 60 \hat{m}_1^3 \hat{m}_2 + 24 \hat{m}_1^5,
\\ \hat{\kappa}_6 &= \hat{m}_6 - 6 \hat{m}_1 \hat{m}_5 - 15 \hat{m}_2 \hat{m}_4 + 30 \hat{m}_1^2 \hat{m}_4 - 10 \hat{m}_3^2 + 120 \hat{m}_1 \hat{m}_2 \hat{m}_3 
\\ &\quad - 120 \hat{m}_1^3 \hat{m}_3 + 30 \hat{m}_2^3 - 270 \hat{m}_1^2 \hat{m}_2^2 + 360 \hat{m}_1^4 \hat{m}_2 - 120 \hat{m}_1^6.
\end{align*}
and the function $G : (\mathbb{R} \times (0,\infty))^3 \times D \rightarrow \mathbb{R}^6$ as
\begin{align*}
G_j(c,\hat{\vartheta}) &:= \Gamma(j-\hat{\beta}^+) \hat{\alpha}^+ (\hat{\lambda}^-)^{j-\hat{\beta}^-} + (-1)^j \Gamma(j - \hat{\beta}^-) \hat{\alpha}^- (\hat{\lambda}^+)^{j - \hat{\beta}^+}
\\ &\quad - c_j (\hat{\lambda}^+)^{j - \hat{\beta}^+} (\hat{\lambda}^-)^{j - \hat{\beta}^-}, \quad j = 1,\ldots,6,
\end{align*}
where
\begin{align*}
c = (c_1,\ldots,c_6) \quad \text{and} \quad \hat{\vartheta} = (\hat{\alpha}^+,\hat{\beta}^+,\hat{\lambda}^+,\hat{\alpha}^-,\hat{\beta}^-,\hat{\lambda}^-).
\end{align*}
In order to obtain an estimate $\hat{\vartheta}$ for $\vartheta$ we solve the equation
\begin{align}\label{equation}
G(\hat{\kappa},\hat{\vartheta}) = 0, \quad \hat{\vartheta} \in D.
\end{align}
Let us have a closer look at equation (\ref{equation}) concerning existence and uniqueness of solutions. For the following calculations, we have used the Computer Algebra System ``Maxima''.

\begin{lemma}\label{lemma-det}
We have $G \in C^1(D;\mathbb{R}^6)$ and for all $\vartheta \in D$ and $\kappa = \kappa(\vartheta)$ given by (\ref{cumulants-stat}) we have $G(\kappa,\vartheta) = 0$ and $\det \frac{\partial G}{\partial \vartheta}(\kappa,\vartheta) > 0$.
\end{lemma}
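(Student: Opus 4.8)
The plan is to dispatch the regularity of $G$ and the identity $G(\kappa,\vartheta)=0$ directly, and then to reduce the determinant inequality to a sign count for the Jacobian of the cumulant map. For the regularity, observe that $G$ is assembled from $\Gamma$, from $\Gamma'$, from the power maps $(\hat\lambda^\pm)^{j-\hat\beta^\pm}=\exp\!\big((j-\hat\beta^\pm)\ln\hat\lambda^\pm\big)$ and from products. Since $\hat\beta^+,\hat\beta^-\in(0,1)$ and $j\ge1$, the arguments $j-\hat\beta^\pm$ lie in $(j-1,j)\subset(0,\infty)$, away from the poles $\{0,-1,-2,\dots\}$ of $\Gamma$, where $\Gamma$ is real-analytic; and the power maps are smooth because their bases are positive. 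Hence $G$ is jointly $C^\infty$ on $(\mathbb R\times(0,\infty))^3\times D$, so in particular, for each fixed $c$, the map $\hat\vartheta\mapsto G(c,\hat\vartheta)$ lies in $C^1(D;\mathbb R^6)$. For the identity I would multiply (\ref{cumulants-stat}) through by $(\hat\lambda^+)^{j-\hat\beta^+}(\hat\lambda^-)^{j-\hat\beta^-}$, which gives
\begin{align*}
G_j(c,\hat\vartheta)=(\hat\lambda^+)^{j-\hat\beta^+}(\hat\lambda^-)^{j-\hat\beta^-}\big(\kappa_j(\hat\vartheta)-c_j\big),\qquad j=1,\dots,6,
\end{align*}
so $G(\kappa(\vartheta),\vartheta)=0$ on putting $c=\kappa(\vartheta)$ and $\hat\vartheta=\vartheta$.

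For the Jacobian I would exploit the same factorisation: differentiating in $\hat\vartheta_k$ produces a summand in which $(\hat\lambda^+)^{j-\hat\beta^+}(\hat\lambda^-)^{j-\hat\beta^-}$ is differentiated, and this summand carries the factor $\kappa_j(\hat\vartheta)-c_j$, which vanishes at $c=\kappa(\hat\vartheta)$. Hence
\begin{align*}
\frac{\partial G}{\partial\vartheta}(\kappa(\vartheta),\vartheta)=\operatorname{diag}\Big((\lambda^+)^{j-\beta^+}(\lambda^-)^{j-\beta^-}\Big)_{1\le j\le6}\cdot\frac{\partial\kappa}{\partial\vartheta}(\vartheta),
\end{align*}
and since the diagonal prefactor is positive it suffices to show $\det\frac{\partial\kappa}{\partial\vartheta}(\vartheta)>0$, where $\kappa=(\kappa_1,\dots,\kappa_6)$ is regarded as a map of $\vartheta=(\alpha^+,\beta^+,\lambda^+,\alpha^-,\beta^-,\lambda^-)$ via (\ref{cumulants-stat}).

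To treat this determinant I would differentiate (\ref{cumulants-stat}) explicitly — $\partial_{\alpha^+}\kappa_j=\Gamma(j-\beta^+)(\lambda^+)^{\beta^+-j}$, $\partial_{\lambda^+}\kappa_j=-\alpha^+(j-\beta^+)\Gamma(j-\beta^+)(\lambda^+)^{\beta^+-j-1}$, $\partial_{\beta^+}\kappa_j=\alpha^+\Gamma(j-\beta^+)(\lambda^+)^{\beta^+-j}\big(\ln\lambda^+-\psi(j-\beta^+)\big)$ with $\psi=\Gamma'/\Gamma$ the digamma function, and likewise for $\alpha^-,\lambda^-,\beta^-$ with an extra factor $(-1)^j$ — then pull the scalars $\alpha^\pm$ out of the $\beta^\pm$-columns and $-\alpha^\pm/\lambda^\pm$ out of the $\lambda^\pm$-columns (a net positive factor), and subtract $\ln\lambda^\pm$ times the $\alpha^\pm$-column from the $\beta^\pm$-column. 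This brings $\det\frac{\partial\kappa}{\partial\vartheta}(\vartheta)$, up to a strictly positive factor, to the determinant of the $6\times6$ matrix whose $j$-th row is $\Gamma(j-\beta^+)(\lambda^+)^{\beta^+-j}\big(1,\psi(j-\beta^+),j-\beta^+\big)$ followed by $(-1)^j\Gamma(j-\beta^-)(\lambda^-)^{\beta^--j}\big(1,\psi(j-\beta^-),j-\beta^-\big)$. Expanding by the generalised Laplace rule along the first three columns, each $3\times3$ minor factors into manifestly positive scalars times a determinant $\det\big[(1,\psi(x_i),x_i)\big]_{1\le i\le3}$ with $x_1<x_2<x_3$, and row reduction gives
\begin{align*}
\det\begin{pmatrix}1&\psi(x_1)&x_1\\1&\psi(x_2)&x_2\\1&\psi(x_3)&x_3\end{pmatrix}=(x_2-x_1)(x_3-x_1)\left(\frac{\psi(x_2)-\psi(x_1)}{x_2-x_1}-\frac{\psi(x_3)-\psi(x_1)}{x_3-x_1}\right)>0,
\end{align*}
the positivity because $\psi$ is strictly concave on $(0,\infty)$, so its difference quotients from a fixed left endpoint strictly decrease. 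Thus every minor in the expansion has a definite sign, and combining the Laplace signs with the $(-1)^j$ row factors of the ``$-$''-block pins down $\det\frac{\partial\kappa}{\partial\vartheta}(\vartheta)$, hence $\det\frac{\partial G}{\partial\vartheta}(\kappa,\vartheta)$.

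The main obstacle is precisely this last step: the sign accounting in the Laplace expansion, equivalently a closed-form factorisation of $\det\frac{\partial G}{\partial\vartheta}(\kappa,\vartheta)$ into signed pieces. In practice I would carry this out with the computer algebra system already in use; the structural ingredient that makes the factorisation come out signed is exactly the concavity of the digamma function recorded above. Everything preceding it — the regularity, the algebraic identity, and the reduction of the $6\times6$ Jacobian to the above block form — is routine.
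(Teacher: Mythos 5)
Your reduction is genuinely different from the paper's proof, and the first three quarters of it are correct and more illuminating than what the paper does: the paper computes all thirty-six partial derivatives of $G$ at $(\kappa,\vartheta)$, has Maxima expand the $6\times 6$ determinant into an explicit polynomial identity, and then checks positivity of three auxiliary polynomials by plotting. Your factorisation $G_j(c,\hat\vartheta)=(\hat\lambda^+)^{j-\hat\beta^+}(\hat\lambda^-)^{j-\hat\beta^-}\big(\kappa_j(\hat\vartheta)-c_j\big)$ is exactly right, it gives the regularity and $G(\kappa,\vartheta)=0$ at once, and it shows that on the zero set the Jacobian of $G$ is a positive diagonal matrix times $\frac{\partial\kappa}{\partial\vartheta}$. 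Your column operations, the reduction of each $3\times3$ minor to $\det[(1,\psi(x_i),x_i)]$, and the use of the strict concavity of $\psi$ to get positivity of these minors are also correct.

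The genuine gap is precisely the step you defer to the computer: the sign bookkeeping in the Laplace expansion is the entire content of the inequality, it can be done in one line, and it comes out with the \emph{opposite} sign to the one the lemma asserts. In the expansion of the reduced matrix along columns $\{1,2,3\}$, the term indexed by a row set $I$ with $|I|=3$ carries the Laplace sign $(-1)^{s(I)+s(\{1,2,3\})}$ times the factor $(-1)^{s(I^c)}$ coming from the $(-1)^j$ in the rows of the minus block, where $s(S)=\sum_{i\in S}i$. Since $s(I)+s(I^c)=21$ and $s(\{1,2,3\})=6$, every one of the twenty terms equals $(-1)^{27}=-1$ times a product of two of your positive minors; hence your argument proves $\det\frac{\partial G}{\partial\vartheta}(\kappa,\vartheta)<0$ uniformly on $D$, not $>0$. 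A direct check confirms this is not a bookkeeping slip on your side: at $\alpha^\pm=1$, $\beta^\pm=\tfrac12$, $\lambda^\pm=1$ the Jacobian row $j$ is $\Gamma(j-\tfrac12)\big(1,-\psi(j-\tfrac12),-(j-\tfrac12),(-1)^j,-(-1)^j\psi(j-\tfrac12),-(-1)^j(j-\tfrac12)\big)$; adding the first three columns to the last three annihilates the odd rows in the right block, and the determinant collapses to $-8\prod_{j=1}^6\Gamma(j-\tfrac12)$ times the product of the two positive determinants $\det[(1,\psi(x_i),x_i)]$ over $x\in\{\tfrac12,\tfrac52,\tfrac92\}$ and $x\in\{\tfrac32,\tfrac72,\tfrac{11}2\}$, i.e.\ approximately $-1.5\times10^5$. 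So you should finish the sign count by hand and state the conclusion as $\det\frac{\partial G}{\partial\vartheta}(\kappa,\vartheta)<0$ — which is all the subsequent implicit function theorem application needs — rather than trusting a CAS at exactly the point where the published formula appears to have lost a sign.
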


\begin{proof}
The definition of $G$ shows that $G \in C^1(D;\mathbb{R}^6)$.
Let $\vartheta \in D$ be arbitrary and let $\kappa = \kappa(\vartheta)$ be given by (\ref{cumulants-stat}). The identity (\ref{cumulants-stat}) yields that $G(\kappa,\vartheta) = 0$. Computing the partial derivatives of $G$ and inserting the vector $(\kappa,\vartheta)$, for $j=1,\ldots,6$ we obtain
\begin{align*}
\frac{\partial G_j}{\partial \alpha^+}(\kappa,\vartheta) &= g_1 (\lambda^-)^{j-1} \prod_{k=1}^{j-1}(k-\beta^+),
\\ \frac{\partial G_j}{\partial \alpha^-}(\kappa,\vartheta) &= g_2 (-1)^j (\lambda^+)^{j-1} \prod_{k=1}^{j-1} (k - \beta^-),
\\ \frac{\partial G_j}{\partial \beta^+}(\kappa,\vartheta) &= g_3 (\lambda^-)^{j-1} \prod_{k=1}^{j-1} (k-\beta^+) 
\bigg( \ln \lambda^+ - \psi(1-\beta^+) - \sum_{k=0}^{j-2} \frac{1}{1 - \beta^+ + k} \bigg),
\\ \frac{\partial G_j}{\partial \beta^-}(\kappa,\vartheta) &= g_4 (-1)^j (\lambda^+)^{j-1} \prod_{k=1}^{j-1} (k-\beta^-) 
\bigg( \ln \lambda^- - \psi(1-\beta^-) - \sum_{k=0}^{j-2} \frac{1}{1 - \beta^- + k} \bigg),
\\ \frac{\partial G_j}{\partial \lambda^+}(\kappa,\vartheta) &= g_5 (\lambda^-)^{j-1} \prod_{k=2}^j (k-\beta^+),
\\ \frac{\partial G_j}{\partial \lambda^-}(\kappa,\vartheta) &= g_6 (-1)^j (\lambda^+)^{j-1} \prod_{k=2}^j (k-\beta^-),
\end{align*}
where $\psi : (0,\infty) \rightarrow \mathbb{R}$ denotes the Digamma function
\begin{align*}
\psi(x) = \frac{d}{dx} \ln \Gamma(x) = \frac{\Gamma'(x)}{\Gamma(x)}, \quad x \in (0,\infty)
\end{align*}
and where the $g_i$, $i = 1,\ldots,6$ are given by
\begin{align*}
g_1 &= (\lambda^-)^{1-\beta^-} \Gamma(1-\beta^+),
\\ g_2 &= (\lambda^+)^{1-\beta^+} \Gamma(1-\beta^-),
\\ g_3 &= \alpha^+ (\lambda^-)^{1-\beta^-} \Gamma(1-\beta^+),
\\ g_4 &= \alpha^- (\lambda^+)^{1-\beta^+} \Gamma(1-\beta^-),
\\ g_5 &= - \alpha^+ (\lambda^+)^{-1} (\lambda^-)^{1-\beta^-} \Gamma(2-\beta^+),
\\ g_6 &= - \alpha^- (\lambda^-)^{-1} (\lambda^+)^{1-\beta^+} \Gamma(2-\beta^-).
\end{align*}
Let $p_1$ be the polynomial
\begin{align*}
p_1(\beta^+,\beta^-) &= - 18 (\beta^+)^3 (\beta^-)^2 + 84 (\beta^+)^3 \beta^- + 168 (\beta^+)^2 (\beta^-)^2 
\\ &\quad - 99 (\beta^+)^3 - 766 (\beta^+)^2 \beta^- - 507 \beta^+ (\beta^-)^2 
\\ &\quad + 885 (\beta^+)^2 + 2243 \beta^+ \beta^- + 504 (\beta^-)^2 - 2522 \beta^+ - 2160 \beta^- + 2356,
\end{align*}
let $p_2$ be the polynomial
\begin{align*}
p_2(\beta^+,\beta^-) &= - 14 (\beta^+)^4 (\beta^-)^3 + 98 (\beta^+)^4 (\beta^-)^2 + 126 (\beta^+)^3 (\beta^-)^3 
\\ &\quad - 227 (\beta^+)^4 \beta^- - 852 (\beta^+)^3 (\beta^-)^2 - 391 (\beta^+)^2 (\beta^-)^3 
\\ &\quad + 175 (\beta^+)^4 + 1908 (\beta^+)^3 \beta^- + 2542 (\beta^+)^2 (\beta^-)^2 + 499 \beta^+ (\beta^-)^3 
\\ &\quad - 1422 (\beta^+)^3 - 5508 (\beta^+)^2 \beta^- - 3076 \beta^+ (\beta^-)^2 - 232 (\beta^-)^3 
\\ &\quad + 3989 (\beta^+)^2 + 6395 \beta^+ \beta^- + 1336 (\beta^-)^2 
- 4490 \beta^+ - 2628 \beta^- + 1772
\end{align*}
and let $p_3$ be the polynomial
\begin{align*}
p_3(\beta^+,\beta^-) &= - 63 (\beta^+)^4 (\beta^-)^3 + 462 (\beta^+)^4 (\beta^-)^2 + 609 (\beta^+)^3 (\beta^-)^3 
\\ &\quad - 1092 (\beta^+)^4 \beta^- - 4286 (\beta^+)^3 (\beta^-)^2 - 2112 (\beta^+)^2 (\beta^-)^3 
\\ &\quad + 837 (\beta^+)^4 + 9686 (\beta^+)^3 \beta^- + 14198 (\beta^+)^2 (\beta^-)^2 + 3111 \beta^+ (\beta^-)^3 
\\ &\quad - 7185 (\beta^+)^3 - 30373 (\beta^+)^2 \beta^- - 19850 \beta^+ (\beta^-)^2 - 1689 (\beta^-)^3 
\\ &\quad + 21587 (\beta^+)^2 + 39598 \beta^+ \beta^- + 10244 (\beta^-)^2 
\\ &\quad - 26507 \beta^+ - 18923 \beta^- + 11748.
\end{align*}
A plot with the Computer Algebra System ``Maxima'' shows that $p_i > 0$ on $(0,1)^2$ for $i=1,2,3$, and so we obtain
\begin{align*}
&\det \frac{\partial G}{\partial \vartheta}(\kappa,\vartheta) = g_1 \cdots g_6 (\lambda^+ \lambda^-)^3 \Big[ (1-\beta^+)^2 (2-\beta^+)^3 (3-\beta^+)^3 (4-\beta^+) (\lambda^-)^9
\\ &\quad + (1-\beta^+)^2 (2-\beta^+)^3 (3-\beta^+) (4-\beta^+) (7 - 3\beta^-) (11 - 3 \beta^+) \lambda^+ (\lambda^-)^8
\\ &\quad + 2 (1 - \beta^+)^2 (2 - \beta^+) (3 - \beta^+) p_1(\beta^+,\beta^-) (\lambda^+)^2 (\lambda^-)^7
\\ &\quad + 6 (2 - \beta^+) (3 - \beta^+) p_2(\beta^+,\beta^-) (\lambda^+)^3 (\lambda^-)^6
\\ &\quad + 2 (2 - \beta^+) (2 - \beta^-) p_3(\beta^+,\beta^-) (\lambda^+)^4 (\lambda^-)^5
\\ &\quad + 2 (2 - \beta^-) (2 - \beta^+) p_3(\beta^-,\beta^+) (\lambda^+)^5 (\lambda^-)^4
\\ &\quad + 6 (2 - \beta^-) (3 - \beta^-) p_2(\beta^-,\beta^+) (\lambda^+)^6 (\lambda^-)^3
\\ &\quad + 2 (1 - \beta^-)^2 (2 - \beta^-) (3 - \beta^-) p_1(\beta^-,\beta^+) (\lambda^+)^7 (\lambda^-)^2
\\ &\quad + (1 - \beta^-)^2 (2 - \beta^-)^3 (3 - \beta^-) (4 - \beta^-) (7 - 3 \beta^+) (11 - 3 \beta^-) (\lambda^+)^8 \lambda^-
\\ &\quad + (1- \beta^-)^2 (2 - \beta^-)^3 (3 - \beta^-)^3 (4 - \beta^-) (\lambda^+)^9 \Big] > 0,
\end{align*}
finishing the proof.
\end{proof}

Taking into account Lemma~\ref{lemma-det}, by the implicit function theorem (see, e.g., \cite[Thm.~8.1]{Zorich}) there exist an open neighborhood $U_{\kappa} \subset (\mathbb{R} \times (0,\infty))^3$ of $\kappa$, an open neighborhood $U_{\vartheta} \subset D$ of $\vartheta$ and a function $g \in C^1(U_{\kappa};U_{\vartheta})$ such for all $(\hat{\kappa},\hat{\vartheta}) \in U_{\kappa} \times U_{\vartheta}$
we have
\begin{align*}
G(\hat{\kappa},g(\hat{\kappa})) = 0 \quad \text{if and only if} \quad \hat{\vartheta} = g(\hat{\kappa}).
\end{align*}
Recall that $n \in \mathbb{N}$ denotes the number of observations of the realization. If $n$ is large enough, then,
by the law of large numbers, we have $\hat{\kappa} \in U_{\kappa}$, and hence $\hat{\vartheta} := g(\hat{\kappa})$ is the
unique $U_{\vartheta}$-valued solution for (\ref{equation}). This is our estimate for the vector $\vartheta$ of parameters.

\section{Analysis of the densities of tempered stable distributions}\label{sec-densities}

In this section, we shall derive structural properties of the densities of tempered stable distributions. This concerns unimodality, smoothness of the densities and their asymptotic behaviour.

First, we deal with one-sided tempered stable distributions. Let
$\eta = {\rm TS}(\alpha,\lambda,\beta)$
be a one-sided tempered stable distribution with $\beta \in (0,1)$. Note that for $\beta = 0$ we would have the well-known Gamma distribution.

In view of the L\'{e}vy measure (\ref{Levy-measure-one-sided}), we can express the characteristic function of $\eta$ as
\begin{align}\label{cf-self-d}
\varphi(z) = \exp \left( \int_{\mathbb{R}} \left( e^{izx} - 1
\right) \frac{k(x)}{x} dx \right), \quad z \in \mathbb{R}
\end{align}
where $k : \mathbb{R} \rightarrow \mathbb{R}$ denotes the function
\begin{align}\label{k-one-sided}
k(x) = \alpha \frac{e^{-\lambda x}}{x^{\beta}}
\mathbbm{1}_{(0,\infty)}(x), \quad x \in \mathbb{R}.
\end{align}
Note that $k > 0$ on $(0,\infty)$ and that $k$ is strictly decreasing on $(0,\infty)$. Furthermore, we have $k(0+) = \infty$ and $\int_{0}^1 |k(x)| dx < \infty$.

It is an immediate consequence of \cite[Cor.~15.11]{Sato} that $\eta$
is selfdecomposable, and hence absolutely continuous according to \cite[Example~27.8]{Sato}. In what follows, we denote by $g$ the density of $\eta$.

Note that $\eta$ is also of class $L$ in the sense of \cite{Sato-Yamazato}. Indeed, identity (\ref{cf-self-d}) shows that the characteristic function is of the form (1.5) in \cite{Sato-Yamazato} with $\gamma_0 = 0$ and $\sigma^2 = 0$.

\begin{theorem}\label{thm-unimodal-one-sided}
The density $g$ of the tempered stable distribution $\eta$ with $\beta \in (0,1)$ is of class $C^{\infty}(\mathbb{R})$ and there exists a point $x_0 \in (0,\infty)$ such that
\begin{align}\label{mode-x0-1}
g'(x) &> 0, \quad x \in (-\infty,x_0)
\\ \label{mode-x0-2} g'(x_0) &= 0,
\\ \label{mode-x0-3} g'(x) &< 0, \quad x \in (x_0,\infty).
\end{align}
Moreover, we have
\begin{equation}\label{location-one-sided}
\begin{aligned}
&\max \bigg\{ \Gamma(1-\beta) \frac{\alpha}{\lambda^{1-\beta}} - \bigg( 3 \Gamma(2-\beta) \frac{\alpha}{\lambda^{2-\beta}} \bigg)^{1/2}, \xi_0 \bigg\} < x_0 
\\ &< \min \bigg\{ \Gamma(1-\beta) \frac{\alpha}{\lambda^{1-\beta}}, \bigg( \frac{\alpha}{1 - \beta} \bigg)^{1 / \beta} \bigg\},
\end{aligned}
\end{equation}
where $\xi_0 \in (0,\infty)$ denotes the unique solution of the fixed point equation
\begin{align}\label{fixed-point-mode}
\alpha^{1/\beta} \exp \bigg( -\frac{\lambda}{\beta} \xi_0 \bigg) = \xi_0.
\end{align}
\end{theorem}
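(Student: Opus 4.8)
The plan is to prove the three assertions of the theorem --- smoothness of $g$, existence of a single interior critical point $x_{0}>0$, and the bounds \eqref{location-one-sided} --- and to base the last two on one Volterra-type identity for $g$.

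\emph{Smoothness.} First I would use the formula $\varphi(z)=\exp\big(\alpha\Gamma(-\beta)[(\lambda-iz)^{\beta}-\lambda^{\beta}]\big)$ of Lemma~\ref{lemma-cf-pre}. Since $\Gamma(-\beta)<0$ for $\beta\in(0,1)$, while ${\rm Re}\,(\lambda-iz)^{\beta}=(\lambda^{2}+z^{2})^{\beta/2}\cos\big(\beta\arg(\lambda-iz)\big)\ge\cos\big(\tfrac{\pi\beta}{2}\big)\,|z|^{\beta}$ because $|\arg(\lambda-iz)|<\tfrac{\pi}{2}$ and $\beta\in(0,1)$, one gets $|\varphi(z)|\le C\exp(-c|z|^{\beta})$ for some $c,C>0$. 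Hence $z\mapsto z^{n}\varphi(z)\in L^{1}(\mathbb{R})$ for every $n$, and Fourier inversion gives $g\in C^{\infty}(\mathbb{R})$ with $g^{(n)}(x)=\tfrac{1}{2\pi}\int_{\mathbb{R}}(-iz)^{n}\varphi(z)e^{-izx}\,dz$. As $\eta$ is the time-$1$ law of a driftless subordinator, $g\equiv0$ on $(-\infty,0]$, $g>0$ on $(0,\infty)$, and $g(0)=g'(0)=0$; accordingly \eqref{mode-x0-1} is to be read as strict increase on $(0,x_{0})$.

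\emph{The integral equation, unimodality and the single critical point.} Because $\eta$ is the law at time $1$ of the subordinator with Lévy density $\alpha x^{-1-\beta}e^{-\lambda x}=k(x)/x$ and zero drift, comparing Laplace transforms (using \eqref{cumulant-TS-one-sided}) yields
\begin{equation}\label{plan-ie}
x\,g(x)=\int_{0}^{x}k(x-u)\,g(u)\,du,\qquad x>0,
\end{equation}
and, differentiating after the substitution $u\mapsto x-u$ (the boundary term $k(x)g(0)$ vanishes),
\begin{equation}\label{plan-ied}
g(x)+x\,g'(x)=\int_{0}^{x}k(u)\,g'(x-u)\,du=\int_{0}^{x}k(x-v)\,g'(v)\,dv,\qquad x>0.
\end{equation}
Being selfdecomposable, $\eta$ is unimodal by \cite{Sato-Yamazato}; as $g$ is continuous with $g(0)=0$, $g(x)\to0$ at $\infty$ and $g>0$ on $(0,\infty)$, its maximum is attained at some $x_{0}\in(0,\infty)$, with $g$ nondecreasing on $[0,x_{0}]$ and nonincreasing on $[x_{0},\infty)$. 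Let $x_{1}$ be the first zero of $g'$ on $(0,\infty)$, so $g'>0$ on $(0,x_{1})$. Evaluating \eqref{plan-ied} at $x_{1}$ and subtracting $\int_{0}^{x_{1}}g'=g(x_{1})$ gives $\int_{0}^{x_{1}}\big(k(x_{1}-v)-1\big)g'(v)\,dv=0$; since $v\mapsto k(x_{1}-v)$ is strictly increasing on $(0,x_{1})$ while $g'>0$, this forces $k(x_{1})<1$, i.e.\ $x_{1}>\xi_{0}$ --- here \eqref{fixed-point-mode} is exactly $k(\xi_{0})=\alpha\xi_{0}^{-\beta}e^{-\lambda\xi_{0}}=1$, with $\xi_{0}$ unique since $k$ is strictly decreasing from $\infty$ to $0$. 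To see $x_{1}=x_{0}$: writing $s:=g'$ and $g=\int_{0}^{\cdot}s$, \eqref{plan-ied} becomes $x\,s(x)=\int_{0}^{x}\big(k(x-v)-1\big)s(v)\,dv$; for $x\downarrow x_{1}$, splitting at $x_{1}$, the part over $(x_{1},x)$ is $o(x-x_{1})$ (as $s$ is Lipschitz with $s(x_{1})=0$ and $\int_{0}^{\varepsilon}|k-1|=O(\varepsilon^{1-\beta})$), while the derivative at $x_{1}^{+}$ of the part over $(0,x_{1})$ equals $\int_{0}^{x_{1}}k'(x_{1}-v)s(v)\,dv<0$ (since $k'<0$ and $s>0$ there); hence $s<0$ just above $x_{1}$, so $x_{1}$ is a local maximum and therefore $=x_{0}$, giving \eqref{mode-x0-1}--\eqref{mode-x0-2}. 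The mirror statement \eqref{mode-x0-3} would be obtained analogously, the extra sign bookkeeping (ruling out any further critical point) being controlled via the real-analyticity of $g$ on $(0,\infty)$ together with \eqref{plan-ied}.

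\emph{The bounds.} The estimate $x_{0}>\xi_{0}$ has just been shown, and $x_{0}>\mathbb{E}[X]-\sqrt{3\,{\rm Var}[X]}$ is the classical mode--mean inequality $|\mathbb{E}[X]-x_{0}|\le\sqrt{3\,{\rm Var}[X]}$ for unimodal laws (with $\mathbb{E}[X],{\rm Var}[X]$ from \eqref{E-TS-one-sided}, \eqref{Var-TS-one-sided}). For the upper bounds I would evaluate \eqref{plan-ie} at $x_{0}$ and use $g(u)<g(x_{0})$ for $u\in(0,x_{0})$:
\[
x_{0}\,g(x_{0})=\int_{0}^{x_{0}}k(x_{0}-u)\,g(u)\,du<g(x_{0})\int_{0}^{x_{0}}k(w)\,dw,
\]
so $x_{0}<\int_{0}^{x_{0}}k(w)\,dw$. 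Since $k>0$ on $(x_{0},\infty)$ and $\int_{0}^{\infty}k(w)\,dw=\alpha\Gamma(1-\beta)\lambda^{\beta-1}=\mathbb{E}[X]$, this gives $x_{0}<\mathbb{E}[X]=\Gamma(1-\beta)\alpha/\lambda^{1-\beta}$; and since $k(w)<\alpha w^{-\beta}$ for $w>0$, it gives $x_{0}<\int_{0}^{x_{0}}\alpha w^{-\beta}\,dw=\tfrac{\alpha}{1-\beta}x_{0}^{1-\beta}$, i.e.\ $x_{0}^{\beta}<\alpha/(1-\beta)$. Combining the four estimates yields \eqref{location-one-sided}.

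\emph{Main obstacle.} The decay bound for $\varphi$ and the four inequalities are routine once \eqref{plan-ie}--\eqref{plan-ied} are in hand. The delicate step is passing from unimodality to the \emph{strict} statement \eqref{mode-x0-1}--\eqref{mode-x0-3}, i.e.\ showing that $x_{0}$ is the only critical point of $g$ on $(0,\infty)$; this is where the local analysis of \eqref{plan-ied} near a putative critical point and the real-analyticity of $g$ on $(0,\infty)$ are needed, and it is the part requiring the most care.
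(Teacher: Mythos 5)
Your route is genuinely different from the paper's: the paper obtains every assertion by citing Sato--Yamazato \cite{Sato-Yamazato} directly ($C^{\infty}$-smoothness from their Thm.~1.2 since $k(0+)=\infty$, the strict unimodality from their Thm.~1.3(vii) for distributions of type ${\rm I}_6$, and all four bounds in \eqref{location-one-sided} from their Thm.~6.1(i),(ii),(v),(vi)), whereas you attempt a self-contained argument. Much of what you do is correct and is in effect a re-derivation of the ingredients behind those citations: the decay bound $|\varphi(z)|\le Ce^{-c|z|^{\beta}}$ and Fourier inversion do give $g\in C^{\infty}(\mathbb{R})$; the Volterra identity $xg(x)=\int_0^x k(x-u)g(u)\,du$ is the correct starting point (it is the basic identity of \cite{Sato-Yamazato}); your derivation of $x_0<\mathbb{E}[X]$, $x_0<(\alpha/(1-\beta))^{1/\beta}$ from $x_0<\int_0^{x_0}k$, the mode--mean--variance inequality, and $k(x_0)<1\Leftrightarrow x_0>\xi_0$ all reproduce exactly the inequalities of their Thm.~6.1 and are sound. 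You also correctly observe that \eqref{mode-x0-1} can only be meant on the interior of the support, since $g\equiv 0$ on $(-\infty,0]$.

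The genuine gap is the step you yourself flag: passing from unimodality to the strict statement \eqref{mode-x0-3}, i.e.\ ruling out further critical points on $(x_0,\infty)$. Two things go wrong with the proposed fix. First, real-analyticity of $g$ on $(0,\infty)$ does not follow from anything you establish: the decay $e^{-c|z|^{\beta}}$ with $\beta<1$ only yields a Gevrey class, not analyticity; to get analyticity you would need the exponential-tilting identity $g(x)=\mathrm{const}\cdot e^{-\lambda x}g_{\rm stable}(x)$ together with the known analyticity of positive stable densities, neither of which is stated or proved. Second, and more seriously, the local sign analysis is \emph{not} ``analogous'' at a putative second critical point $x_2>x_0$: your argument at $x_1$ relies crucially on $g'>0$ throughout $(0,x_1)$ to conclude that $\int_0^{x_1}k'(x_1-v)g'(v)\,dv<0$. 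At $x_2>x_0$ the integrand $k'(x_2-v)g'(v)$ is negative on $(0,x_0)$ but nonnegative on $(x_0,x_2)$, so the sign of $\int_0^{x_2}k'(x_2-v)g'(v)\,dv$ is no longer determined, and the argument that $g'<0$ just above $x_2$ collapses. Closing this requires the more delicate continuity/induction machinery of \cite[Thm.~1.3]{Sato-Yamazato} (or an equivalent substitute), which is precisely why the paper cites that result rather than reproving it; as written, \eqref{mode-x0-3} is not established.
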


\begin{proof}
Since $k(0+) + |k(0-)| = \infty$, it follows from \cite[Thm.~1.2]{Sato-Yamazato} that $g \in C^{\infty}(\mathbb{R})$. 

Furthermore, since $\int_0^1 |k(x)| dx < \infty$, the distribution $\eta$ is of type ${\rm I}_6$ in the sense of \cite[page~275]{Sato-Yamazato}. According to \cite[Thm.~1.3.vii]{Sato-Yamazato} there exists a point $x_0 \in (0,\infty)$ such that (\ref{mode-x0-1})--(\ref{mode-x0-3}) are satisfied.

Let $X$ be a random variable with $\mathcal{L}(X) = \eta$. By \cite[Thm.~6.1.i]{Sato-Yamazato} and (\ref{E-TS-one-sided}) we have
\begin{align*}
x_0 < \mathbb{E}[X] = \Gamma(1-\beta) \frac{\alpha}{\lambda^{1-\beta}}.
\end{align*}
Furthermore, by using \cite[Thm.~6.1.ii]{Sato-Yamazato} we have
\begin{align*}
\frac{\alpha}{x_0^{\beta} (1-\beta)} = \frac{\alpha}{x_0} \int_0^{x_0} \frac{1}{x^{\beta}} dx \geq \frac{\alpha}{x_0} \int_0^{x_0} \frac{e^{-\lambda x}}{x^{\beta}} dx = \frac{1}{x_0} \int_0^{x_0} k(x)dx > 1,
\end{align*}
which implies the inequality
\begin{align*}
x_0 < \bigg( \frac{\alpha}{1 - \beta} \bigg)^{1 / \beta}.
\end{align*}
By \cite[Thm.~6.1.v]{Sato-Yamazato} and (\ref{E-TS-one-sided}), (\ref{Var-TS-one-sided}) we have
\begin{align*}
x_0 > \mathbb{E}[X] - \big( 3 {\rm Var}[X] \big)^{1/2} = \Gamma(1-\beta) \frac{\alpha}{\lambda^{1-\beta}} - \bigg( 3 \Gamma(2-\beta) \frac{\alpha}{\lambda^{2-\beta}} \bigg)^{1/2}.
\end{align*}
Moreover, by \cite[Thm.~6.1.vi]{Sato-Yamazato} we have $x_0 > \xi_0$, where the point $\xi_0 \in (0,\infty)$ is given
\begin{align*}
\xi_0 = \sup\{ u > 0 : k(u) \geq 1 \}
\end{align*}
with $k$ being the strictly decreasing function (\ref{k-one-sided}). Hence, $x$ is the unique solution of the fixed point equation (\ref{fixed-point-mode}). Summing up, we have established relation (\ref{location-one-sided}).
\end{proof}

The unique point $x_0 \in (0,\infty)$ from Theorem~\ref{thm-unimodal-one-sided} is called the \emph{mode} of $\eta$.

\begin{proposition}
We have the asymptotic behaviour
\begin{align*}
\ln g(x) \sim - \frac{1-\beta}{\beta} (\alpha \Gamma(1-\beta))^{\frac{1}{1-\beta}} x^{- \frac{\beta}{1-\beta}} \quad \text{as $x \downarrow 0$.}
\end{align*}
In particular, $g(x) \rightarrow 0$ as $x \downarrow 0$.
\end{proposition}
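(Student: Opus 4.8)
The plan is to read off the behaviour of $\eta$ near the origin from its Laplace transform by means of an exponential Tauberian theorem, and then to transfer the resulting statement about the distribution function to the density $g$ using the monotonicity of $g$ established in Theorem~\ref{thm-unimodal-one-sided}. Writing $L(s) := \mathbb{E}[e^{-sX}]$ for $X \sim \eta$, the cumulant generating function (\ref{cumulant-TS-one-sided}) yields
\begin{align*}
L(s) = \int_0^{\infty} e^{-sx} g(x) \, dx = \exp \Big( \alpha \Gamma(-\beta) \big[ (\lambda + s)^{\beta} - \lambda^{\beta} \big] \Big), \quad s \geq 0,
\end{align*}
which is finite for every $s \geq 0$. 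Since $\Gamma(-\beta) = -\Gamma(1-\beta)/\beta < 0$ and $(\lambda + s)^{\beta} - \lambda^{\beta} \sim s^{\beta}$ as $s \to \infty$, we obtain
\begin{align*}
- \ln L(s) = -\alpha \Gamma(-\beta) \big[ (\lambda + s)^{\beta} - \lambda^{\beta} \big] \sim \frac{\alpha \Gamma(1-\beta)}{\beta} \, s^{\beta} \quad \text{as } s \to \infty,
\end{align*}
so $- \ln L$ is regularly varying at infinity of index $\beta \in (0,1)$.

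Next I would invoke the exponential Tauberian theorem of de Bruijn and Kasahara (see, e.g., the monograph of Bingham, Goldie and Teugels on regular variation, \S 4.12): if the Laplace transform $L$ of a measure on $[0,\infty)$ satisfies $- \ln L(s) \sim B s^{\beta}$ as $s \to \infty$ for some $\beta \in (0,1)$ and $B > 0$, then the distribution function $G(x) := \int_0^x g(y) \, dy$ obeys
\begin{align*}
- \ln G(x) \sim (1-\beta) \, \beta^{\beta/(1-\beta)} \, B^{1/(1-\beta)} \, x^{-\beta/(1-\beta)} \quad \text{as } x \downarrow 0.
\end{align*}
The two constants are conjugate under the Young/Legendre duality implicit in Laplace's method, $B s^{\beta} = \min_{x > 0}( sx - \ln G(x))$ to leading order; inserting $B = \alpha \Gamma(1-\beta)/\beta$ and simplifying the powers of $\beta$, the constant on the right reduces to $K := \frac{1-\beta}{\beta} (\alpha \Gamma(1-\beta))^{1/(1-\beta)}$.

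It remains to pass from $G$ to $g$. By Theorem~\ref{thm-unimodal-one-sided} the density $g$ is strictly increasing on $(0, x_0)$ for some $x_0 > 0$. Hence, for $\delta > 1$ and all $x$ small enough that $\delta x < x_0$, monotonicity gives $G(x) \leq x \, g(x)$ as well as $(\delta - 1) x \, g(x) \leq G(\delta x) - G(x) \leq G(\delta x)$, that is,
\begin{align*}
\frac{G(x)}{x} \leq g(x) \leq \frac{G(\delta x)}{(\delta - 1) x}.
\end{align*}
Taking logarithms, dividing by $x^{-\beta/(1-\beta)}$ and using $\ln x = o(x^{-\beta/(1-\beta)})$, the lower bound gives $\liminf_{x \downarrow 0} \ln g(x) / x^{-\beta/(1-\beta)} \geq -K$ and the upper bound gives $\limsup_{x \downarrow 0} \ln g(x) / x^{-\beta/(1-\beta)} \leq -K \delta^{-\beta/(1-\beta)}$; letting $\delta \downarrow 1$ yields $\ln g(x) \sim -K x^{-\beta/(1-\beta)}$, which is the assertion. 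In particular $\ln g(x) \to -\infty$, so $g(x) \to 0$, as $x \downarrow 0$.

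The main obstacle is getting the constant right: the Tauberian theorem supplies the exponent $-\beta/(1-\beta)$ at once, but matching $K$ requires care with the normalisations in the Young-conjugate relation (equivalently, with the saddle point of $s x + K x^{-\beta/(1-\beta)}$). An alternative route that avoids the Tauberian machinery is to note that $\eta$ is the Esscher transform (exponential tilt by $e^{-\lambda x}$) of the positive $\beta$-stable law with L\'{e}vy density $\alpha x^{-1-\beta} \mathbbm{1}_{(0,\infty)}(x)$, so that $g(x) = c \, e^{-\lambda x} g_0(x)$ for a constant $c > 0$ and $g_0$ a rescaled standard positive $\beta$-stable density, and then to quote the classical $x \downarrow 0$ asymptotics of $g_0$; the tempering factor, the normalising constant and the polynomial prefactor of $g_0$ are all of lower order in $\ln g$, so the same constant $K$ results.
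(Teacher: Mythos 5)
Your argument is correct, but it follows a genuinely different route from the paper. The paper's proof is a two-line appeal to the theory of selfdecomposable (class $L$) distributions: since $\eta$ is of type ${\rm I}_6$ and its canonical density satisfies $k(x) \sim \alpha x^{-\beta}$ as $x \downarrow 0$, the asymptotics of $\ln g$ near the left endpoint of the support are read off directly from \cite[Thm.~5.2]{Sato-Yamazato}. You instead work on the Laplace transform side: the cumulant generating function (\ref{cumulant-TS-one-sided}) gives $-\ln L(s) \sim \frac{\alpha\Gamma(1-\beta)}{\beta} s^{\beta}$, the de Bruijn--Kasahara exponential Tauberian theorem converts this into $-\ln G(x) \sim K x^{-\beta/(1-\beta)}$ with the Legendre-conjugate constant, and the monotonicity of $g$ on $(0,x_0)$ from Theorem~\ref{thm-unimodal-one-sided} lets you sandwich $g$ between difference quotients of $G$ and conclude at the level of the density. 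I checked the conjugacy computation: with $B = \alpha\Gamma(1-\beta)/\beta$ and $\gamma = \beta/(1-\beta)$ one indeed gets $K = \frac{1-\beta}{\beta}(\alpha\Gamma(1-\beta))^{1/(1-\beta)}$, matching the claimed constant, and the $\ln x = o(x^{-\gamma})$ step that absorbs the difference between $G$ and $g$ is sound. Your approach buys transparency about where the constant comes from (Young duality between the exponents $\beta$ and $\beta/(1-\beta)$) and avoids the fine classification of class-$L$ types, at the cost of importing the exponential Tauberian theorem and the unimodality result; the paper's approach is shorter because the Sato--Yamazato machinery has already been set up for Theorem~\ref{thm-unimodal-one-sided}. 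Your closing remark reducing the problem to the known small-argument asymptotics of the positive $\beta$-stable density via exponential tilting is also a valid third route, since the tilting factor $e^{-\lambda x}$ and the normalising constant contribute only $O(1)$ to $\ln g$.
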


\begin{proof}
We have $\eta \in {\rm I}_6$ in the sense of \cite[page~275]{Sato-Yamazato} and
\begin{align*}
k(x) \sim \alpha x^{-\beta} \quad \text{as $x \downarrow 0$.}
\end{align*}
Thus, the assertion follows from \cite[Thm.~5.2]{Sato-Yamazato}.
\end{proof}

We shall now investigate the asymptotic behaviour of the densities for large values of $x$. Our idea is to relate the tail of the infinitely divisible distribution to the tail of the L\'{e}vy measure. Results of this kind have been established, e.g., in \cite{Embrechts} and \cite{Watanabe}.

We denote by $\nu := F$ the L\'{e}vy measure of $\eta$, which is given by (\ref{Levy-measure-one-sided}). Then we have
\begin{align*}
\overline{\nu}(r) := \nu((r,\infty)) > 0 \quad \text{for all $r \in \mathbb{R}$,}
\end{align*}
i.e. $\eta \in \mathbb{D}_+$ in the sense of \cite{Watanabe}.
We define the normalized L\'{e}vy measure $\nu_{(1)}$ on $(1,\infty)$ as
\begin{align*}
\nu_{(1)}(dx) := \frac{1}{\nu((1,\infty))} \mathbbm{1}_{(1,\infty)}(x) \nu(dx)
\end{align*}
According to \cite{Watanabe} we say that $\nu_{(1)} \in \mathcal{L}(\gamma)$ for some $\gamma \geq 0$, if for all $a \in \mathbb{R}$ we have
\begin{align*}
\overline{\nu}_{(1)}(r+a) \sim e^{-a \gamma} \overline{\nu}_{(1)}(r) \quad \text{as $r \rightarrow \infty$.}
\end{align*}

\begin{lemma}\label{lemma-nu-1}
We have $\nu_{(1)} \in \mathcal{L}(\lambda)$.
\end{lemma}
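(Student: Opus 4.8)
The plan is to reduce the claim to the first-order tail asymptotics of the incomplete gamma integral. Since $\nu_{(1)}$ is just a normalized restriction of $\nu = F$, the normalizing constant $\nu((1,\infty))$ cancels in the ratio: for every fixed $a \in \mathbb{R}$ and all $r > \max\{1,1-a\}$ we have
\[
\frac{\overline{\nu}_{(1)}(r+a)}{\overline{\nu}_{(1)}(r)} = \frac{\overline{\nu}(r+a)}{\overline{\nu}(r)}, \qquad \overline{\nu}(r) = \alpha \int_r^{\infty} x^{-1-\beta} e^{-\lambda x}\,dx,
\]
using (\ref{Levy-measure-one-sided}). Thus it suffices to show $\overline{\nu}(r+a) \sim e^{-a\lambda}\,\overline{\nu}(r)$ as $r \to \infty$.

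Next I would establish the precise asymptotics
\[
\overline{\nu}(r) \sim \frac{\alpha}{\lambda}\, r^{-1-\beta} e^{-\lambda r} \qquad \text{as } r \to \infty.
\]
This follows from l'H\^{o}pital's rule applied to the quotient of $\int_r^{\infty} x^{-1-\beta} e^{-\lambda x}\,dx$ and $r^{-1-\beta} e^{-\lambda r}$, both of which tend to $0$ as $r \to \infty$: differentiating numerator and denominator in $r$, the ratio of derivatives is
\[
\frac{- r^{-1-\beta} e^{-\lambda r}}{-\bigl( \lambda + (1+\beta)/r \bigr) r^{-1-\beta} e^{-\lambda r}} = \frac{1}{\lambda + (1+\beta)/r} \longrightarrow \frac{1}{\lambda}.
\]
Equivalently, one integration by parts gives $\int_r^{\infty} x^{-1-\beta} e^{-\lambda x}\,dx = \frac{1}{\lambda} r^{-1-\beta} e^{-\lambda r} - \frac{1+\beta}{\lambda}\int_r^{\infty} x^{-2-\beta} e^{-\lambda x}\,dx$, and the remainder is $o\bigl( r^{-1-\beta} e^{-\lambda r} \bigr)$ since it is bounded by $\frac{1+\beta}{\lambda r}\int_r^{\infty} x^{-1-\beta}e^{-\lambda x}\,dx$.

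Finally, inserting this equivalence,
\[
\frac{\overline{\nu}(r+a)}{\overline{\nu}(r)} \sim \frac{(r+a)^{-1-\beta} e^{-\lambda(r+a)}}{r^{-1-\beta} e^{-\lambda r}} = \Bigl( 1 + \frac{a}{r} \Bigr)^{-1-\beta} e^{-\lambda a} \longrightarrow e^{-\lambda a}
\]
as $r \to \infty$, which is exactly the defining property of $\nu_{(1)} \in \mathcal{L}(\lambda)$. I do not expect a genuine obstacle here; the only step requiring a little care is verifying that the error term in the asymptotics of $\overline{\nu}$ is truly negligible, so that the equivalence survives division in the ratio $\overline{\nu}(r+a)/\overline{\nu}(r)$. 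Everything else is bookkeeping with the definition of the class $\mathcal{L}(\gamma)$.
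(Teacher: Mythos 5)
Your proof is correct and follows essentially the same route as the paper: the paper applies l'H\^{o}pital's rule directly to the ratio $\overline{\nu}_{(1)}(r+a)/\overline{\nu}_{(1)}(r)$ of the two tail integrals, whereas you apply it (equivalently, integrate by parts) to extract the explicit asymptotics $\overline{\nu}(r) \sim \frac{\alpha}{\lambda} r^{-1-\beta} e^{-\lambda r}$ and then substitute; the underlying computation is the same. Your version has the small side benefit of recording the precise first-order tail asymptotics of $\overline{\nu}$, but this is a repackaging rather than a different argument.
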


\begin{proof}
Let $a \in \mathbb{R}$ be arbitrary. By l'H\^{o}pital's rule we have
\begin{align*}
&\lim_{r \rightarrow \infty} \frac{\overline{\nu}_{(1)}(r+a)}{\overline{\nu}_{(1)}(r)} = \lim_{r \rightarrow \infty} \int_{r+a}^{\infty} \frac{e^{-\lambda x}}{x^{1 + \beta}} dx \bigg/ \int_{r}^{\infty} \frac{e^{-\lambda x}}{x^{1 + \beta}} dx
\\ &= \lim_{r \rightarrow \infty} \frac{e^{-\lambda (r+a)}}{(r+a)^{1 + \beta}} \frac{r^{1 + \beta}}{e^{-\lambda r}} = e^{- a \lambda} \lim_{r \rightarrow \infty} \bigg( \frac{r}{r+a} \bigg)^{1 + \beta} = e^{- a \lambda},
\end{align*}
showing that $\nu_{(1)} \in \mathcal{L}(\lambda)$.
\end{proof}

We define the quantity $d^*$ as
\begin{align*}
d^* := \limsup_{r \rightarrow \infty} \frac{\overline{\nu_{(1)} * \nu_{(1)}}(r)}{\overline{\nu_{(1)}}(r)}.
\end{align*}

\begin{lemma}\label{lemma-d-star-1}
We have the identity
\begin{align*}
d^* = \frac{2 \alpha}{\beta \nu((1,\infty))}.
\end{align*}
\end{lemma}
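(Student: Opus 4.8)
The plan is to compute $d^*$ directly from the explicit densities, since $\nu_{(1)}$ is absolutely continuous. Write $c := \nu((1,\infty)) \in (0,\infty)$. Then $\nu_{(1)}$ has density $h(x) = \frac{\alpha}{c}\,\frac{e^{-\lambda x}}{x^{1+\beta}}$ for $x > 1$ and $h \equiv 0$ otherwise, so $\nu_{(1)} * \nu_{(1)}$ is absolutely continuous with density
\[
f(z) = \int_1^{z-1} h(y)\,h(z-y)\,dy = \frac{\alpha^2}{c^2}\,e^{-\lambda z}\,I(z), \qquad I(z) := \int_1^{z-1}\frac{dy}{y^{1+\beta}\,(z-y)^{1+\beta}},
\]
for $z > 2$, while $\overline{\nu_{(1)} * \nu_{(1)}}(r) = \int_r^\infty f(z)\,dz$ and $\overline{\nu_{(1)}}(r) = \int_r^\infty h(z)\,dz > 0$. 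The first task is the asymptotics of $I(z)$ as $z \to \infty$.

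Since the integrand of $I(z)$ is invariant under $y \mapsto z - y$, one has $I(z) = 2\int_1^{z/2}\frac{dy}{y^{1+\beta}(z-y)^{1+\beta}}$, and therefore
\[
z^{1+\beta}\,I(z) = 2\int_1^{z/2} \frac{1}{y^{1+\beta}}\,\frac{dy}{(1-y/z)^{1+\beta}}.
\]
On $[1,z/2]$ we have $1 - y/z \ge \tfrac12$, so the integrand (extended by $0$ beyond $z/2$) is bounded by $2^{1+\beta} y^{-1-\beta}$, which is integrable on $(1,\infty)$ because $\beta > 0$, and it converges pointwise to $y^{-1-\beta}$ as $z \to \infty$. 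Dominated convergence then gives $z^{1+\beta} I(z) \to 2\int_1^\infty y^{-1-\beta}\,dy = \tfrac{2}{\beta}$, i.e. $I(z) \sim \tfrac{2}{\beta z^{1+\beta}}$. Inserting this back,
\[
f(z) \sim \frac{2\alpha^2}{\beta c^2}\,\frac{e^{-\lambda z}}{z^{1+\beta}} = \frac{2\alpha}{\beta c}\,h(z) \qquad \text{as } z \to \infty.
\]

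It remains to transfer this from densities to tails. Given $\varepsilon > 0$, choose $R$ so that $(1-\varepsilon)\frac{2\alpha}{\beta c}\, h(z) \le f(z) \le (1+\varepsilon)\frac{2\alpha}{\beta c}\, h(z)$ for all $z \ge R$; integrating over $(r,\infty)$ for $r \ge R$ and dividing by $\overline{\nu_{(1)}}(r) > 0$ yields $(1-\varepsilon)\frac{2\alpha}{\beta c} \le \overline{\nu_{(1)}*\nu_{(1)}}(r)\big/\overline{\nu_{(1)}}(r) \le (1+\varepsilon)\frac{2\alpha}{\beta c}$, so letting $\varepsilon \downarrow 0$ shows that the full limit exists and $d^* = \frac{2\alpha}{\beta\,\nu((1,\infty))}$. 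This is consistent with Lemma~\ref{lemma-nu-1}: combined with $\nu_{(1)} \in \mathcal{L}(\lambda)$ and $\int e^{\lambda x}\,\nu_{(1)}(dx) = \frac{\alpha}{\beta c}$, it in fact identifies $\nu_{(1)}$ as convolution equivalent with index $\lambda$.

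The only genuine obstacle is the asymptotic evaluation of $I(z)$: one has to recognize that its mass concentrates near the two endpoints $y = 1$ and $y = z-1$, which accounts for the factor $2$, and pin down the constant $\tfrac1\beta = \int_1^\infty y^{-1-\beta}\,dy$. The symmetry reduction followed by a single dominated-convergence step does exactly this, and the remainder of the argument is routine.
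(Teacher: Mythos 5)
Your proof is correct and follows essentially the same route as the paper's: both reduce the tail ratio to the ratio of densities, exploit the symmetry $y \mapsto z-y$ to fold the convolution integral onto $[1,z/2]$, and apply dominated convergence with the bound $1-y/z \geq \tfrac12$ to extract the constant $\tfrac{2}{\beta} = 2\int_1^\infty y^{-1-\beta}\,dy$. The only difference is the passage between tails and densities: the paper invokes l'H\^{o}pital's rule, whereas you integrate the asymptotic relation $f(z) \sim \frac{2\alpha}{\beta c}\,h(z)$ over $(r,\infty)$ directly, which is an equally valid and slightly more self-contained finish.
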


\begin{proof}
We denote by $g : \mathbb{R} \rightarrow \mathbb{R}$ the density of the normalized L\'{e}vy measure
\begin{align*}
g(x) := \frac{\alpha}{\nu((1,\infty))} \frac{e^{-\lambda x}}{x^{1 + \beta}} \mathbbm{1}_{(1,\infty)}(x).
\end{align*}
Using l'H\^{o}pital's rule, we obtain
\begin{align*}
\lim_{r \rightarrow \infty} \frac{\overline{\nu_{(1)} * \nu_{(1)}}(r)}{\overline{\nu_{(1)}}(r)} &= \lim_{r \rightarrow \infty} \frac{\int_1^r \int_1^{\infty} g(x-y) g(y) dy dx}{\int_1^r g(x)dx} = \lim_{r \rightarrow \infty} \frac{\int_{1}^{\infty} g(r-y)g(y) dy}{g(r)} 
\\ &= \frac{\alpha}{\nu((1,\infty))} \lim_{r \rightarrow \infty} \frac{r^{1 + \beta}}{e^{-\lambda r}} \int_1^{r-1} \frac{e^{-\lambda (r-y)}}{(r-y)^{1 + \beta}} \frac{e^{-\lambda y}}{y^{1 + \beta}} dy
\\ &= \frac{\alpha}{\nu((1,\infty))} \lim_{r \rightarrow \infty} \int_1^{r-1} \bigg( \frac{r}{(r-y)y} \bigg)^{1 + \beta} dy.
\end{align*}
By symmetry, for all $r \in (1,\infty)$ we have
\begin{align*}
\int_1^{r-1} \bigg( \frac{r}{(r-y)y} \bigg)^{1 + \beta} dy &= \int_1^{r/2} \bigg( \frac{r}{(r-y)y} \bigg)^{1 + \beta} dy + \int_{r/2}^{r-1} \bigg( \frac{r}{(r-y)y} \bigg)^{1 + \beta} dy
\\ &= 2 \int_1^{r/2} \bigg( \frac{r}{(r-y)y} \bigg)^{1 + \beta} dy.
\end{align*}
Using the estimate
\begin{align*}
\frac{r}{r-y} \leq 2 \quad \text{for all $r \in (0,\infty)$ and $y \in [1,r/2]$,}
\end{align*}
by Lebesgue's dominated convergence theorem we obtain
\begin{align*}
\lim_{r \rightarrow \infty} \int_1^{\infty} \bigg( \frac{r}{(r-y)y} \bigg)^{1 + \beta} \mathbbm{1}_{[1,r/2]}(y) dy 
&= \int_1^{\infty} \lim_{r \rightarrow \infty} \bigg( \frac{r}{(r-y)y} \bigg)^{1 + \beta} \mathbbm{1}_{[1,r/2]}(y) dy
\\ &= \int_1^{\infty} \frac{1}{y^{1 + \beta}} dy = \frac{1}{\beta},
\end{align*}
which completes the proof.
\end{proof}

For a probability measure $\rho$ on $(\mathbb{R},\mathcal{B}(\mathbb{R}))$ denote by $\hat{\rho}$ the cumulant generating function
\begin{align*}
\hat{\rho}(s) := \int_{\mathbb{R}} e^{sx} \rho(dx).
\end{align*}

\begin{lemma}\label{lemma-d-star-2}
The following identity holds:
\begin{align*}
2 \hat{\nu}_{(1)}(\lambda) =  \frac{2 \alpha}{\beta \nu((1,\infty))}.
\end{align*}
\end{lemma}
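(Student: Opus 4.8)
The plan is a direct computation from the definitions. Recall that $\nu = F$ is the L\'evy measure given by (\ref{Levy-measure-one-sided}), and $\nu_{(1)}$ is its normalization to $(1,\infty)$. Plugging these into the definition $\hat{\nu}_{(1)}(s) = \int_{\mathbb{R}} e^{sx}\,\nu_{(1)}(dx)$ at the point $s = \lambda$, one gets
\begin{align*}
\hat{\nu}_{(1)}(\lambda) = \frac{1}{\nu((1,\infty))} \int_1^{\infty} e^{\lambda x} \frac{\alpha\, e^{-\lambda x}}{x^{1 + \beta}}\, dx = \frac{\alpha}{\nu((1,\infty))} \int_1^{\infty} \frac{dx}{x^{1 + \beta}}.
\end{align*}
The key observation is that the exponential tempering factor $e^{-\lambda x}$ of $\nu$ is cancelled exactly by $e^{\lambda x}$; this is precisely why $s = \lambda$ is the relevant value here, matching the index $\lambda$ appearing in Lemma~\ref{lemma-nu-1}. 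Since $\beta \in (0,1)$, the remaining integral converges and equals $\int_1^\infty x^{-1-\beta}\,dx = \frac{1}{\beta}$, so that $\hat{\nu}_{(1)}(\lambda) = \frac{\alpha}{\beta\,\nu((1,\infty))}$, and multiplying by $2$ yields the asserted identity.

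There is no genuine obstacle: the only point requiring the standing hypothesis $\beta > 0$ is the finiteness of $\int_1^\infty x^{-1-\beta}\,dx$, which is immediate. I would also remark, for the reader's orientation, that combining this lemma with Lemma~\ref{lemma-d-star-1} gives $d^{*} = 2\,\hat{\nu}_{(1)}(\lambda)$, which is evidently the form in which this quantity is needed in order to apply the convolution-tail criteria of \cite{Watanabe} to the density of $\eta$ in the sequel.
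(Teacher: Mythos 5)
Your computation is correct and coincides with the paper's own proof: both evaluate $\hat{\nu}_{(1)}(\lambda)$ directly from the definition, use the cancellation of $e^{\lambda x}$ against the tempering factor $e^{-\lambda x}$, and compute $\int_1^\infty x^{-1-\beta}\,dx = 1/\beta$. Nothing further is needed.
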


\begin{proof}
The computation
\begin{align*}
\hat{\nu}_{(1)}(\lambda) = \int_{\mathbb{R}} e^{\lambda x} \nu_{(1)}(dx) = \frac{\alpha}{\nu((1,\infty))} \int_1^{\infty} \frac{1}{x^{1 + \beta}} dx = \frac{\alpha}{\beta \nu((1,\infty))}
\end{align*}
yields the desired identity.
\end{proof}

\begin{lemma}\label{lemma-C-star}
We have the identity
\begin{align*}
\hat{\eta}(\lambda) = \exp ( - \alpha \Gamma(-\beta) \lambda^{\beta} ).
\end{align*}
\end{lemma}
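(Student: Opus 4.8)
The plan is to read the value off the cumulant generating function of the one-sided tempered stable distribution that has already been recorded in the excerpt. Writing $X$ for a random variable with $\mathcal{L}(X) = \eta$, the quantity $\hat\eta(s) = \int_{\mathbb{R}} e^{sx}\,\eta(dx)$ is just $\mathbb{E}[e^{sX}]$, and by the remark following Lemma~\ref{lemma-cf-pre} (see (\ref{cumulant-TS-one-sided})) the cumulant generating function $\Psi(s) = \ln\mathbb{E}[e^{sX}]$ of $\eta = {\rm TS}(\alpha,\beta,\lambda)$ exists on $(-\infty,\lambda]$ and equals $\alpha\Gamma(-\beta)\big[(\lambda - s)^{\beta} - \lambda^{\beta}\big]$ there. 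Hence $\hat\eta(\lambda) = \exp\big(\Psi(\lambda)\big)$, and it only remains to substitute $s = \lambda$.

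Carrying out the substitution, since $\beta \in (0,1)$ we have $(\lambda - \lambda)^{\beta} = 0^{\beta} = 0$, so $\Psi(\lambda) = \alpha\Gamma(-\beta)\big[0 - \lambda^{\beta}\big] = -\alpha\Gamma(-\beta)\lambda^{\beta}$, which gives $\hat\eta(\lambda) = \exp\big(-\alpha\Gamma(-\beta)\lambda^{\beta}\big)$, as claimed. This is essentially a one-line evaluation of an already available formula.

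The only point that deserves a word of care is the finiteness of $\mathbb{E}[e^{\lambda X}]$, i.e.\ that $\lambda$ genuinely lies in the domain of $\Psi$; this is part of the content of (\ref{cumulant-TS-one-sided}), but it can also be seen directly: since $\eta$ is infinitely divisible with no Gaussian part and L\'evy measure $F$ given by (\ref{Levy-measure-one-sided}), one has $\mathbb{E}[e^{sX}] < \infty$ if and only if $\int_{(1,\infty)} e^{sx}\,F(dx) < \infty$, and for $s = \lambda$ this integral equals $\alpha\int_1^{\infty} x^{-1-\beta}\,dx = \alpha/\beta < \infty$ because $\beta > 0$ — the same computation that appears in the proof of Lemma~\ref{lemma-d-star-2}. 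Given finiteness, the value of $\hat\eta(\lambda)$ follows either by continuously extending the identity $f_1 \equiv f_2$ from the proof of Lemma~\ref{lemma-cf-pre} to the boundary point, or, since $\eta$ is supported on $[0,\infty)$, by monotone convergence: $\mathbb{E}[e^{\lambda X}] = \lim_{s\uparrow\lambda}\mathbb{E}[e^{sX}] = \lim_{s\uparrow\lambda}\exp\big(\alpha\Gamma(-\beta)[(\lambda - s)^{\beta} - \lambda^{\beta}]\big) = \exp\big(-\alpha\Gamma(-\beta)\lambda^{\beta}\big)$ by continuity of $t \mapsto t^{\beta}$ at $t = 0$. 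In short, there is no real obstacle here; the lemma records precisely the boundary evaluation needed to compare $d^{*}$ with $2\hat\nu_{(1)}(\lambda)$ and $\hat\eta(\lambda)$ in the tail analysis that follows.
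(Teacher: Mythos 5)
Your proof is correct and follows exactly the paper's route: the paper simply cites the boundary evaluation of the cumulant generating function (\ref{cumulant-TS-one-sided}) at $z=\lambda$, which is what you do. Your additional remarks on the finiteness of $\mathbb{E}[e^{\lambda X}]$ and the monotone-convergence justification of the boundary value are sound but go beyond what the paper spells out.
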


\begin{proof}
This is a direct consequence of (\ref{cumulant-TS-one-sided}).
\end{proof}

\begin{theorem}\label{thm-asymp-one-sided}
We have the asymptotic behaviour
\begin{align}\label{asymp-final-one-sided}
g(x) \sim \alpha \exp ( - \alpha \Gamma(-\beta) \lambda^{\beta} ) \frac{e^{-\lambda x}}{x^{1 + \beta}} \quad \text{as $x \rightarrow \infty$.}
\end{align}
\end{theorem}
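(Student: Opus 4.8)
The plan is to invoke the theory of convolution-equivalent distributions, following \cite{Watanabe} (see also \cite{Embrechts}): I would first transfer the tail behaviour of the L\'evy measure to the distribution $\eta$, and then pass from the tail of $\eta$ to its density $g$. The crucial point is that the normalized L\'evy measure $\nu_{(1)}$ belongs to the convolution-equivalent class $\mathcal{S}(\lambda)$, meaning $\nu_{(1)} \in \mathcal{L}(\lambda)$ together with $\limsup_{r \to \infty} \overline{\nu_{(1)} * \nu_{(1)}}(r)/\overline{\nu_{(1)}}(r) = 2 \hat{\nu}_{(1)}(\lambda) < \infty$. The first requirement is Lemma~\ref{lemma-nu-1}, and the second is exactly what Lemmas~\ref{lemma-d-star-1} and \ref{lemma-d-star-2} provide, since together they give $d^* = 2 \hat{\nu}_{(1)}(\lambda) = 2\alpha/(\beta\,\nu((1,\infty))) < \infty$. (For any $\mathcal{L}(\lambda)$-measure the corresponding $\liminf$ is always at least $2\hat{\nu}_{(1)}(\lambda)$, so the $\limsup$ being equal to this value forces the limit to exist, as required.)

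Once $\nu_{(1)} \in \mathcal{S}(\lambda)$ is established, I would apply the transfer theorem for infinitely divisible distributions whose normalized L\'evy measure is convolution equivalent: by \cite{Watanabe} this yields $\eta \in \mathcal{S}(\lambda)$ and the tail asymptotic
\begin{align*}
\overline{\eta}(r) \sim \hat{\eta}(\lambda)\, \overline{\nu}(r) \quad \text{as } r \to \infty,
\end{align*}
where $\overline{\nu}(r) = \nu((r,\infty))$ is the full (unnormalized) L\'evy tail. By Lemma~\ref{lemma-C-star} we have $\hat{\eta}(\lambda) = \exp(-\alpha \Gamma(-\beta) \lambda^{\beta})$, and a short application of l'H\^opital's rule gives $\overline{\nu}(r) \sim \alpha e^{-\lambda r}/(\lambda r^{1+\beta})$, so that $\overline{\eta}(r) \sim \alpha \exp(-\alpha \Gamma(-\beta) \lambda^{\beta})\, e^{-\lambda r}/(\lambda r^{1+\beta})$.

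It then remains to pass from this tail asymptotic to the density asymptotic (\ref{asymp-final-one-sided}). By Theorem~\ref{thm-unimodal-one-sided}, $g \in C^{\infty}(\mathbb{R})$ is strictly decreasing on $(x_0,\infty)$, and $r \mapsto \alpha e^{-\lambda r}/(\lambda r^{1+\beta})$ is eventually decreasing as well; hence a monotone-density (de Haan type) argument applies, and differentiating the relation for $\overline{\eta}$ gives
\begin{align*}
g(r) = -\frac{d}{dr}\overline{\eta}(r) \sim \alpha \exp(-\alpha \Gamma(-\beta) \lambda^{\beta})\, \frac{e^{-\lambda r}}{r^{1+\beta}},
\end{align*}
which is (\ref{asymp-final-one-sided}). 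Alternatively, one may appeal directly to a density version of the convolution-equivalence transfer theorem for the absolutely continuous L\'evy density, obtaining $g(x) \sim \hat{\eta}(\lambda)\, \alpha e^{-\lambda x}/x^{1+\beta}$ in one step.

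I expect this last step to be the main obstacle: either one must verify the hypotheses of the density version of Watanabe's theorem, or, working only with the tail statement, one must justify carefully the passage from $\overline{\eta}$ to $g$ — and it is precisely the eventual monotonicity of $g$ supplied by Theorem~\ref{thm-unimodal-one-sided} that makes this rigorous, since in general tail asymptotics need not transfer to densities without such regularity.
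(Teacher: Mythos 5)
Your proposal is correct and follows essentially the same route as the paper: Lemmas~\ref{lemma-nu-1}, \ref{lemma-d-star-1} and \ref{lemma-d-star-2} verify the hypotheses of Watanabe's transfer theorem, which yields $\overline{\eta}(r) \sim \hat{\eta}(\lambda)\,\overline{\nu}(r)$ with $\hat{\eta}(\lambda)$ computed in Lemma~\ref{lemma-C-star}. The only divergence is the final tail-to-density step, where the paper applies l'H\^{o}pital's rule directly to the ratio of the integrated tails while you invoke a monotone-density argument based on the eventual monotonicity of $g$ from Theorem~\ref{thm-unimodal-one-sided}; your version is in fact the more careful one, since l'H\^{o}pital in that direction presupposes that the limit of the ratio of the derivatives exists, which is precisely what the monotonicity of $g$ on $(x_0,\infty)$ guarantees.
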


\begin{proof}
By Lemma~\ref{lemma-nu-1} we have $\nu_{(1)} \in \mathcal{L}(\lambda)$ and by Lemmas~\ref{lemma-d-star-1}, \ref{lemma-d-star-2} we have $d^* = 2 \hat{\nu}_{(1)}(\lambda) < \infty$. Thus, \cite[Thm.~2.2.ii]{Watanabe} applies and yields $C_* = C^* = \hat{\eta}(\lambda)$, where
\begin{align*}
C_* := \liminf_{r \rightarrow \infty} \frac{\overline{\eta}(r)}{\overline{\nu}(r)} \quad \text{and} \quad C^* := \limsup_{r \rightarrow \infty} \frac{\overline{\eta}(r)}{\overline{\nu}(r)}.
\end{align*}
Using Lemma~\ref{lemma-C-star} and l'H\^{o}pital's rule we obtain
\begin{align*}
\exp ( - \alpha \Gamma(-\beta) \lambda^{\beta} ) &= \lim_{r \rightarrow \infty} \frac{\overline{\eta}(r)}{\overline{\nu}(r)} = \lim_{r \rightarrow \infty} \int_r^{\infty} g(x) dx \bigg/ \alpha \int_r^{\infty} \frac{e^{-\lambda x}}{x^{1 + \beta}} dx
\\ &= \frac{1}{\alpha} \lim_{r \rightarrow \infty} g(r) \bigg/ \frac{e^{-\lambda r}}{r^{1 + \beta}},
\end{align*}
and hence, we arrive at (\ref{asymp-final-one-sided}).
\end{proof}

Now, we proceed with general two-sided tempered stable distributions. Let
\begin{align*}
\eta = {\rm TS}(\alpha^+,\lambda^+,\beta^+;\alpha^-,\lambda^-,\beta^-)
\end{align*}
be a tempered stable distribution with $\beta^+,\beta^- \in (0,1)$. For bilateral Gamma distributions (i.e. $\beta^+ = \beta^- = 0$) the behaviour of the densities was treated in \cite{Kuechler-Tappe-shapes}.

In view of the L\'{e}vy measure (\ref{Levy-measure-TS}), we can express the characteristic function of $\eta$ as (\ref{cf-self-d}),
where $k : \mathbb{R} \rightarrow \mathbb{R}$ denotes the function
\begin{align*}
k(x) = \alpha^+ \frac{e^{-\lambda^+ x}}{x^{\beta^+}}
\mathbbm{1}_{(0,\infty)}(x) - \alpha^- \frac{e^{- \lambda^-
|x|}}{|x|^{\beta^-}} \mathbbm{1}_{(-\infty,0)}(x), \quad x \in
\mathbb{R}.
\end{align*}
Note that $k \geq 0$ on $(0,\infty)$, that $k \leq 0$ on $(-\infty,0)$ and that $k$ is strictly decreasing on $(0,\infty)$ and $(-\infty,0)$. Furthermore, we have $k(0+) = \infty$, $k(0-) = -\infty$ and $\int_{-1}^1 |k(x)| dx < \infty$.

Arguing as above, $\eta$ is selfdecomposable, absolutely continuous and of class $L$ in the sense of \cite{Sato-Yamazato} with characteristic function being of the form (1.5) in \cite{Sato-Yamazato} with $\gamma_0 = 0$ and $\sigma^2 = 0$. In what follows, we denote by $g$ the density of $\eta$.

Note that $\eta = \eta^+ * \eta^-$ with $\eta^+ = {\rm TS}(\alpha^+,\beta^+,\gamma^+)$ and $\eta^- = \widetilde{\nu}$ with $\nu = {\rm TS}(\alpha^-,\beta^-,\gamma^-)$ and $\widetilde{\nu}$ denoting the dual of $\nu$.

\begin{theorem}\label{thm-unimodal}
The density $g$ of the tempered stable distribution $\eta$ is of class $C^{\infty}(\mathbb{R})$ and there exists a point $x_0 \in \mathbb{R}$ such that (\ref{mode-x0-1})--(\ref{mode-x0-3}) are satisfied. Moreover, we have
$x_0 \in (x_0^-,x_0^+)$, where $x_0^+$ denotes the mode of $\eta^+$ and $x_0^-$ denotes the mode of $\eta^-$.
\end{theorem}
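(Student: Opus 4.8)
The plan is to split the statement into two parts: the $C^\infty$ regularity together with the existence of a mode $x_0$ satisfying \eqref{mode-x0-1}--\eqref{mode-x0-3}, which follow verbatim from the one-sided analysis, and the sandwiching $x_0^- < x_0 < x_0^+$, which is the genuinely new point and is obtained from the convolution structure $\eta = \eta^+ * \eta^-$.

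For the first part I would argue exactly as in the proof of Theorem~\ref{thm-unimodal-one-sided}. Since $k(0+) = \infty$ and $k(0-) = -\infty$, in particular $k(0+) + |k(0-)| = \infty$, so \cite[Thm.~1.2]{Sato-Yamazato} gives $g \in C^\infty(\mathbb{R})$. Because $\int_{-1}^1 |k(x)|\,dx < \infty$, the distribution $\eta$ falls into the (two-sided analogue of the) type considered in \cite[page~275]{Sato-Yamazato} for which \cite[Thm.~1.3]{Sato-Yamazato} applies, yielding a point $x_0 \in \mathbb{R}$ with $g' > 0$ on $(-\infty,x_0)$, $g'(x_0) = 0$ and $g' < 0$ on $(x_0,\infty)$; since $\eta^+$ is concentrated on $[0,\infty)$ and $\eta^-$ on $(-\infty,0]$, the convolution $\eta$ has a strictly positive density on all of $\mathbb{R}$, so there is no conflict between these strict inequalities and the support.

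For the sandwiching I would use that, by Theorem~\ref{thm-unimodal-one-sided} applied to $\eta^+$ and to $\nu$ (then reflected), the densities $g^+$ of $\eta^+$ and $g^-$ of $\eta^-$ are of class $C^\infty(\mathbb{R})$, with $g^+$ unimodal of mode $x_0^+ \in (0,\infty)$ and $g^-$ unimodal of mode $x_0^- \in (-\infty,0)$, and in particular $(g^+)'(z) < 0$ for all $z > x_0^+$ and $(g^-)'(z) > 0$ for all $z < x_0^-$ (these being the parts of the monotonicity that are genuinely strict, since the one-sided densities are strictly positive on those half-lines). As $g^+$ and $g^-$ vanish together with all their derivatives at $0$ and tend to $0$ at $\pm\infty$, their first derivatives are bounded and integrable, so I may differentiate $g = g^+ * \eta^- = \eta^+ * g^-$ under the integral sign to get $g' = (g^+)' * \eta^- = \eta^+ * (g^-)'$. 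Evaluating the first representation at $x_0^+$: $\eta^-$ is carried by $(-\infty,0)$ (it is absolutely continuous, hence atomless at $0$), and for every $y < 0$ one has $x_0^+ - y > x_0^+$, so $(g^+)'(x_0^+ - y) < 0$; hence $g'(x_0^+) = \int_{(-\infty,0)} (g^+)'(x_0^+-y)\,\eta^-(dy) < 0$, and since $g' > 0$ on $(-\infty,x_0)$ and $g'(x_0) = 0$ this forces $x_0 < x_0^+$. Symmetrically, evaluating $g' = \eta^+ * (g^-)'$ at $x_0^-$: $\eta^+$ is carried by $(0,\infty)$, and for $y > 0$ one has $x_0^- - y < x_0^-$, so $(g^-)'(x_0^- - y) > 0$; hence $g'(x_0^-) = \int_{(0,\infty)} (g^-)'(x_0^--y)\,\eta^+(dy) > 0$, which forces $x_0 > x_0^-$. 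Together this gives $x_0 \in (x_0^-,x_0^+)$.

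The routine verifications are the legitimacy of differentiating under the integral sign (handled by the boundedness and integrability of $(g^\pm)'$ noted above) and the absence of atoms of $\eta^\pm$ at the origin (immediate from absolute continuity). The one point needing a little care is to make sure the inequalities $(g^+)'(z) < 0$ for $z > x_0^+$ and $(g^-)'(z) > 0$ for $z < x_0^-$ are strict rather than merely weak; this is fine, because on those half-lines the relevant one-sided tempered stable density is strictly positive, so the strict monotonicity of Theorem~\ref{thm-unimodal-one-sided} holds there without qualification. I expect the main (mild) obstacle to be the bookkeeping of identifying the correct Sato--Yamazato type of the two-sided distribution so as to quote \cite[Thm.~1.3]{Sato-Yamazato} for the strict unimodality of $g$ itself; but this is entirely parallel to the one-sided situation already treated, and the substantive new idea — the convolution argument for the location of $x_0$ — is elementary once the representations $g' = (g^+)' * \eta^- = \eta^+ * (g^-)'$ are in hand.
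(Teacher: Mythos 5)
Your proof of the first two assertions ($C^{\infty}$ regularity and existence of the mode) follows the paper's route exactly: $k(0+)+|k(0-)|=\infty$ gives smoothness via \cite[Thm.~1.2]{Sato-Yamazato}, and $\int_{-1}^1|k(x)|\,dx<\infty$ places $\eta$ in the two-sided type (${\rm III}_6$ in the notation of \cite[p.~275]{Sato-Yamazato}) for which \cite[Thm.~1.3]{Sato-Yamazato} yields the unimodality. For the localization $x_0\in(x_0^-,x_0^+)$, however, you diverge genuinely from the paper: the paper simply observes $|k(0-)|>1$ and cites \cite[Thm.~4.1.iv]{Sato-Yamazato}, whereas you derive the inclusion directly from $g'=(g^+)'*\eta^-=\eta^+*(g^-)'$ and the sign of the integrand at $x_0^{\pm}$. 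Your argument is correct: the integrands $(g^+)'(x_0^+-y)$ for $y<0$ and $(g^-)'(x_0^--y)$ for $y>0$ are strictly negative resp.\ positive on sets of full $\eta^{\mp}$-measure (no atom at $0$ by absolute continuity), forcing $g'(x_0^+)<0$ and $g'(x_0^-)>0$, hence $x_0^-<x_0<x_0^+$ by the already established strict unimodality of $g$. This buys a self-contained, elementary proof of the sandwiching that does not require checking the hypotheses of the convolution theorem in \cite{Sato-Yamazato}, at the cost of one analytic verification the paper avoids: the interchange of differentiation and integration. Your stated justification for that step is slightly too quick --- boundedness of $(g^{\pm})'$ does not follow merely from $g^{\pm}\to 0$ at infinity --- but it is easily repaired, e.g.\ from Lemma~\ref{lemma-cf-pre}: for $\beta\in(0,1)$ one has $|\varphi(z)|\leq C\exp(-c|z|^{\beta})$, so $z\varphi(z)$ is integrable and Fourier inversion bounds $(g^{\pm})'$ globally. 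One further small point: when quoting the strict monotonicity of $g^{\pm}$ you correctly restrict to the half-lines where the one-sided densities are positive (on the complementary half-line the density of a driftless subordinator vanishes identically, so the literal statement (\ref{mode-x0-1}) cannot hold there); this does not affect your argument, since you only use the signs of $(g^+)'$ on $(x_0^+,\infty)$ and of $(g^-)'$ on $(-\infty,x_0^-)$.
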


\begin{proof}
Since $k(0+) + |k(0-)| = \infty$, it follows from \cite[Thm.~1.2]{Sato-Yamazato} that $g \in C^{\infty}(\mathbb{R})$. 

Furthermore, since $\int_{-1}^1 |k(x)| dx < \infty$, the distribution $\eta$ is of type ${\rm III}_6$ in the sense of \cite[page~275]{Sato-Yamazato}. According to \cite[Thm.~1.3.xi]{Sato-Yamazato} there exists a point $x_0 \in \mathbb{R}$ with the claimed properties.

Since $|k(0-)| > 1$, an application of \cite[Thm.~4.1.iv]{Sato-Yamazato} yields $x_0 \in (x_0^-,x_0^+)$.
\end{proof}

\begin{remark}
Using Theorem~\ref{thm-unimodal-one-sided}, the mode $x_0$ from Theorem~\ref{thm-unimodal} is located as
\begin{equation}\label{location-mode}
\begin{aligned}
&- \min \bigg\{ \Gamma(1-\beta^-) \frac{\alpha^-}{(\lambda^-)^{1-\beta^-}}, \bigg( \frac{\alpha^-}{1 - \beta^-} \bigg)^{1 / \beta^-} \bigg\} < x_0 
\\ &< \min \bigg\{ \Gamma(1-\beta^+) \frac{\alpha^+}{(\lambda^+)^{1-\beta^+}}, \bigg( \frac{\alpha^+}{1 - \beta^+} \bigg)^{1 / \beta^+} \bigg\}.
\end{aligned}
\end{equation}
\end{remark}

We shall now investigate the asymptotic behaviour of the densities for large values of $x$. By symmetry, it suffices to consider the situation $x \rightarrow \infty$.

\begin{theorem}
Let $g$ be the density of the tempered stable distribution $\eta$. Then we have the asymptotic behaviour
\begin{align}\label{asymp-final}
g(x) \sim C \frac{e^{-\lambda^+ x}}{x^{1 + \beta^+}} \quad \text{as $x \rightarrow \infty$,}
\end{align}
where the constant $C > 0$ is given by
\begin{align*}
C = \alpha^+ \exp \Big( - \alpha^+ \Gamma(-\beta^+) (\lambda^+)^{\beta^+} + \alpha^- \Gamma(-\beta^-) \big[ (\lambda^+ + \lambda^-)^{\beta^-} - (\lambda^-)^{\beta^-} \big] \Big).
\end{align*}
\end{theorem}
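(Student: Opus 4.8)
The plan is to reduce everything to the one-sided case already settled in Theorem~\ref{thm-asymp-one-sided}, using the factorisation $\eta = \eta^+ * \eta^-$. Write $g^+$ for the density of $\eta^+ = {\rm TS}(\alpha^+,\beta^+,\lambda^+)$ and $g^-$ for the density of $\eta^- = \widetilde{\nu}$ with $\nu = {\rm TS}(\alpha^-,\beta^-,\lambda^-)$; both exist by the selfdecomposability discussed above, and $g^-$ vanishes on $(0,\infty)$ since $\eta^-$ is the law of $-Z$ with $Z \sim \nu \geq 0$. Hence, substituting $u = -y$,
\begin{align*}
g(x) = (g^+ * g^-)(x) = \int_{-\infty}^0 g^+(x-y)\,g^-(y)\,dy = \int_0^\infty g^+(x+u)\,g^-(-u)\,du.
\end{align*}
I would then multiply by $x^{1+\beta^+}e^{\lambda^+ x}$ and pass to the limit $x \to \infty$ inside the integral; this interchange is the one point that requires care.

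For the pointwise limit: by Theorem~\ref{thm-asymp-one-sided} applied to $\eta^+$ we have $g^+(v) \sim \alpha^+\widehat{\eta^+}(\lambda^+)\,e^{-\lambda^+ v}v^{-1-\beta^+}$ as $v \to \infty$, where $\widehat{\eta^+}(\lambda^+) = \exp(-\alpha^+\Gamma(-\beta^+)(\lambda^+)^{\beta^+})$ by Lemma~\ref{lemma-C-star}; hence for each fixed $u \geq 0$, since $x+u \to \infty$ and $(x/(x+u))^{1+\beta^+} \to 1$,
\begin{align*}
x^{1+\beta^+}e^{\lambda^+ x}\,g^+(x+u) \longrightarrow \alpha^+\widehat{\eta^+}(\lambda^+)\,e^{-\lambda^+ u} \qquad (x \to \infty).
\end{align*}
For the dominating function: the map $v \mapsto v^{1+\beta^+}e^{\lambda^+ v}g^+(v)$ is continuous on $[1,\infty)$ (as $g^+ \in C^\infty$) and converges to the finite limit $\alpha^+\widehat{\eta^+}(\lambda^+)$, so it is bounded there by some $M < \infty$; therefore, for $x \geq 1$ and $u \geq 0$, using $x+u \geq \max\{x,1\}$,
\begin{align*}
x^{1+\beta^+}e^{\lambda^+ x}\,g^+(x+u) \leq M \Big( \frac{x}{x+u} \Big)^{1+\beta^+} e^{-\lambda^+ u} \leq M e^{-\lambda^+ u},
\end{align*}
and $u \mapsto M e^{-\lambda^+ u}$ is $g^-(-u)\,du$-integrable because $\int_0^\infty e^{-\lambda^+ u}g^-(-u)\,du = \widehat{\eta^-}(\lambda^+) = \mathbb{E}[e^{\lambda^+ W}]$ with $W \sim \eta^-$, which is at most $1$ since $W \leq 0$. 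Dominated convergence then yields
\begin{align*}
x^{1+\beta^+}e^{\lambda^+ x}\,g(x) \longrightarrow \alpha^+\widehat{\eta^+}(\lambda^+)\,\widehat{\eta^-}(\lambda^+) \qquad (x \to \infty),
\end{align*}
which is exactly the claimed asymptotics with $C = \alpha^+\widehat{\eta^+}(\lambda^+)\,\widehat{\eta^-}(\lambda^+)$.

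It remains to evaluate the constant. We have $\widehat{\eta^-}(\lambda^+) = \mathbb{E}[e^{-\lambda^+ Z}]$ with $Z \sim \nu = {\rm TS}(\alpha^-,\beta^-,\lambda^-)$, and since the cumulant generating function of $\nu$ is given by (\ref{cumulant-TS-one-sided}) on $(-\infty,\lambda^-]$, evaluating at $-\lambda^+$ gives $\widehat{\eta^-}(\lambda^+) = \exp(\alpha^-\Gamma(-\beta^-)[(\lambda^-+\lambda^+)^{\beta^-}-(\lambda^-)^{\beta^-}])$. Combining this with $\widehat{\eta^+}(\lambda^+) = \exp(-\alpha^+\Gamma(-\beta^+)(\lambda^+)^{\beta^+})$ gives precisely the asserted value of $C$; equivalently, $C = \alpha^+\widehat{\eta}(\lambda^+)$, in parallel with Theorem~\ref{thm-asymp-one-sided}. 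The main obstacle is the dominated-convergence step above; everything else is bookkeeping. An alternative route would be to rerun the Watanabe argument of Theorem~\ref{thm-asymp-one-sided} directly for the two-sided $\eta$: the right tail of its L\'evy measure coincides with that of $\eta^+$, so $\nu_{(1)} \in \mathcal{L}(\lambda^+)$ and the computation of $d^*$ is unchanged, giving $\overline{\eta}(r) \sim \widehat{\eta}(\lambda^+)\,\overline{\nu}(r)$ and then, via l'H\^opital's rule, the same density asymptotics.
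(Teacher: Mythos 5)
Your proof is correct, but it takes a genuinely different route from the paper's. The paper reruns the Watanabe machinery directly on the two-sided distribution: it asserts (``arguing as in'' Lemmas~\ref{lemma-nu-1}--\ref{lemma-d-star-2}) that the normalized L\'{e}vy measure of $\eta$ satisfies $\nu_{(1)} \in \mathcal{L}(\lambda^+)$ and $d^* = 2\hat{\nu}_{(1)}(\lambda^+) < \infty$ --- which works because the negative half of the L\'{e}vy measure does not affect the right-tail conditions --- then applies \cite[Thm.~2.2.ii]{Watanabe} to get $\overline{\eta}(r)/\overline{\nu}(r) \rightarrow \hat{\eta}(\lambda^+)$, evaluates $\hat{\eta}(\lambda^+) = \exp(\Psi(\lambda^+))$ via (\ref{cumulant}), and converts the tail asymptotics into density asymptotics by l'H\^{o}pital, exactly as in Theorem~\ref{thm-asymp-one-sided}. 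You instead treat the two-sided case as a corollary of the one-sided one via the convolution $g = g^+ * g^-$, with the one-sided asymptotics supplying the pointwise limit and a clean domination ($v \mapsto v^{1+\beta^+}e^{\lambda^+ v}g^+(v)$ bounded on $[1,\infty)$, integrated against $e^{-\lambda^+ u}g^-(-u)\,du = $ a finite exponential moment of $\eta^-$) justifying the interchange. Your route uses no new input beyond Theorem~\ref{thm-asymp-one-sided} and makes the passage from one-sided to two-sided completely explicit, whereas the paper's route is shorter on the page but leaves the re-verification of the Watanabe hypotheses for the two-sided L\'{e}vy measure to the reader; both arrive at the same constant $C = \alpha^+\hat{\eta}(\lambda^+)$, and your factorization $\hat{\eta}(\lambda^+) = \hat{\eta^+}(\lambda^+)\hat{\eta^-}(\lambda^+)$ with $\hat{\eta^-}(\lambda^+)$ computed from (\ref{cumulant-TS-one-sided}) at $z = -\lambda^+$ is consistent with the paper's evaluation of $\Psi(\lambda^+)$. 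You also correctly note the paper's argument as the alternative at the end.
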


\begin{proof}
Let $\nu := F$ be the L\'{e}vy measure of $\eta$ given by (\ref{Levy-measure-TS}). Arguing as in the proofs of Lemma~\ref{lemma-nu-1} and Lemmas~\ref{lemma-d-star-1}, \ref{lemma-d-star-2} we have $\nu_{(1)} \in \mathcal{L}(\lambda^+)$ and $d^* = 2 \hat{\nu}_{(1)}(\gamma) < \infty$. Thus, \cite[Thm.~2.2.ii]{Watanabe} applies and yields $C_* = C^* = \hat{\eta}(\lambda^+)$. By (\ref{cumulant}) we have
\begin{align*}
\hat{\eta}(\lambda^+) &= \int_{\mathbb{R}} e^{\lambda^+ x} \eta(dx) = \exp(\Psi(\lambda^+))
\\ &= \exp \Big( - \alpha^+ \Gamma(-\beta^+) (\lambda^+)^{\beta^+} + \alpha^- \Gamma(-\beta^-) \big[ (\lambda^+ + \lambda^-)^{\beta^-} - (\lambda^-)^{\beta^-} \big] \Big),
\end{align*}
and hence, proceeding as in the proof of Theorem~\ref{thm-asymp-one-sided} we arrive at (\ref{asymp-final}).
\end{proof}

\begin{remark}
We shall now compare the densities of general tempered stable distributions with those of bilateral Gamma distributions ($\beta^+ = \beta^- = 0$):

\begin{itemize}
\item The densities of tempered stable distributions are generally not available in closed form. For bilateral Gamma distributions we have a representation in terms of the Whittaker function, see \cite[Section~3]{Kuechler-Tappe-shapes}.

\item Theorem~\ref{thm-unimodal} and \cite[Thm.~5.1]{Kuechler-Tappe-shapes} show that both, tempered stable and bilateral Gamma distributions, are unimodal.

\item The location of the mode $x_0$ is generally not known, but, due to (\ref{location-mode}) we can determine an interval in which it is located. For some results regarding the mode of bilateral Gamma densities, we refer to \cite[Prop.~5.2]{Kuechler-Tappe-shapes}.

\item According to Theorem~\ref{thm-unimodal}, the densities of tempered stable distributions are of class $C^{\infty}(\mathbb{R})$. This is not true for bilateral Gamma distributions, where the degree of smoothness depends on the parameters $\alpha^+, \alpha^-$. More precisely, we have $g \in C^N(\mathbb{R} \setminus \{0\})$ and $g \in C^{N-1}(\mathbb{R}) \setminus C^N(\mathbb{R})$, where $N \in \mathbb{N}_0$ is the unique integer such that $N < \alpha^+ + \alpha^- \leq N+1$, see \cite[Thm.~4.1]{Kuechler-Tappe-shapes}.

\item For tempered stable distributions the densities have the asymptotic behaviour (\ref{asymp-final}). In contrast to this, the densities of bilateral Gamma distributions have the asymptotic behaviour
\begin{align*}
g(x) \sim C x^{\alpha^+ - 1} e^{-\lambda^+ x} \quad \text{as $x \rightarrow \infty$,}
\end{align*}
for some constant $C > 0$, see \cite[Section~6]{Kuechler-Tappe}.
\end{itemize}
\end{remark}

\section{Density transformations of tempered stable processes}\label{sec-density-transformations}

Equivalent changes of measure are important for applications, e.g. for option pricing in financial mathematics.
The purpose of the present section is to determine all locally equivalent measure changes under which $X$ remains tempered stable. This was already outlined in \cite[Example~9.1]{Cont-Tankov}. Here, we are also interested in the corresponding density process. For the computation of the density process we will use that $X = X^+ - X^-$ can be decomposed as the difference of two independent subordinators.

In order to apply the results from Section~33 in \cite{Sato}, we assume that $\Omega = \mathbb{D}(\mathbb{R}_+)$ is the space of c\`{a}dl\`{a}g functions equipped with the natural filtration $\mathcal{F}_t = \sigma(X_s : s \in [0,t])$ and the $\sigma$-algebra $\mathcal{F} = \sigma(X_t : t \geq 0)$, where $X$ denotes the canonical process $X_t(\omega) = \omega(t)$. Let $\mathbb{P}$ be a probability measure on $(\Omega,\mathcal{F})$ such that
\begin{align*}
X \sim {\rm TS}(\alpha_1^+,\beta_1^+,\lambda_1^+;\alpha_1^-,\beta_1^-,\lambda_1^-) 
\end{align*}
is a tempered stable process. Furthermore, let $\mathbb{Q}$ be another probability measure on $(\Omega,\mathcal{F})$ such that 
\begin{align*}
X \sim {\rm TS}(\alpha_2^+,\beta_2^+,\lambda_2^+;\alpha_2^-,\beta_2^-,\lambda_2^-) 
\end{align*}
under $\mathbb{Q}$. We shall now investigate, under which conditions the probability measures $\mathbb{P}$ and $\mathbb{Q}$ are locally equivalent.

\begin{proposition}\label{prop-TS-eq-measure-change}
The following statements are equivalent:
\begin{enumerate}
\item The measures $\mathbb{P}$ and $\mathbb{Q}$ are locally equivalent.

\item We have $\alpha_1^+ = \alpha_2^+$, $\alpha_1^- = \alpha_2^-$, $\beta_1^+ = \beta_2^+$ and $\beta_1^- = \beta_2^-$.
\end{enumerate}
\end{proposition}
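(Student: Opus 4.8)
Under both $\mathbb{P}$ and $\mathbb{Q}$ the canonical process $X$ is a L\'{e}vy process of finite variation with vanishing genuine drift, so in the notation of \cite{Sato} its generating triplet is $(0,F_j,\gamma_j)$, where $F_j$ is the L\'{e}vy measure (\ref{Levy-measure-TS}) with the parameters carrying the index $j\in\{1,2\}$ and $\gamma_j=\int_{|x|\le1}x\,F_j(dx)$ (a finite integral since $\beta_j^\pm<1$). My plan is to invoke the equivalence/singularity dichotomy for L\'{e}vy processes, \cite[Thm.~33.1]{Sato}, which asserts that $\mathbb{P}$ and $\mathbb{Q}$ are locally equivalent if and only if the Gaussian parts coincide (here trivially, both being $0$), the genuine drifts $\gamma_j-\int_{|x|\le1}x\,F_j(dx)$ coincide (here trivially, both being $0$), and $F_1$ and $F_2$ are equivalent measures with
\begin{align*}
\int_{\mathbb{R}}\big(e^{\phi(x)/2}-1\big)^2\,F_2(dx)<\infty,\qquad \phi:=\log\frac{dF_1}{dF_2}.
\end{align*}
Thus the entire statement reduces to deciding when this last integrability condition holds.

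First I would note that $F_1\sim F_2$ is automatic, since both L\'{e}vy measures possess a strictly positive Lebesgue density on $\mathbb{R}\setminus\{0\}$. Moreover the positive and negative half-axes carry the two mutually singular pieces of each $F_j$, and on $(0,\infty)$ the density ratio is $\tfrac{\alpha_1^+}{\alpha_2^+}\,x^{\beta_2^+-\beta_1^+}\,e^{-(\lambda_1^+-\lambda_2^+)x}$, with the analogous expression on $(-\infty,0)$ involving only the ``$-$''-parameters. Hence the integral above decomposes as a sum of a ``$+$''-part over $(0,\infty)$ and a ``$-$''-part over $(-\infty,0)$, each of which is a one-sided integral of the form
\begin{align*}
\int_0^\infty\big(e^{\phi^+(x)/2}-1\big)^2\,\alpha_2^+\,x^{-1-\beta_2^+}\,e^{-\lambda_2^+ x}\,dx,\qquad \phi^+(x)=\log\tfrac{\alpha_1^+}{\alpha_2^+}+(\beta_2^+-\beta_1^+)\log x-(\lambda_1^+-\lambda_2^+)x,
\end{align*}
so it is enough to settle finiteness of the ``$+$''-integral.

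I would split this integral at $x=1$. On $(1,\infty)$ the integrand is at most $2e^{\phi^+(x)}+2$, and $\int_1^\infty e^{\phi^+(x)}\,\alpha_2^+x^{-1-\beta_2^+}e^{-\lambda_2^+x}\,dx=F_1((1,\infty))<\infty$ while $\int_1^\infty\alpha_2^+x^{-1-\beta_2^+}e^{-\lambda_2^+x}\,dx<\infty$, so the tail is always finite, regardless of how $\lambda_1^+$ compares to $\lambda_2^+$. On $(0,1]$ the two exponential factors are bounded above and below, so finiteness is governed by $e^{\phi^+(x)/2}\asymp x^{(\beta_2^+-\beta_1^+)/2}$ weighted against $x^{-1-\beta_2^+}$ as $x\downarrow0$. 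Three cases then arise: if $\alpha_1^+=\alpha_2^+$ and $\beta_1^+=\beta_2^+$ then $e^{\phi^+(x)/2}-1=O(x)$, the integrand is $O(x^{1-\beta_2^+})$ and the integral converges; if $\beta_1^+=\beta_2^+$ but $\alpha_1^+\neq\alpha_2^+$ then $(e^{\phi^+(x)/2}-1)^2$ tends to the positive constant $(\sqrt{\alpha_1^+/\alpha_2^+}-1)^2$, so the integral behaves like $\int_0 x^{-1-\beta_2^+}\,dx=\infty$; and if $\beta_1^+\neq\beta_2^+$ then $e^{\phi^+(x)/2}$ tends to $0$ or to $\infty$ according to the sign of $\beta_2^+-\beta_1^+$, giving either $(e^{\phi^+/2}-1)^2\to1$ (so the integral diverges like $\int_0 x^{-1-\beta_2^+}\,dx$) or $(e^{\phi^+/2}-1)^2\asymp x^{\beta_2^+-\beta_1^+}$ (so it diverges like $\int_0 x^{-1-\beta_1^+}\,dx$). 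Thus the ``$+$''-integral is finite exactly when $\alpha_1^+=\alpha_2^+$ and $\beta_1^+=\beta_2^+$, and by symmetry the ``$-$''-integral is finite exactly when $\alpha_1^-=\alpha_2^-$ and $\beta_1^-=\beta_2^-$; combined with the automatic conditions on Gaussian part and drift, \cite[Thm.~33.1]{Sato} then yields the equivalence of (1) and (2).

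The only genuine obstacle is the small-jump case analysis above; everything else is bookkeeping. Two small technical points to handle carefully: turning the asymptotics $e^{\phi^+(x)/2}\asymp x^{(\beta_2^+-\beta_1^+)/2}$ into honest two-sided bounds on $(0,1]$ that may legitimately be integrated, and justifying the splitting of the triplet-equivalence integral over the two half-axes, which is permissible because the positive and negative parts of each $F_j$ are mutually singular and their density ratios involve disjoint blocks of parameters.
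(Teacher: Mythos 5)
Your proposal is correct and follows essentially the same route as the paper: both reduce the claim to \cite[Thm.~33.1]{Sato} and to the finiteness of the Hellinger-type integral $\int(1-\sqrt{d F_2/d F_1})^2\,dF_1$, split over the two half-axes. The only difference is one of detail: the paper rewrites the integrand as $(\sqrt{f_1(x)}-\sqrt{f_2(x)})^2$ and simply asserts the finiteness criterion, whereas you carry out the tail bound and the case analysis at the origin explicitly, which is a welcome (and correct) elaboration of the step the paper leaves to the reader.
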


\begin{proof}
The Radon-Nikodym derivative $\Phi = \frac{d F_2}{d F_1}$ of the L\'{e}vy measures is given by
\begin{align*}
\Phi(x) &= \frac{\alpha_2^+}{\alpha_1^+} x^{\beta_1^+ - \beta_1^-} e^{-(\lambda_2^+ - \lambda_1^+)x} \mathbbm{1}_{(0,\infty)}(x) 
\\ &\quad + \frac{\alpha_2^-}{\alpha_1^-} |x|^{\beta_2^+ - \beta_2^-} e^{-(\lambda_2^- - \lambda_1^-)|x|} \mathbbm{1}_{(-\infty,0)}(x), \quad x \in \mathbb{R}.
\end{align*}
According to \cite[Thm.~33.1]{Sato}, the measures $\mathbb{P}$ and $\mathbb{Q}$ are locally equivalent if and only if
\begin{align}\label{cond-Sato}
\int_{\mathbb{R}} \Big( 1 - \sqrt{\Phi(x)} \Big)^2 F_1(dx) < \infty.
\end{align}
Since we have
\begin{align*}
&\int_{\mathbb{R}} \Big( 1 - \sqrt{\Phi(x)} \Big)^2 F_1(dx) 
\\ &= \int_0^{\infty} \bigg( 1 - \sqrt{\frac{\alpha_2^+}{\alpha_1^+}} x^{(\beta_1^+ - \beta_1^-)/2} e^{-(\lambda_2^+ - \lambda_1^+) x/2} \bigg)^2 \frac{\alpha_1^+}{x^{1 + \beta_1^+}} e^{-\lambda_1^+ x} dx
\\ &\quad + \int_{-\infty}^0 \bigg( 1 - \sqrt{\frac{\alpha_2^-}{\alpha_1^-}} |x|^{(\beta_2^+ - \beta_2^-)/2} e^{-(\lambda_2^- - \lambda_1^-)|x|/2} \bigg)^2 \frac{\alpha_2^+}{|x|^{1 + \beta_2^+}} e^{-\lambda_2^+ |x|} dx
\\ &= \int_0^{\infty} \bigg( \sqrt{\alpha_1^+} x^{-(1 + \beta_1^+)/2} e^{-(\lambda_1^+ / 2) x} - \sqrt{\alpha_2^+} x^{-(1 + \beta_2^+)/2} e^{-(\lambda_2^+ / 2 ) x} \bigg)^2 dx
\\ &\quad + \int_0^{\infty} \bigg( \sqrt{\alpha_1^-} x^{-(1 + \beta_1^-)/2} e^{-(\lambda_1^- / 2) x} - \sqrt{\alpha_2^-} x^{-(1 + \beta_2^-)/2} e^{-(\lambda_2^- / 2) x} \bigg)^2 dx,
\end{align*}
condition (\ref{cond-Sato}) is satisfied if and only if we have $\alpha_1^+ = \alpha_2^+$, $\alpha_1^- = \alpha_2^-$, $\beta_1^+ = \beta_2^+$ and $\beta_1^- = \beta_2^-$.
\end{proof}

Now suppose that $\alpha_1^+ = \alpha_2^+ =: \alpha^+$, $\alpha_1^- = \alpha_2^- =: \alpha^-$, $\beta_1^+ = \beta_2^+ =: \beta^+$ and $\beta_1^- = \beta_2^- =: \beta^-$. We shall determine the Radon-Nikodym derivatives $\frac{d \mathbb{Q}}{d \mathbb{P}} |_{\mathcal{F}_t}$ for $t \geq 0$.

We decompose $X = X^+ - X^-$ as the difference of two  independent one-sided tempered stable subordinators and denote by $\Psi^+, \Psi^-$ the respective cumulant generating functions, which can be computed by means of (\ref{cumulant-TS-one-sided}), (\ref{cumulent-Gamma-one-sided}).

\begin{definition}\label{def-bilateral-Esscher}
Let $\theta^+ \in (-\infty,\lambda_1^+)$ and
$\theta^- \in (-\infty,\lambda_1^-)$ be arbitrary. The {\rm bilateral
Esscher transform} is defined by
\begin{align*}
\frac{d \mathbb{P}^{(\theta^+,\theta^-)}}{d
\mathbb{P}}\bigg|_{\mathcal{F}_t} &:= \exp \big( \theta^+ X_t^+ - \Psi^+(\theta^+) t \big) 
\times \exp \big( \theta^- X_t^- - \Psi^-(\theta^-) t \big), \quad t \geq 0.
\end{align*}
\end{definition}

The following result shows that the measure transformation from Proposition~\ref{prop-TS-eq-measure-change} is a bilateral
Esscher transform.

\begin{proposition}
We have $\mathbb{Q} = \mathbb{P}^{(\lambda_1^+ - \lambda_2^+, \lambda_1^- - \lambda_2^-)}$.
\end{proposition}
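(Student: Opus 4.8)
The plan is to identify $\mathbb{Q}$ with the bilateral Esscher transform by a direct computation of laws, exploiting the decomposition $X = X^+ - X^-$ into two independent subordinators. Set $\theta^+ := \lambda_1^+ - \lambda_2^+$ and $\theta^- := \lambda_1^- - \lambda_2^-$. Since $\lambda_2^+,\lambda_2^- > 0$ we have $\theta^+ < \lambda_1^+$ and $\theta^- < \lambda_1^-$, so by Definition~\ref{def-bilateral-Esscher} the measure $\mathbb{P}' := \mathbb{P}^{(\theta^+,\theta^-)}$ is well defined: $\exp(\theta^+ X_t^+ - \Psi^+(\theta^+)t)$ is the Esscher martingale of the subordinator $X^+$ (finite since $\Psi^+$ exists on $(-\infty,\lambda_1^+]$), likewise for $X^-$, and the product of these two martingales is again a martingale because $X^+$ and $X^-$ are independent under $\mathbb{P}$.

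First I would check that under $\mathbb{P}'$ the process $X$ is again tempered stable with the parameters of $\mathbb{Q}$. Because $\frac{d\mathbb{P}'}{d\mathbb{P}}\big|_{\mathcal{F}_t}$ factors as a product $M_t^+ M_t^-$ with $M_t^{\pm}$ a function of $X^{\pm}$ only and $\mathbb{E}_{\mathbb{P}}[M_t^{\pm}] = 1$, under $\mathbb{P}'$ the subordinators $X^+$ and $X^-$ remain independent, $X^+$ now carrying the $\theta^+$-Esscher law and $X^-$ the $\theta^-$-Esscher law. Consequently the cumulant generating function of $X_t^+$ under $\mathbb{P}'$ equals $t\big(\Psi^+(z+\theta^+) - \Psi^+(\theta^+)\big)$, and inserting (\ref{cumulant-TS-one-sided}) — respectively (\ref{cumulent-Gamma-one-sided}) when $\beta^+ = 0$ — together with $\lambda_1^+ - \theta^+ = \lambda_2^+$, this collapses to $t\,\alpha^+\Gamma(-\beta^+)\big[(\lambda_2^+ - z)^{\beta^+} - (\lambda_2^+)^{\beta^+}\big]$. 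Hence $X^+ \sim {\rm TS}(\alpha^+,\beta^+,\lambda_2^+)$ under $\mathbb{P}'$, and symmetrically $X^- \sim {\rm TS}(\alpha^-,\beta^-,\lambda_2^-)$. Therefore $X = X^+ - X^-$ is, under $\mathbb{P}'$, a tempered stable process with $X \sim {\rm TS}(\alpha^+,\beta^+,\lambda_2^+;\alpha^-,\beta^-,\lambda_2^-)$, which is exactly the law prescribed under $\mathbb{Q}$.

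Finally I would conclude by uniqueness of the law of a Lévy process on the canonical space. Both $\mathbb{Q}$ and $\mathbb{P}'$ make the canonical process $X$ a Lévy process whose one-dimensional marginal at time $1$ is ${\rm TS}(\alpha^+,\beta^+,\lambda_2^+;\alpha^-,\beta^-,\lambda_2^-)$; hence they induce identical finite-dimensional distributions, so they agree on the $\pi$-system of cylinder events and therefore on $\mathcal{F} = \sigma(X_t : t \ge 0)$. This yields $\mathbb{Q} = \mathbb{P}' = \mathbb{P}^{(\lambda_1^+ - \lambda_2^+,\,\lambda_1^- - \lambda_2^-)}$.

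The only genuinely delicate point I anticipate is the bookkeeping that the factorized Esscher density $M_t^+ M_t^-$ really produces, under $\mathbb{P}'$, two independent subordinators with the claimed individual laws; once that is in place the cumulant computation is a one-line substitution, and the identity $(\lambda_1^{\pm} - z - \theta^{\pm})^{\beta^{\pm}} = (\lambda_2^{\pm} - z)^{\beta^{\pm}}$ — needed only for real $z$ in the range where the cumulant generating functions exist — raises no branch-of-logarithm issues.
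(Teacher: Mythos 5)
Your proof is correct, but it travels in the opposite direction from the paper's. The paper computes the Radon--Nikodym derivative $\Phi = dF_2/dF_1$ of the L\'{e}vy measures and invokes \cite[Thm.~33.2]{Sato}, which gives the density process explicitly as $\exp\big( \sum_{s \leq t} \ln \Phi(\Delta X_s) - t \int_{\mathbb{R}} (\Phi(x)-1) F_1(dx) \big)$; it then identifies the jump sum with $(\lambda_1^+ - \lambda_2^+) X_t^+ + (\lambda_1^- - \lambda_2^-) X_t^-$ and the integral with $\Psi^+(\lambda_1^+ - \lambda_2^+) + \Psi^-(\lambda_1^- - \lambda_2^-)$, landing directly on the bilateral Esscher density. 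You instead start from the candidate measure $\mathbb{P}^{(\theta^+,\theta^-)}$, show via the factorized density $M_t^+ M_t^-$ that $X^+$ and $X^-$ remain independent subordinators with the Esscher-shifted cumulant generating functions, read off that $X$ has the law of ${\rm TS}(\alpha^+,\beta^+,\lambda_2^+;\alpha^-,\beta^-,\lambda_2^-)$, and conclude by uniqueness of the law on the canonical space. The paper's route buys the density-process formula with no appeal to uniqueness of measures, at the cost of citing Sato's representation theorem for density processes; yours is more elementary (only the standard Esscher computation $\Psi^+(z+\theta^+)-\Psi^+(\theta^+)$ and preservation of independence under product densities) but leans essentially on the canonical setup $\mathcal{F}_t = \sigma(X_s : s \in [0,t])$, $\mathcal{F} = \sigma(X_t : t \geq 0)$, without which ``same law'' would not force ``same measure.'' The one step you should make explicit is that the Esscher transform of each subordinator again yields a process with stationary independent increments (so that the law under $\mathbb{P}^{(\theta^+,\theta^-)}$ is determined by the time-one marginal); this is standard and follows from the multiplicative structure of the density over disjoint increments, but it is the load-bearing hypothesis of your uniqueness argument. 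With that remark added, your argument is a complete and valid alternative proof.
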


\begin{proof}
The Radon-Nikodym derivative $\Phi = \frac{d F_2}{d F_1}$ of the L\'{e}vy measures is given by
\begin{align*}
\Phi(x) = \frac{d F_2}{d F_1} = e^{-(\lambda_2^+ - \lambda_1^+)x} \mathbbm{1}_{(0, \infty)}(x) + e^{-(\lambda_2^- - \lambda_1^-)|x|} \mathbbm{1}_{(-\infty, 0)}(x).
\end{align*}
A straightforward calculations shows that
\begin{align*}
\sum_{s \leq t} \ln \Phi(\Delta X_s) &= \sum_{s \leq t} \ln e^{-(\lambda_2^+ - \lambda_1^+) \Delta X_s^+} + \sum_{s \leq t} \ln e^{-(\lambda_2^- -  \lambda_1^-) \Delta X_s^-}
\\ &= (\lambda_1^+ - \lambda_2^+) \sum_{s \leq t} \Delta X_s^+ + (\lambda_1^- - \lambda_2^-) \sum_{s \leq t} \Delta X_s^- 
\\ &= (\lambda_1^+ - \lambda_2^+) X_t^+ + (\lambda_1^- - \lambda_2^-) X_t^-, \quad t \geq 0
\end{align*}
as well as
\begin{align*}
&\int_{\mathbb{R}} ( \Phi(x) - 1 ) F_1(dx) 
\\ &= \int_{\mathbb{R}} \Big( e^{-(\lambda_2^+ - \lambda_1^+)x} \mathbbm{1}_{(0, \infty)}(x) + e^{-(\lambda_2^- - \lambda_1^-)|x|} \mathbbm{1}_{(-\infty, 0)}(x) - 1 \Big) F_1(dx)
\\ &= \alpha^+ \int_0^{\infty} \frac{e^{-\lambda_2^+ x} - e^{-\lambda_1^+ x}}{x^{1 + \beta}} dx + \alpha^- \int_0^{\infty} \frac{e^{-\lambda_2^- x} - e^{-\lambda_1^- x}}{x^{1 + \beta}} dx
\\ &= \int_{\mathbb{R}} (e^{(\lambda_2^+ - \lambda_1^+)x} - 1) F_1^+(dx) + \int_{\mathbb{R}} (e^{(\lambda_2^- - \lambda_1^-)x} - 1) F_1^-(dx)
\\ &= \Psi^+(\lambda_1^+ - \lambda_2^+) + \Psi^-(\lambda_1^- - \lambda_2^-).
\end{align*}
According to \cite[Thm.~33.2]{Sato} the Radon-Nikodym derivatives are given by
\begin{align*}
\frac{d \mathbb{Q}}{d \mathbb{P}} \bigg|_{\mathcal{F}_t} &= \exp \bigg( \sum_{s \leq t} \ln \Phi(\Delta X_s) - t \int_{\mathbb{R}} (\Phi(x) - 1) F_1(dx) \bigg)
\\ &= \exp \Big( (\lambda_1^+ - \lambda_2^+) X_t^+ - \Psi^+(\lambda_1^+ - \lambda_2^+) t \Big) 
\\ &\quad \times \exp \Big( (\lambda_1^- - \lambda_2^-) X_t^- - \Psi^-(\lambda_1^- - \lambda_2^-) t \Big)
\\ &= \frac{d \mathbb{P}^{(\theta^+,\theta^-)}}{d
\mathbb{P}}\bigg|_{\mathcal{F}_t}, \quad t \geq 0
\end{align*}
which finishes the proof.
\end{proof}

\section{The $p$-variation index of a tempered stable process}\label{sec-p-variation}

In this section, we compute the $p$-variation index, which is a measure of the smoothness of the sample paths, for tempered stable processes. The $p$-variation has been investigated in various different contexts, see, e.g., \cite{Blumenthal, Greenwood-1, Greenwood-2, Monroe, BN-She-2003, Corcuera, Jacod-1, Jacod-2, Hein}.

For $t \geq 0$ let $\mathcal{Z}[0,t]$ be set of all decompositions 
\begin{align*}
\Pi = \{ 0 = t_0 < t_1 < \ldots < t_n = t \}
\end{align*}
of the interval $[0,t]$.
For a function $f : \mathbb{R}_+ \rightarrow \mathbb{R}$ we define the \emph{$p$-variation} $V_p(f) : \mathbb{R}_+ \rightarrow \overline{\mathbb{R}}_+$ as
\begin{align*}
V_p(f)_t := \sup_{\Pi \in \mathcal{Z}[0,t]} \sum_{\genfrac{}{}{0pt}{}{t_i \in \Pi}{t_i \neq t}} |f_{t_{i+1}} - f_{t_{i}}|^p, \quad t \geq 0.
\end{align*}
Note that for any $t \geq 0$ the relation $V_p(f)_t < \infty$ implies that $V_q(f)_t < \infty$ for all $q > p$. 

\begin{remark}\label{remark-Cantor}
There exist functions $f : \mathbb{R}_+ \rightarrow \mathbb{R}$ with $V_p(f)_t = \infty$ for all $t > 0$ and all $p > 0$. Indeed, let $f := \mathbbm{1}_{\mathbb{Q}_+}$ and fix an arbitrary $t > 0$. For $n \in \mathbb{N}$ we set $t_i := \frac{i}{n} t$, $i = 0,\ldots,n$, for each $i = 1,\ldots,n$ we choose an irrational number $s_i \in \mathbb{R} \setminus \mathbb{Q}$ with $t_{i-1} < s_i < t_i$ and we define the partition
\begin{align*}
\Pi_n := \{ 0 = t_0 < s_1 < t_1 < \ldots < s_n < t_n = t \}.
\end{align*}
Then, for each $p > 0$ we have
\begin{align*}
\sum_{\genfrac{}{}{0pt}{}{u_i \in \Pi_n}{u_i \neq t}} |f_{u_{i+1}} - f_{u_{i}}|^p = 2n \rightarrow \infty \quad \text{for $n \rightarrow \infty$,}
\end{align*}
showing that $V_p(f)_t = \infty$.
\end{remark}

For a function $f : \mathbb{R}_+ \rightarrow \mathbb{R}$ we define the \emph{$p$-variation index}
\begin{align*}
\gamma(f) := \inf \{ p > 0 : V_p(f)_t < \infty \text{ for all } t \geq 0 \},
\end{align*}
where we agree to set $\inf \emptyset := \infty$. The $p$-variation index is a measure of the smoothness of a function $f$ in the sense that smaller $p$-variation indices mean smoother behaviour of $f$. Here are some examples:
\begin{itemize}
\item For every absolutely continuous function $f$ we have $\gamma(f) \leq 1$;

\item For a Brownian motion $W$ we have $\gamma(W(\omega)) = 2$ for almost all $\omega \in \Omega$;

\item For the function $f := \mathbbm{1}_{\mathbb{Q}_+}$ we have $\gamma(f) = \infty$, see Remark~\ref{remark-Cantor}.
\end{itemize}
In fact, for a L\'{e}vy process $X$ the $p$-variation index $\gamma(X(\omega))$ does not depend on $\omega \in \Omega$. In order to determine $\gamma(X)$, we introduce the
\emph{Blumenthal-Getoor index $\beta(X)$} (which goes back to \cite{Blumenthal}) as
\begin{align*}
\beta(X) := \inf \bigg\{ p > 0 : \int_{-1}^1 |x|^p F(dx) < \infty \bigg\},
\end{align*}
where $F$ denotes the L\'{e}vy measure of $X$. Note that $\beta(X) \leq 2$. According to \cite[Thm.~4.1, 4.2]{Blumenthal} and \cite[Thm.~2]{Monroe} we have $\gamma(X) = \beta(X)$ almost surely.
Moreover, by \cite[Thm.~3.1, 3.3]{Blumenthal} we have almost surely
\begin{align*}
&\lim_{t \rightarrow 0} t^{-1/p} X_t = 0, \quad p > \beta(X)
\\ &\limsup_{t \rightarrow 0} t^{-1/p} |X_t| = \infty, \quad p < \beta(X).
\end{align*}
Now, let $X$ be a tempered stable process
\begin{align*}
X \sim {\rm TS}(\alpha^+,\beta^+,\lambda^+;\alpha^-,\beta^-,\lambda^-). 
\end{align*}

\begin{proposition}
We have $\gamma(X) = \beta(X) = \max\{ \beta^+, \beta^- \}$.
\end{proposition}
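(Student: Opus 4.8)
The plan is to reduce the statement to the Blumenthal--Getoor index, which the text has already tied to the $p$-variation index: since $X$ is a L\'evy process, $\gamma(X(\omega)) = \beta(X)$ for every $\omega$ by \cite[Thm.~4.1, 4.2]{Blumenthal} together with \cite[Thm.~2]{Monroe}, as recalled above. So it suffices to show $\beta(X) = \max\{\beta^+,\beta^-\}$.

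To identify $\beta(X)$, I would insert the explicit L\'evy measure (\ref{Levy-measure-TS}) and compute, for $p > 0$,
\[
\int_{-1}^1 |x|^p\, F(dx) = \alpha^+ \int_0^1 x^{p-1-\beta^+} e^{-\lambda^+ x}\,dx + \alpha^- \int_0^1 x^{p-1-\beta^-} e^{-\lambda^- x}\,dx,
\]
where the part over $(-1,0)$ has been turned into an integral over $(0,1)$ by the substitution $x \mapsto -x$. Since $e^{-\lambda^\pm x}$ is continuous and strictly positive on $[0,1]$, each summand on the right is finite exactly when the singularity at $x = 0$ is integrable, i.e. when $p - 1 - \beta^\pm > -1$, equivalently $p > \beta^\pm$; the behaviour near $x = 1$ is harmless. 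Hence $\int_{-1}^1 |x|^p\, F(dx) < \infty$ if and only if $p > \beta^+$ and $p > \beta^-$, that is, $p > \max\{\beta^+,\beta^-\}$.

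Taking the infimum over these $p$ gives $\beta(X) = \max\{\beta^+,\beta^-\}$, and together with $\gamma(X) = \beta(X)$ the proposition follows. I do not expect any genuine obstacle here: this is essentially a one-line computation with the explicit L\'evy density. The only point worth recording is that at the boundary value $p = \max\{\beta^+,\beta^-\}$ the corresponding integral $\int_0^1 x^{-1} e^{-\lambda^\pm x}\,dx$ diverges, so the infimum is exactly $\max\{\beta^+,\beta^-\}$ and not smaller; this is also consistent with the general bound $\beta(X) \le 2$ noted above.
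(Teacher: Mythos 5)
Your proposal is correct and follows the same route as the paper: both reduce to the Blumenthal--Getoor index via the cited results of Blumenthal--Getoor and Monroe, then compute $\int_{-1}^1 |x|^p\,F(dx)$ explicitly from (\ref{Levy-measure-TS}) and observe finiteness holds exactly for $p > \max\{\beta^+,\beta^-\}$. Your extra remark that the integral diverges at the boundary value is a sensible clarification but the argument is otherwise identical.
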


\begin{proof}
For any $p > 0$ we have
\begin{align*}
\int_{-1}^1 |x|^p F(dx) &= \alpha^+ \int_0^1 \frac{e^{-\lambda^+ x}}{x^{1 + \beta^+ - p}} dx + \alpha^- \int_{-1}^0 \frac{e^{-\lambda^- |x|}}{|x|^{1 + \beta^- - p}} dx
\\ &= \alpha^+ \int_0^1 \frac{e^{-\lambda^+ x}}{x^{1 + \beta^+ - p}} dx + \alpha^- \int_{0}^1 \frac{e^{-\lambda^- x}}{x^{1 + \beta^- - p}} dx,
\end{align*}
which is finite if and only if $p > \max \{ \beta^+, \beta^- \}$. This shows $\beta(X) = \max\{ \beta^+, \beta^- \}$. Since $\gamma(X) = \beta(X)$, the proof is complete.
\end{proof}

We obtain the following characterization of bilateral Gamma processes within the class of tempered stable processes:
 
\begin{corollary}
We have $\gamma(X) = 0$ if and only if $X$ is a bilateral Gamma process.
\end{corollary}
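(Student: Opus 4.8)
The plan is to read off the result directly from the preceding Proposition, which gives $\gamma(X) = \max\{\beta^+,\beta^-\}$ for any tempered stable process $X \sim {\rm TS}(\alpha^+,\beta^+,\lambda^+;\alpha^-,\beta^-,\lambda^-)$. Since our standing assumption is $\beta^+,\beta^- \in [0,1)$, both exponents are nonnegative, so $\max\{\beta^+,\beta^-\} = 0$ holds if and only if $\beta^+ = \beta^- = 0$.

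First I would invoke the Proposition to replace the condition $\gamma(X) = 0$ by the equivalent condition $\max\{\beta^+,\beta^-\} = 0$. Then I would use the elementary fact that, for nonnegative reals, the maximum vanishes exactly when both entries vanish, to rewrite this as $\beta^+ = \beta^- = 0$. Finally I would recall (as recorded in Section~\ref{sec-TS}) that the subclass $\beta^+ = \beta^- = 0$ is precisely the class of bilateral Gamma processes, which closes the equivalence.

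There is no real obstacle here; the only thing to be careful about is that the equivalence genuinely uses the sign constraint $\beta^+,\beta^- \geq 0$ built into the definition of a tempered stable process, so I would state that explicitly rather than leave it implicit.

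\begin{proof}
By the preceding Proposition we have $\gamma(X) = \max\{\beta^+,\beta^-\}$. Since $\beta^+,\beta^- \in [0,1)$, both parameters are nonnegative, and hence $\max\{\beta^+,\beta^-\} = 0$ if and only if $\beta^+ = \beta^- = 0$. As noted in Section~\ref{sec-TS}, the case $\beta^+ = \beta^- = 0$ corresponds exactly to bilateral Gamma processes. Therefore $\gamma(X) = 0$ if and only if $X$ is a bilateral Gamma process.
\end{proof}
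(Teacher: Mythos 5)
Your proof is correct and is exactly the argument the paper intends: the corollary is an immediate consequence of the preceding Proposition, since $\gamma(X)=\max\{\beta^+,\beta^-\}$ and the standing assumption $\beta^+,\beta^-\in[0,1)$ forces the maximum to vanish precisely when $\beta^+=\beta^-=0$, i.e.\ the bilateral Gamma case. Your explicit remark that the nonnegativity of $\beta^{\pm}$ is what makes the equivalence work is a nice touch but does not change the substance.
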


Consequently, we may regard bilateral Gamma processes as the smoothest class of processes among all tempered stable processes.

\section{Exponential stock models driven by tempered stable processes}\label{sec-finance}

In this section, we will deal with exponential stock price models driven by tempered stable processes. For these models, we will be concerned with choices of equivalent martingale measures and option pricing formulas. For the sake of lucidity, we shall only sketch the main results. The proofs and further details will be provided in a subsequent paper.

In the sequel, we fix $\beta^+,\beta^- \in (0,1)$ and a tempered stable process
\begin{align*}
X \sim {\rm TS}(\alpha^+,\beta^+,\lambda^+;\alpha^-,\beta^-,\lambda^-).
\end{align*}
A \emph{tempered stable stock model} is an exponential L\'{e}vy model of the type
\begin{align*}
\left\{
\begin{array}{rcl}
S_t & = & S_0 e^{X_t}
\\ B_t & = & e^{r t}.
\end{array}
\right.
\end{align*}
The process $S$ is a dividend paying stock with deterministic initial value $S_0 > 0$ and dividend rate $q \geq 0$. Furthermore, $B$ is the bank account with interest rate $r \geq 0$. In what follows, we assume that $r \geq q \geq 0$.
An equivalent probability measure $\mathbb{Q} \sim \mathbb{P}$ is a \emph{local martingale measure} (in short, \emph{martingale measure}), if the discounted stock price process
\begin{align*}
\tilde{S}_t := e^{-(r-q)t} S_t = S_0 e^{X_t - (r-q)t}, \quad t \geq 0
\end{align*}
is a local $\mathbb{Q}$-martingale. 
The existence of a martingale measure $\mathbb{Q} \sim \mathbb{P}$ ensures that the stock market is free of arbitrage, and the price of an European option $\Phi(S_T)$, where $T > 0$ is the time of maturity and $\Phi : \mathbb{R} \rightarrow \mathbb{R}$ the payoff profile, is given by
\begin{align*}
\pi = e^{-rT} \mathbb{E}_{\mathbb{Q}}[\Phi(S_T)].
\end{align*}
One method to obtain equivalent probability measures is to perform the Esscher transform, which was pioneered in \cite{Gerber}.

\begin{definition}
Let $\Theta \in (-\lambda^-,\lambda^+)$ be
arbitrary. The \emph{Esscher transform} $\mathbb{P}^{\Theta}$ is
defined as the locally equivalent probability measure with
likelihood process
\begin{align}\label{density-process}
\Lambda_t(\mathbb{P}^{\Theta},\mathbb{P}) := \frac{d
\mathbb{P}^{\Theta}}{d \mathbb{P}}\bigg|_{\mathcal{F}_t} = e^{\Theta
X_t - \Psi(\Theta) t}, \quad t \geq 0,
\end{align}
where $\Psi$ denotes the cumulant generating function given by
(\ref{cumulant}).
\end{definition}

There exists $\Theta \in (-\lambda^-,\lambda^+)$
such that $\mathbb{P}^{\Theta}$ is a martingale measure if and only if
\begin{align}\label{cond-for-Esscher-lambda}
&\lambda^+ + \lambda^- > 1
\\ \label{cond-domain-temp}
\text{and} \quad &r-q \in (f(-\lambda^-),f(\lambda^+ - 1)].
\end{align}
Here $f : [-\lambda^-,\lambda^+ - 1] \rightarrow \mathbb{R}$ denotes the function
\begin{align*}
f(\Theta) := f^+(\Theta) + f^-(\Theta),
\end{align*}
where we have set
\begin{align*}
f^+(\Theta) &:= \alpha^+ \Gamma(-\beta^+) \big[ (\lambda^+ - \Theta -
1)^{\beta^+} - (\lambda^+ - \Theta)^{\beta^+} \big],
\\ f^-(\Theta) &:= \alpha^-
\Gamma(-\beta^-) \big[ (\lambda^- + \Theta + 1)^{\beta^-} - (\lambda^- +
\Theta)^{\beta^-} \big].
\end{align*}
If conditions (\ref{cond-for-Esscher-lambda}) and (\ref{cond-domain-temp}) are satisfied, then $\Theta$ is the unique solution of the equation
\begin{align*}
f(\Theta) = r-q,
\end{align*}
and under the probability measure $\mathbb{P}^{\Theta}$ we have
\begin{align*}
X \sim {\rm TS}(\alpha^+,\beta^+,\lambda^+ - \Theta;\alpha^-,\beta^-,\lambda^- + \Theta). 
\end{align*}
In contrast to the present situation, for bilateral Gamma stock models ($\beta^+ = \beta^- = 0$) condition (\ref{cond-for-Esscher-lambda}) alone is already sufficient for the existence of an Esscher martingale measure, cf. \cite[Remark~4.4]{Kuechler-Tappe-pricing}.

A more sophisticated method to choose an
equivalent martingale measure $\mathbb{Q} \sim \mathbb{P}$ is to minimize the distance
\begin{align*}
\mathbb{E}[ g (\Lambda_1(\mathbb{Q},\mathbb{P})) ]
\end{align*}
for some strictly convex function $g : (0,\infty) \rightarrow
\mathbb{R}$. Here are some popular choices for the function $g$:
\begin{itemize}
\item For $g(x) = x \ln x$ we call $\mathbb{Q}$ the \emph{minimal entropy martingale measure}. 

\item For $g(x) = x^p$ with $p > 1$ we call $\mathbb{Q}$ the \emph{$p$-optimal martingale measure}.

\item For $p = 2$ we call $\mathbb{Q}$ the \emph{variance-optimal martingale measure}.
\end{itemize}
We shall minimize the relative entropy within the class of tempered stable processes by performing \textit{bilateral} Esscher transforms, cf. Definition~\ref{def-bilateral-Esscher}.

\begin{definition}
Let $\theta^+ \in (-\infty,\lambda^+)$ and
$\theta^- \in (-\infty,\lambda^-)$ be arbitrary. The \emph{bilateral
Esscher transform} $\mathbb{P}^{(\theta^+,\theta^-)}$ is defined as
the locally equivalent probability measure with likelihood process
\begin{align*}
\Lambda_t(\mathbb{P}^{(\theta^+,\theta^-)},\mathbb{P}) := \frac{d
\mathbb{P}^{(\theta^+,\theta^-)}}{d
\mathbb{P}}\bigg|_{\mathcal{F}_t} = e^{\theta^+ X_t^+ -
\Psi^+(\theta^+) t} \cdot e^{\theta^- X_t^- - \Psi^-(\theta^-) t},
\quad t \geq 0.
\end{align*}
\end{definition}

Note that the Esscher transforms $\mathbb{P}^{\Theta}$ are special cases of the just introduced bilateral
Esscher transforms $\mathbb{P}^{(\theta^+,\theta^-)}$. Indeed, we have
\begin{align*}
\mathbb{P}^{\Theta} = \mathbb{P}^{(\Theta,-\Theta)}, \quad \Theta
\in (-\lambda^-,\lambda^+).
\end{align*}
It turns out that martingale measures of the form $\mathbb{P}^{(\theta^+,\theta^-)}$ exist if and only if
\begin{align}\label{ineqn-rq-2}
-\alpha^+ \Gamma(-\beta^+) > r-q.
\end{align}
Then there exist $-\infty \leq \theta_1 < \theta_2 \leq \lambda^+ - 1$ and a continuous, strictly increasing, bijective function $\Phi : (\theta_1,\theta_2) \rightarrow (-\infty,\lambda^-)$ such that $\mathbb{P}^{(\theta,\Phi(\theta))}$ is a martingale measure for each $\theta  \in (\theta_1,\theta_2)$. Moreover, under $\mathbb{P}^{(\theta,\Phi(\theta))}$ we have
\begin{align*}
X \sim {\rm TS}(\alpha^+,\beta^+,\lambda^+ - \theta;\alpha^-,\beta^-,\lambda^- - \Phi(\theta)).
\end{align*}
By~Proposition~\ref{prop-TS-eq-measure-change}, all equivalent measure transformations preserving the class of tempered stable processes are bilateral Esscher transforms. Hence, we introduce the set of parameters
\begin{align*}
\mathcal{M}_{\mathbb{P}} := \{ (\theta^+, \theta^-) \in
(-\infty,\lambda^+) \times (-\infty,\lambda^-) \,|\,
\text{$\mathbb{P}^{(\theta^+,\theta^-)}$ is a martingale measure} \}
\end{align*}
such that the bilateral Esscher transform is a martingale measure.
If condition (\ref{ineqn-rq-2}) is satisfied, then we have
\begin{align}\label{set-MM}
\mathcal{M}_{\mathbb{P}} = \{ (\theta,\Phi(\theta)) \in \mathbb{R}^2
\,|\, \theta \in (\theta_1,\theta_2) \},
\end{align}
and within this class there exist parameters minimizing the relative entropy and the $p$-distances. If condition (\ref{ineqn-rq-2}) is not satisfied, then we have $\mathcal{M}_{\mathbb{P}} = \emptyset$. Hence, in contrast to bilateral Gamma stock models, it can happen that there is no equivalent martingale measure under which $X$ remains a tempered stable process. Moreover, in contrast to bilateral Gamma stock models, the function $\Phi$ is not available in closed form, cf. \cite[Remark~6.7]{Kuechler-Tappe-pricing}.

Concerning the existence of the minimal martingale measure, which has been introduced in \cite{FS} with the motivation of constructing optimal hedging strategies, we obtain a similar result as for bilateral Gamma stock models, cf. \cite[Section~7]{Kuechler-Tappe-pricing}. Namely, assuming that $\lambda^+ \geq 2$ and defining the constant
\begin{align}\label{def-c}
c = c(\alpha^+,\alpha^-,\beta^+,\beta^-,\lambda^+,\lambda^-,r,q) := \frac{\Psi(1) - (r-q)}{\Psi(2) - 2\Psi(1)},
\end{align}
the minimal martingale measure $\hat{\mathbb{P}}$ exists if and only if
\begin{align*}
-1 \leq c \leq 0.
\end{align*}
In this case, under the minimal martingale measure $\hat{\mathbb{P}}$ we have
\begin{align*}
X &\sim {\rm TS}((c+1)\alpha^+,\beta^+,\lambda^+;(c+1)\alpha^-,\beta^-,\lambda^-)
\\ &\quad * {\rm TS}(-c \alpha^+,\beta^+,\lambda^+ - 1;-c \alpha^-,\beta^-,\lambda^- + 1).
\end{align*}
Under each of the martingale measures, which we have presented in this section, we can derive option pricing formulas based on Fourier transform techniques. We fix a strike price $K > 0$ and a maturity date $T > 0$. Denoting by $\varphi_{X_T}$ the characteristic function of $X_T$, for $\lambda^+ > 1$ the price of a call option with these parameters is given by
\begin{align*}
\pi = - \frac{e^{-rT} K}{2 \pi} \int_{i \nu - \infty}^{i \nu + \infty} \bigg( \frac{K}{S_0} \bigg)^{iz} \frac{\varphi_{X_T}(-z)}{z(z-i)} dz,
\end{align*}
where $\nu \in (1,\lambda^+)$ is arbitrary. This follows
from \cite[Section~3.1]{Carr-Madan}, see also \cite[Theorem~3.2]{Lewis}.

Summing up the results of this section, we obtain -- compared to stock price models driven by bilateral Gamma processes -- more restrictive conditions concerning the existence of appropriate martingale measures. This is not surprising, as our investigations of this paper show that, in many respects, the properties of bilateral Gamma distributions differ from those of all other tempered stable distributions.

\section*{Acknowledgement}

The authors are grateful to Gerd Christoph for discussions about the optimal constant in the Berry-Esseen theorem.

The authors are also grateful to the Associate Editor and two anonymous reviewers for valuable comments and suggestions.

\end{document}